\theoremstyle{plain}
\newtheorem{theorem}{Theorem}
\newtheorem{proposition}[theorem]{Proposition}
\newtheorem{lemma}[theorem]{Lemma}
\newtheorem{corollary}[theorem]{Corollary}
\newtheorem{example}[theorem]{Example}
\newtheorem{remark}[theorem]{Remark}
\newcommand*{\LRP}[1]{\left(#1\right)}
\newcommand*{\SN}[1]{\left|{#1}\right|}      % Single bar norm
\newcommand{\ONBQSigma}{\mathbf U_{\Qmat}} % ONB for the image of Q Sigma
\newcommand{\ONBPSigma}{\mathbf U_{\Pmat}} % ONB for the image of P Sigma
\providecommand*{\N}[1]{\left\|{#1}\right\|} % Double bar norm
\providecommand*{\textCovv}[1]{\operatorname{Cov}({#1})}   % distance
\providecommand*{\textVar}[1]{\operatorname{Var}({#1})}   % distance
\providecommand*{\textN}[1]{\|{#1}\|} % Double bar norm
\newcommand*{\textSN}[1]{|{#1}|}      % Single bar norm
\providecommand*{\Covv}[1]{\operatorname{Cov}\left({#1}\right)}   % distance
\providecommand*{\Var}[1]{\operatorname{Var}\left({#1}\right)}   % distance
\providecommand{\rank}{\operatorname{rank}}                        % rank
\providecommand{\argmin}{\operatorname*{argmin}}  % argument yielding inf
\providecommand{\tr}{\operatorname{tr}}
\providecommand{\bbP}{\mathbb{P}}
\providecommand{\bbR}{\mathbb{R}}
\providecommand{\bbS}{\mathbb{S}}
\providecommand{\bbE}{\mathbb{E}}
\providecommand{\bbN}{\mathbb{N}}
\providecommand{\msf}{\mathsf{m}}
\providecommand{\CO}{{\cal O}}
\providecommand{\CN}{{\cal N}}
\providecommand{\CM}{{\cal M}}
\providecommand{\CS}{{\cal S}}
\providecommand{\CR}{{\cal R}}
\providecommand{\Diag}[1]{\operatorname{Diag}\left({#1}\right)}
\providecommand*{\dist}[2]{\operatorname{dist}({#1};{#2})}   % distance
\newcommand{\VSigma}{{\mathbf\Sigma}}
\newcommand{\VDelta}{{\mathbf\Delta}}
\providecommand*{\Span}{\operatorname{span}}     % Span
\providecommand{\Id}{\mathbf{Id}}
\newcommand*{\EUSP}[2]{\left<{#1},{#2}\right>} % Euclidean inner product
\renewcommand{\Im}{\operatorname{Im}}             % imaginary part
\newcommand{\Amat}{{\mathbf A}}
\newcommand{\Bmat}{{\mathbf B}}
\newcommand{\Gmat}{{\mathbf G}}
\newcommand{\Pmat}{{\mathbf P}}
\newcommand{\Qmat}{{\mathbf Q}}
\newcommand{\Hmat}{{\mathbf H}}
\newcommand{\Umat}{{\mathbf U}}
\newcommand{\Vmat}{{\mathbf V}}
\newcommand{\Smat}{{\mathbf S}}
\newcommand{\Mmat}{{\mathbf M}}
\newcommand{\tempMatOne}{\Gmat}
\newcommand{\tempMatTwo}{\Hmat}
\newcommand{\centerRV}[1]{\tilde{#1}}
\newcommand{\PrSigma}{\Pmat_{\VSigma}}
\newcommand{\QrSigma}{\Qmat_{\VSigma}}
\newcommand{\Amod}{\Amat_{\VSigma}}
\newcommand{\Bmod}{\Bmat_{\VSigma}}
\newcommand{\bbs}{{\mathbf{{b}}}}
\newcommand{\abs}{{\mathbf{{a}}}}
\newcommand{\dbs}{{\mathbf{{d}}}}
\newcommand{\ubs}{{\mathbf{{u}}}}
\newcommand{\vbs}{{\mathbf{{v}}}}
\newcommand{\rbs}{{\mathbf{{r}}}}
\newcommand{\xbs}{{\mathbf{{x}}}}
\newcommand{\ybs}{{\mathbf{{y}}}}
\newcommand{\Vmu}        {\boldsymbol{\upmu}}
\newcommand{\Vxi}        {\boldsymbol{\upxi}}
\title{
		\usefont{OT1}{bch}{b}{n}
		\huge Estimating covariance and precision matrices along subspaces\\
}
\date{}
\author[1]{\v Zeljko Kereta\thanks{Email: \texttt{zeljko@simula.no}} }
\author[1]{Timo Klock\thanks{Email: \texttt{timo@simula.no}} }
\affil[1]{Simula Research Laboratory, Machine Intelligence Department, Oslo, Norway}
\begin{document}
\maketitle

\begin{abstract}
We study the accuracy of estimating the covariance and the precision matrix of a $D$-variate sub-Gaussian distribution along a prescribed subspace or direction using the finite sample covariance. Our results show that the estimation accuracy depends almost exclusively on the components of the distribution that correspond to desired subspaces or directions. This is relevant and important for problems where the behavior of data along a lower-dimensional space is of specific interest, such as dimension reduction or structured regression problems.  We also show that estimation of precision matrices is almost independent of the condition number of the covariance matrix. The presented applications include direction-sensitive eigenspace perturbation bounds, relative bounds for the smallest eigenvalue, and the estimation of the single-index model. For the latter, a new estimator, derived from the analysis, with strong theoretical guarantees and superior numerical performance is proposed.
\end{abstract}

{\noindent\small{\textbf{Keywords:} covariance matrix, precision matrix, finite sample bounds, dimension reduction, rate of convergence, ordinary least squares, single-index model, precision matrix} }

%%%%%%%%%%%%%%%%%%%%%%%%%%%%%%%%%%%%%%%%%
%%% Introduction
%%%%%%%%%%%%%%%%%%%%%%%%%%%%%%%%%%%%%%%%%
\section{Introduction}
\label{sec:introduction}
Estimating the covariance $\VSigma=\bbE(X-\bbE X)(X-\bbE X)^\top$ and the precision matrix $\VSigma^\dagger$ of a random vector $X \in \bbR^D$ is a
standard and long standing problem in multivariate statistics with applications in a number of mathematical and applied fields.
Notable examples include any form of dimension reduction, such as principal component analysis, nonlinear dimension reduction,
manifold learning, but also problems ranging from classification, regression, and signal processing to econometrics, brain imaging and social networks.

Given independent copies $X_1,\ldots,X_N$ of $X$, the most widely used estimator is the sample covariance $\hat \VSigma := \frac{1}{N}\sum_{i=1}^{N}X_i X_i^\top$, and the inverse thereof.
The crucial question in estimating covariance and precision matrices is to quantify the minimal number of samples $N$ ensuring that for a desired accuracy $\varepsilon > 0$, and a confidence level $u > 0$, we have
\begin{align}
\label{eq:covariance_estimation_bound}
\big\|{\hat \VSigma - \VSigma}\big\|_2 \leq \varepsilon S_\VSigma(X),\quad
\textrm{respectively},\quad
\big\|{\hat \VSigma^{\dagger} - \VSigma^{\dagger}}\big\|_2 \leq \varepsilon S_{\VSigma^\dagger}(X),
\end{align}
with probability at least $1-\exp(-u)$.
Constants $S_\VSigma(X)$ and $S_{\VSigma^\dagger}(X)$ in \eqref{eq:covariance_estimation_bound}
describe the dependence of the error with respect to
the distribution of $X$ and properties of $\VSigma$, respectively $\VSigma^\dagger$.

In practice however, we are often not directly interested in matrices $\VSigma$ or $\VSigma^{\dagger}$ themselves, but rather in
how they, as (bi)-linear operators, act on specific vectors or matrices.
In terms of concentration inequalities, this can be interpreted as developing bounds for
$\Amat(\hat \VSigma - \VSigma)\Bmat^\top$ and $\Amat(\hat \VSigma^{\dagger} - \VSigma^{\dagger})\Bmat^\top$, where $\Amat$ and $\Bmat$ are a given pair of (rectangular) matrices.
Bounds of this type in case of sub-Gaussian distributions are the principal subjects of this work.

For any submultiplicative matrix norm $\textN{\cdot}$  perturbations $\textN{\Amat(\hat \VSigma - \VSigma)\Bmat^\top}$ and $\textN{\Amat(\hat \VSigma^{\dagger} - \VSigma^{\dagger})\Bmat^\top}$
are bounded by $\textN{\Amat}\textN{\hat \VSigma - \VSigma}\textN{\Bmat}$ and $\textN{\Amat}\textN{\hat \VSigma^{\dagger} - \VSigma^{\dagger}}\textN{\Bmat}$.
This suggests that standard bounds for $\textN{\hat \VSigma - \VSigma}$ and $\textN{\hat \VSigma^{\dagger} - \VSigma^{\dagger}}$  (see the overviews in Sections \ref{subsec:state_of_the_art_covariance_estimation} and \ref{subsec:soa_precision_matrix}) suffice so long as the distribution is nearly isotropic, i.e. $\VSigma \approx \Id_D$.
However, many modern data analysis tasks explicitly rely on anisotropic distributions
because different spectral modalities of the covariance matrix provide crucial, and complementary, information about
the task at hand.
In this case, using norm submultiplicativity and standard bounds for
$\textN{\hat \VSigma - \VSigma}$ and $\textN{\hat \VSigma^{\dagger} - \VSigma^{\dagger}}$ overestimates incurred errors
because it decouples $\Amat$ and $\Bmat$ from their effect on covariance and precision matrices.
Thus, such bounds cannot capture the true behavior of the estimation error.

A typical example that leverages different modalities of (conditional) covariance matrices
are problems that analyze the structure of point clouds, such as manifold learning.
This is because such methods are often prefaced by a linearization step, where the globally non-linear geometry is locally approximated by tangential spaces.
In such a case the conditional covariance of localized data points is anisotropic, i.e. eigenvalues in tangential directions are notably larger than those in non-tangential directions, and the degree of anistropicity increases as the data is more localized \cite{little15multiscaleSVD}, facilitating the linearization.

Anisotropic distributions also play an important role in high-dimensional nonparametric regression problems
that use structural assumptions. Denoting $Y$ as the dependent output variable,
a popular example is the multi-index model $\bbE[Y|X=\xbs] = g(\Amat^\top \xbs)$, for a matrix
$\Amat$ with $\rank(\Amat) \ll D$.
The complexity of the underlying nonparametric regression problem can be significantly
reduced by first identifying $\Im(\Amat)$ and then performing nonparametric regression in
$\bbR^{\rank(\Amat)}$. Many methods in the structural dimension reduction literature \cite{adragni2009sufficient,ma2013review}
propose estimators for $\Amat$ that compare the global covariance matrix $\VSigma$ (typically assumed to be isotropic),
with conditional covariance matrices $\Covv{X|Y \in \CR}$.
Here $\CR \subset \Im(Y)$ represents a connected level set of the output.
The rationale behind this approach
is that conditioning breaks the isotropicity and induces different spectral modalities
with respect to directions belonging to $\Im(\Amat)$ and its orthogonal complement. This is
then leveraged to identify $\Im(\Amat)$ \cite{dennis2000save,li2007directional}.

We showcase the usability of bounds for $\Amat(\hat \VSigma^{\dagger} - \VSigma^{\dagger})\Bmat^\top$
on a concrete example in Section \ref{sec:application} by analyzing the ordinary
least squares estimator $\VSigma^{\dagger}\Covv{X,Y}$ as an estimator for the index
vector $\abs$ in the single-index model $\bbE[Y|X=\xbs] = g(\abs^\top \xbs)$.
Our analysis extends studies \cite{brillinger1982generalized, balabdaoui2019score} and shows how is the accuracy of the estimator be affected
by anisotropicity of $X$. Furthermore, an examination of developed estimation bounds motivates a modification of the ordinary least squares estimator
via an approach based on conditioning and averaging in the spirit of structural dimension reduction techniques \cite{adragni2009sufficient,ma2013review}.
We validate the superiority of this modified estimator theoretically and
numerically, and thereby show how a careful tracking of different modalities of the distribution $X$ helps to develop improved
 methods for common data analysis tasks.

Bounds developed in this work also have a few immediate corollaries, which
might be of independent interest. These include eigenspace perturbation bounds similar to
\cite[Theorem 1]{yu2014useful}, but which are sensitive to the behavior of $X$
in the direction corresponding to the eigenspace of interest, and a relative bound
for the smallest eigenvalue of $\hat \VSigma$ comparable to \cite[Theorem 2.2]{yaskov2014lower}, but without
the isotropicity assumption.

\subsection{State of the art: covariance matrix estimation}
\label{subsec:state_of_the_art_covariance_estimation}
The most common bounds for estimating the covariance matrix from finitely many observations
consider sub-Gaussian \cite{vershynin2018high,vershynin2012close} and bounded \cite{rosasco2010learningwithintegraloperator} random vectors. They state that with probability at least $1-\exp(-u)$
\begin{align}
\label{eq:subGaussian_X}
&\textN{\hat \VSigma - \VSigma}_2 \lesssim  \textN{X}_{\psi_2}^2\LRP{\sqrt{\frac{D+u}{N}}\vee\frac{D+u}{N}},\,\\
\label{eq:bounded_X}
&\textN{\hat \VSigma - \VSigma}_F   \lesssim  C_X^2\sqrt{\frac{u}{N}},\, \, \textrm{ provided }  \textN{X}_2 \leq C_X\ a.s,
\end{align}
where $A \lesssim B$ means $A\leq CB$ for some universal constant $C$.
Besides these two cases, researchers have over the years investigated minimal
moment- or tail conditions on $X$ such that bounds similar to \eqref{eq:subGaussian_X}
can be achieved. We refer to papers \cite{vershynin2012close,srivastava2013covariance,adamczak2010quantitative,vershynin2010introduction}
that consider more general classes of distributions.
The most general setting we are aware of is \cite{srivastava2013covariance}
that considers distributions which for universal $C, \eta > 0$ satisfy the tail condition
\begin{align*}
\bbP\big(\N{\Pmat X}_2^2 > t\big)\leq Ct^{-1-\eta},\quad \textrm{for } t > C\rank(\Pmat),
\end{align*}
for every orthogonal projection $\Pmat$.
Distributions satisfying this condition include log-concave random variables (e.g. uniform distributions on convex sets)
and product distributions, where the marginals have uniformly bounded $4+s$ moments
for some $s > 0$.

Our bounds show that the sample covariance estimator $\hat \VSigma$ automatically and implicitly
adapts to $\rank(\hat \VSigma)$, which serves as a complexity parameter of the estimation problem.
However, in the regime $N<\rank(\VSigma)$ the sample covariance is rank deficient and the estimation is in general 	not possible.
Instead, structural assumptions, such as sparsity or bandedness, are needed to reduce the effective complexity of the problem and allow consistent estimation.
These assumptions can be leveraged by regularized estimation techniques, which include banding \cite{BickelLevinaBanding}, thresholding \cite{cai2016estimating,BickelLevinaThresholding}, or penalized likelihood estimation \cite{PenalisedLikelihoodCovs}.
We refer to \cite{fan2016overview,cai2016estimating} for detailed reviews of existing methods.

We also mention \cite{koltchinskii2017concentration} that considers
Gaussian random variables taking values in general Banach spaces, and \cite{koltchinskii2017normal} which
analyze the concentration of spectral projectors of the covariance matrix, and (bi)-linear forms
thereof, for Gaussian random variables in a separable Hilbert space. This is conceptually related
to our work as the bounds take into account information about relevant spectral projectors, and their
interplay with vectors to which they are applied. This has recently been extended to the
estimation of smooth functionals of covariance operators \cite{koltchinskii2017asymptotically, koltchinskii2020efficient}
for Gaussians in separable Hilbert spaces.

\subsection{State of the art: precision matrix estimation}
\label{subsec:soa_precision_matrix}
Estimation of the precision matrix is relevant for many problems, ranging from simple tasks
such as data transformations (e.g. standardization $Z := \VSigma^{-1/2}X$), to  applications
that include linear discriminant analysis, graphical modeling, or
complex data visualization.
Furthermore,  precision matrix encodes information about partial correlations of features of $X$. Namely, if $X$ follows a Gaussian (or paranormal)
distribution, the $ij$-th entry of $\VSigma^{\dagger}$ is zero if the $i$-th and the $j$-th
feature are conditionally independent.

The inverse $\hat\VSigma^\dagger$ of the sample covariance, constructed from $N$ independent copies of a mean zero random vector $X \in \bbR^D$, is a well-behaved estimator of $\VSigma^\dagger$ as $N\rightarrow\infty$
and $D$ is considered fixed \cite{AndersonStatAn}.
In such a case bounds for the precision matrix can be obtained by using general perturbation bounds for the Moore-Penrose inverse.
One of the first such bounds \cite{wedin1973perturbation} states that for $\tempMatOne \in \bbR^{d_1 \times d_2}$, and an additively perturbed matrix $\tempMatTwo = \tempMatOne + \VDelta$, we have
\begin{align}
\label{eq:Wedins_MP_bound}
\textN{\tempMatTwo^{\dagger} - \tempMatOne^{\dagger}} &\leq \omega \max\big\{\textN{\tempMatOne^{\dagger}}_2^2,\textN{\tempMatTwo^{\dagger}}_2^2\big\}\textN{\VDelta},\text{ and }\\
\label{eq:Wedins_MP_bound_rank_equality}
\textN{\tempMatTwo^{\dagger} - \tempMatOne^{\dagger}} &\leq \omega \textN{\tempMatOne^{\dagger}}_2\textN{\tempMatTwo^{\dagger}}_2\textN{\VDelta},\quad \textrm{if } \rank(\tempMatOne) = \rank(\tempMatTwo),
\end{align}
where $\textN{\cdot}$ is any unitarily invariant norm, and $\omega$ is a small universal constant \cite{meng2010optimal}.
Recent studies \cite{li2018some,xu2018perturbation} examine the influence of the
perturbation in greater detail, implying  the bound
\begin{align*}
\textN{\tempMatTwo^{\dagger} - \tempMatOne^{\dagger}}_F &\leq \min\left\{\textN{\tempMatTwo^{\dagger}}_2 \textN{\tempMatOne^{\dagger}\VDelta}_F, \textN{\tempMatOne^{\dagger}}_2\textN{\tempMatTwo^{\dagger}\VDelta}_2\right\},\\ &\textrm{if } \rank(\tempMatOne) = \rank(\tempMatTwo) = \min\{d_1,d_2\}.
\end{align*}
Using these general perturbation bounds it is easy to derive first concentration bounds for the precision matrix.
For example, assume $X$ is sub-Gaussian and that the number of independent data samples satisfies $N =C4^{k}(D+u)\N{X}_{\psi_2}^4/\lambda_{\rank(\VSigma)}(\VSigma)^2$ for some universal $C>0$ and $k\in\bbN$.
Equation \eqref{eq:subGaussian_X} and Weyl's bound \cite{weyl1912asymptotische} imply
$$\lambda_{\rank(\VSigma)}(\hat \VSigma) \geq \lambda_{\rank(\VSigma)}(\VSigma) - \textN{\hat \VSigma - \VSigma}_2 \geq \lambda_{\rank(\VSigma)}(\VSigma)(1-2^{-k}),$$
and consequently $\textN{\hat \VSigma^{\dagger}}_2 \leq (1-2^{-k})^{-1}\textN{\VSigma^{\dagger}}_2$.
The perturbation bound \eqref{eq:Wedins_MP_bound_rank_equality}
and covariance bound \eqref{eq:subGaussian_X} give with probability $1-\exp(-u)$
\begin{align}
\label{eq:generalized_inverse_bound}
\textN{\hat \VSigma^{\dagger}-\VSigma^{\dagger}}_2 \lesssim \textN{\VSigma^{\dagger}}_2^2 \textN{\hat \VSigma - \VSigma}_2 \lesssim \textN{\VSigma^\dagger}_2^2 \N{X}_{\psi_2}^2\sqrt{\frac{D+u}{N}},
\end{align}
where the higher order term in \eqref{eq:subGaussian_X} vanishes in the applicable regime
$N \geq C(D+u)\N{X}_{\psi_2}^4/\lambda_{\rank(\VSigma)}^2(\VSigma)$.
While this effectively provides bounds as soon as the covariance
perturbation is bounded, in this work we show that \eqref{eq:generalized_inverse_bound} overestimates the error by assigning a quadratic scaling of $\textN{\VSigma^\dagger}_2$
(Corollary \ref{cor:condition_free_precision_matrix_estimation}).

Moreover, we are not aware of precision matrix bounds that take into account the specific nature of
the perturbation
nor of bounds for
$\textN{\Amat(\tempMatTwo^\dagger - \tempMatOne^\dagger)\Bmat}_2$ that are sensitive to objects of interest.

If $\rank(\VSigma)$ grows with $N$, then $\hat\VSigma$ is not a consistent estimator
of $\VSigma$ and thus the precision matrix cannot be estimated well by inverting the sample covariance matrix.
Various families of structured precision
matrices have been studied to mitigate these issues, motivated by applications in genomics, finance, and other fields.
Dominant assumptions are sparsity and bandedness, which are exploited
through the use of regularized estimators. Algorithms for estimating $\VSigma^{\dagger}$ based on
regularization include computing $\VSigma^\dagger$ column by column through entry-wise Lasso
\cite{meinshausenGLASSO,FHTGLasso}, constrained $\ell_1$ minimization \cite{TCaiCLIME},
 adaptive $\ell_1$ minimization \cite{ACLIME}, $\ell_1$ regularized score matching
 \cite{FASP}, or ridge regressors \cite{WesselRIDGE}. See \cite{fan2016overview,cai2016estimating}
 for comprehensive overviews.

\subsection{Overview and contributions}
\label{subsec:contributions}
Throughout, $X \in \bbR^D$ is a sub-Gaussian random vector with $\centerRV{X} := X - \bbE X$ and $\VSigma = \Covv{X}$,
and $X_1,\ldots,X_N$ are independent copies of $X$.
Sub-Gaussians are a broad class of light-tailed distributions that have received increasing attention in recent years and are used in many branches of probability and statistics.
We define finite sample estimators of $\bbE X$ and $\VSigma$ by
$\hat \Vmu_X := N^{-1}\sum_{i=1}^{N}X_i$ and $\hat \VSigma := N^{-1}\sum_{i=1}^{N}(X_i-\hat \Vmu_X) (X_i-\hat \Vmu_X)^\top$.

Let $\Amat \in \bbR^{d_1\times D}$, $\Bmat \in \bbR^{d_2\times D}$ be matrices determining a direction, subspace, or generally an object of interest.
We can summarize our findings as follows.
\begin{enumerate}[label = (\arabic*)]
\item In Section \ref{sec:covariance_estimation} we show that with probability at least $1-\exp(-u)$
\begin{align}
\label{eq:contributions_covariance_matrix_bound_P}
\textN{\Amat(\hat \VSigma - \VSigma)\Bmat^\top}_2 \lesssim\textN{\Amat\centerRV{X}}_{\psi_2}\textN{\Bmat \centerRV{X}}_{\psi_2}\msf\left(\sqrt{\frac{d_A + d_B +u}{N}}\right),
\end{align}
where $\msf(t) = t \vee t^2$, $d_A := \rank(\Amat\VSigma)$, and $d_B := \rank(\Bmat\VSigma)$.
This is similar to \cite[Proposition 2.1]{vershynin2012close} but replaces the sub-Gaussian norm $\textN{\centerRV{X}}_{\psi_2}$
by direction/subspace dependent quantities $\| \Amat \centerRV{X}\|_{\psi_2}$ and $\| \Bmat \centerRV{X}\|_{\psi_2}$.
\item In Section \ref{sec:precision_matrix_estimation} we show  that with probability at least $1-\exp(-u)$,
we have
\begin{align}
\label{eq:contributions_precision_matrix_bound_P}
\textN{\Amat(\hat \VSigma^{\dagger} - \VSigma^{\dagger})\Bmat^\top}_2 \lesssim \textN{\Amat\VSigma^{\dagger}\centerRV{X}}_{\psi_2}\textN{\Bmat\VSigma^{\dagger}\centerRV{X}}_{\psi_2}\sqrt{\frac{\rank(\VSigma)+u}{N}},
\end{align}
provided $N \geq C \textN{\sqrt{\VSigma^{\dagger}}\centerRV{X}}_{\psi_2}^4$, for an absolute constant $C>0$,
where $\sqrt{\VSigma^{\dagger}}\centerRV{X}$ is the standardization of a random variable $X$, with $\textCovv{\sqrt{\VSigma^{\dagger}}\centerRV{X}} = \Id_D$.
\item In Section \ref{sec:precision_matrix_estimation} we show stronger bounds for \eqref{eq:contributions_covariance_matrix_bound_P} and \eqref{eq:contributions_precision_matrix_bound_P} in case of bounded random vectors.
\end{enumerate}

\begin{remark}
\label{rem:initial_remark}
As we point out in Corollary \ref{cor:condition_free_precision_matrix_estimation},
the bound \eqref{eq:contributions_precision_matrix_bound_P} is  interesting even
when $\Amat = \Bmat = \Id_D$, particularly in view of  general perturbation
bounds such as \eqref{eq:generalized_inverse_bound}.
Consider for example a random vector $X$ for which the sub-Gaussian norm is a good proxy for the variance, i.e.
so that $\| \VSigma^{\dagger}\centerRV{X}\|_{\psi_2} \approx \sqrt{\| \VSigma^{\dagger}\|_2}$ holds.
(This is the case for example if $X \sim \CN(\Vmu,\VSigma)$, or more generally for strict sub-Gaussians \cite{arbel2019strictsubgaussian}).
The right hand side of \eqref{eq:contributions_precision_matrix_bound_P} then scales linearly
in $\| \VSigma^{\dagger}\|_2$, whereas \eqref{eq:generalized_inverse_bound}
shows a quadratic behavior. This has a significant impact for ill-conditioned
covariance matrices and implies that inverting the sample covariance exhibits better conditioning than inverting a general matrix perturbation.
The same effect is observed if $\Amat$ and $\Bmat$ are arbitrary.
\end{remark}

Two applications of bounds \eqref{eq:contributions_covariance_matrix_bound_P} and \eqref{eq:contributions_precision_matrix_bound_P}
are presented. In Section \ref{sec:covariance_estimation} we use the covariance bound
\eqref{eq:contributions_covariance_matrix_bound_P} to establish a bound for perturbations of eigenspaces of the covariance matrix that is sensitive to the distribution of the random vector in the eigenspace of interest.
This is relevant for example when estimating manifolds from unlabeled point cloud data,
see \cite{mitra2003estimating,ouyang2005normal,klasing2009comparison}.

In Section \ref{subsec:ordinary_SIM} we use
\eqref{eq:contributions_precision_matrix_bound_P} to establish sharp concentration bounds for single-index model estimation.
In this model a response $Y \in \bbR$ is assumed to follow the regression model
$\bbE[Y|X] = f(\abs^\top X)$, and the task is to estimate the unknown vector $\abs$ using a finite data set $\{(X_i,Y_i):i=1,\ldots,N\}$.
A common estimator is the normalized ordinary least squares vector, for which we provide
direction-sensitive concentration bounds.
Furthermore, our analysis yields an insight into how the estimator can be improved by a simple and straightforward procedure based on conditioning and averaging. This is presented in Section \ref{subsec:conditional_SIM}.

Most proofs are deferred to the Appendix for the sake of brevity and clarity.
\subsection{General notation}
\label{subsec:notation}
We denote $[M] = \{1,\ldots,M\}$, $a \wedge b = \min\{a,b\} $, $a \vee b = \max\{a,b\}$,
and we may use the auxiliary function $\msf(t) = t \vee t^2$.
For a real, symmetric matrix $\Amat \in \bbR^{d\times d}$ we denote by $\lambda_1(\Amat) \geq \ldots\geq \lambda_d(\Amat)$
the ordered set of its eigenvalues, and by $\ubs_1(\Amat),\ldots,\ubs_d(\Amat)$ the corresponding eigenvectors.
$\textN{\cdot}_2$ denotes the spectral norm of a matrix, and the Euclidean norm of a vector, and $\EUSP{\cdot}{\cdot}$ is the dot product.
$\textN{\cdot}_F$ is the Frobenius norm. $\Id_D\in\bbR^{D\times D}$ is the identity matrix.
$\bbS^{D-1}$ is the unit sphere in $\bbR^D$. For any random vector $X$
we write $\centerRV{X} := X - \bbE X$. For $p \geq 1$ and a random variable $X$ we define the Orlicz norm
\[
\textN{X}_{\psi_p} := \inf\{s > 0 : \bbE\exp(\SN{X/s}^p)\leq 2\}.
\]
The definition extends to random vectors $X \in \bbR^D$ by
\begin{equation}
\label{eqn:subG_vect_norm}
\textN{X}_{\psi_p} := \sup_{\vbs \in \bbS^{D-1}}\textN{\vbs^\top X}_{\psi_p} < \infty.
\end{equation}
We only use $p=1$ (sub-exponential) and $p=2$ (sub-Gaussian).
If $\Omega$ is a finite set, $\SN{\Omega}$ denotes its cardinality. If $\Omega$
is an interval, $\SN{\Omega}$ denotes its length.
Throughout the paper $C$ is a placeholder for a positive universal constant that may have a different value on each occurrence, even in the same line of text.
Furthermore, we sometimes use $A\lesssim B$ instead of $A\leq CB$.

%%%%%%%%%%%%%%%%%%%%%%%%%%%%%%%%%
%			Section 2			%
%%%%%%%%%%%%%%%%%%%%%%%%%%%%%%%%%
\section{Covariance matrix estimation}
\label{sec:covariance_estimation}
In this section we present bounds for covariance  and eigenspace estimation that are sensitive to the distribution of a given random vector in directions of interest.
The following matrix concentration bound is the fundamental tool of our analysis.
\begin{lemma}
\label{lem:directed_covariance_estimation}
Let $\Amat \in \bbR^{d_1 \times D}, \Bmat\in \bbR^{d_2 \times D}$ and
define $d_A=\rank(\Amat\VSigma)$, and $d_B=\rank(\Bmat\VSigma)$.
Then for all $u>0$, with probability at least $1 - \exp(-u)$ we have
for $\msf(t) = t \vee t^2$
\begin{equation}
\label{eq:directed_covariance_estimation_subGaussian}
\textN{\Amat(\hat \VSigma - \VSigma)\Bmat^\top\!}_2 \lesssim\!\textN{\Amat\centerRV{X}}_{\psi_2}\textN{\Bmat\centerRV{X}}_{\psi_2}\msf\left(\sqrt{\frac{d_A+d_B+u}{N}}\right).
\end{equation}
\end{lemma}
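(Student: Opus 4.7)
The plan is to bound $\|\Amat(\hat\VSigma-\VSigma)\Bmat^\top\|_2$ by reducing to scalar bilinear forms, controlling each via Bernstein's inequality for products of centered sub-Gaussians, and taking a union bound over epsilon-nets whose cardinality is governed by $d_A+d_B$ rather than the ambient $d_1+d_2$. This rank-aware net size is what produces the direction-sensitive scaling.

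First I would perform a rank reduction. Writing
\[
\hat\VSigma-\VSigma=\frac{1}{N}\sum_{i=1}^{N}\bigl(\centerRV{X}_i\centerRV{X}_i^\top-\VSigma\bigr)-\bar{\centerRV{X}}\bar{\centerRV{X}}^\top,\qquad \bar{\centerRV{X}}:=\hat\Vmu_X-\bbE X,
\]
and using that each $\centerRV{X}_i$ and $\VSigma$ are supported on $\Im(\VSigma)$ almost surely, I obtain $\Amat(\hat\VSigma-\VSigma)\Bmat^\top=\Amat\PrSigma(\hat\VSigma-\VSigma)\PrSigma\Bmat^\top$, where $\PrSigma$ is the orthogonal projector onto $\Im(\VSigma)$. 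Let $\Umat_A\in\bbR^{d_1\times d_A}$ and $\Umat_B\in\bbR^{d_2\times d_B}$ be orthonormal bases of $\Im(\Amat\PrSigma)=\Im(\Amat\VSigma)$ and $\Im(\Bmat\PrSigma)=\Im(\Bmat\VSigma)$. Then
\[
\|\Amat(\hat\VSigma-\VSigma)\Bmat^\top\|_2=\sup_{\xbs\in\bbS^{d_A-1},\,\ybs\in\bbS^{d_B-1}}\xbs^\top\Umat_A^\top\Amat(\hat\VSigma-\VSigma)\Bmat^\top\Umat_B\ybs,
\]
and the usual $1/4$-net argument replaces this supremum by a maximum over nets of cardinality at most $9^{d_A+d_B}$, at the cost of a constant factor in the bound.

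For fixed net points $\xbs,\ybs$, set $\abs:=\Amat^\top\Umat_A\xbs$ and $\bbs:=\Bmat^\top\Umat_B\ybs$. The inner quantity expands as
\[
\frac{1}{N}\sum_{i=1}^{N}\bigl((\abs^\top\centerRV{X}_i)(\bbs^\top\centerRV{X}_i)-\bbE[(\abs^\top\centerRV{X})(\bbs^\top\centerRV{X})]\bigr)-(\abs^\top\bar{\centerRV{X}})(\bbs^\top\bar{\centerRV{X}}).
\]
The centered summands $Z_i$ are sub-exponential with $\|Z_i\|_{\psi_1}\lesssim K:=\|\Amat\centerRV{X}\|_{\psi_2}\|\Bmat\centerRV{X}\|_{\psi_2}$, because $\|\abs^\top\centerRV{X}\|_{\psi_2}=\|(\Umat_A\xbs)^\top\Amat\centerRV{X}\|_{\psi_2}\le\|\Amat\centerRV{X}\|_{\psi_2}$ by \eqref{eqn:subG_vect_norm} (since $\Umat_A\xbs$ is a unit vector in $\bbR^{d_1}$), and analogously for $\bbs$. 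Bernstein's inequality then yields
\[
\bbP\!\left(\left|\tfrac{1}{N}\textstyle\sum_i Z_i\right|>t\right)\leq 2\exp\!\left(-cN\min\{t^2/K^2,\,t/K\}\right),
\]
and choosing $t=CK\,\msf(\sqrt{(d_A+d_B+u)/N})$, together with the union bound over the $9^{d_A+d_B}$ net points, absorbs the $d_A+d_B$ factor into the exponent and produces the desired tail for the main term.

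It remains to handle the mean-correction $(\abs^\top\bar{\centerRV{X}})(\bbs^\top\bar{\centerRV{X}})$. Since $N\bar{\centerRV{X}}$ is a sum of independent centered sub-Gaussians, a standard one-sided concentration bound gives $|\abs^\top\bar{\centerRV{X}}|\lesssim\|\Amat\centerRV{X}\|_{\psi_2}\sqrt{(d_A+u)/N}$ uniformly over $\xbs$ in the net (and similarly for $\bbs$), so the resulting product is dominated by the main term in both regimes of $\msf$. I expect the main obstacle to be the first step: justifying the rank reduction carefully, in particular checking that $\bar{\centerRV{X}}\bar{\centerRV{X}}^\top$ is supported on $\Im(\VSigma)$ so that the entire perturbation factors through $\PrSigma$ almost surely. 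This is what enables the replacement of $d_1+d_2$ by $d_A+d_B$ in the exponent and yields the direction-sensitive improvement over \cite[Proposition 2.1]{vershynin2012close}; everything downstream is routine Bernstein and covering-number manipulation.
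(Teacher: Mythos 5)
Your proposal is correct and follows essentially the same route as the paper: reduce the operator norm to a bilinear form over the lower-dimensional spheres $\bbS^{d_A-1}\times\bbS^{d_B-1}$ by observing that $\hat\VSigma-\VSigma$ factors through $\PrSigma$ almost surely, cover those spheres, apply Bernstein to the sub-exponential products $(\abs^\top\centerRV{X}_i)(\bbs^\top\centerRV{X}_i)$, and absorb the mean-correction term $\bar{\centerRV{X}}\bar{\centerRV{X}}^\top$ as a higher-order contribution. The only cosmetic difference is that the paper packages the scalar Bernstein step into a separate mean-estimation lemma, while you invoke it directly; the rank-reduction step you flag as the main obstacle is indeed the load-bearing observation and you have justified it correctly (each $\centerRV{X}_i$, hence also $\bar{\centerRV{X}}$, lies in $\Im(\VSigma)$ almost surely).
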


\begin{remark}\label{rem:bounded}
An analogous result holds for almost surely bounded random vectors by using a different concentration inequality.
In that case $\textN{\cdot}_{\psi_2}$ can be replaced by a bound for the Euclidean norm of $X$, and the dimensionality
does not appear in the requirement on $N$. We will return to this point at the end of Section \ref{sec:precision_matrix_estimation}.
\end{remark}

The proof of Lemma \ref{lem:directed_covariance_estimation} by and large follows along the lines of traditional concentration results.
Indeed, if $\Amat=\Bmat$, the result would follow by applying \cite[Proposition 2.1]{vershynin2012close} to the random vector $\Amat X$, along with some minor adjustments to account for the ranks.
When $\Amat\neq\Bmat$, a somewhat more careful tracking of the behavior of the random vector $X$, with respect directions induced by matrices $\Amat$ and $\Bmat$ is needed.
In the end, as \eqref{eq:directed_covariance_estimation_subGaussian} suggests, the payoff is that the error rate scales only with components of $X$ along those directions.

\begin{remark}
Some works that consider concentration inequalities for sub-Gaussian random variables do so with respect to the \emph{effective rank}, defined as $r(\VSigma) = \tr{(\VSigma)}/\N{\VSigma}_2$, see for instance \cite{koltchinskii2017asymptotically} or \cite[Remark 5.6.3]{vershynin2018high}.
Effective rank cannot exceed the true rank of a matrix, and unlike $\rank(\VSigma)$, it is less affected by small eigenvalues.
It can thus be a useful surrogate for approximately low dimensional distributions, and can in those cases be used to provide informative estimation bounds
even if $N \ll \rank(\VSigma)$.

On the other hand, $r(\VSigma)$ is not as useful for precision matrix estimation, since inverting a matrix reverses the ordering of the eigenvalues. Thus, $r(\VSigma)$ should be replaced with $r(\VSigma^\dagger)$.
Furthermore, our current proof technique for precision matrix estimation
requires $\Im(\hat \VSigma) = \Im(\VSigma)$, which immediately implies $N \geq \rank(\VSigma)$.
To provide a unified framework, we decided to abstain from using the effective rank in our results.

\end{remark}

Applying Lemma \ref{lem:directed_covariance_estimation} we can reconstruct known error rates in case of low-rank  distributions in high-dimensional ambient spaces.
\begin{corollary}\label{cor:low_rank}
For all $u>0$  with probability at least $1 - \exp(-u)$ we have
\begin{equation}
\label{eq:low_rank_estimation}
\textN{\hat \VSigma - \VSigma}_2 \lesssim \textN{\centerRV{X}}_{\psi_2}^2 \msf\LRP{\sqrt{\frac{\rank(\VSigma)+u}{N}}},\quad \textrm{where } \msf(t) = t \vee t^2.
\end{equation}
\end{corollary}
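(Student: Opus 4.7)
The plan is to obtain Corollary \ref{cor:low_rank} as an immediate specialization of Lemma \ref{lem:directed_covariance_estimation} with $\Amat = \Bmat = \Id_D$. With this choice the left-hand side of \eqref{eq:directed_covariance_estimation_subGaussian} becomes exactly $\textN{\hat \VSigma - \VSigma}_2$, and the directional sub-Gaussian norms $\textN{\Amat\centerRV{X}}_{\psi_2}$ and $\textN{\Bmat\centerRV{X}}_{\psi_2}$ both reduce to $\textN{\centerRV{X}}_{\psi_2}$, producing the squared factor that appears in \eqref{eq:low_rank_estimation}.

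Next, I would verify that $d_A = d_B = \rank(\VSigma)$ for this choice. Since $\Amat\VSigma = \Id_D\cdot\VSigma = \VSigma$, we have $d_A = \rank(\VSigma)$, and analogously $d_B = \rank(\VSigma)$. Plugging these into the bound from the lemma yields
\begin{equation*}
\textN{\hat \VSigma - \VSigma}_2 \lesssim \textN{\centerRV{X}}_{\psi_2}^2 \,\msf\!\LRP{\sqrt{\frac{2\rank(\VSigma)+u}{N}}}.
\end{equation*}
The only remaining cosmetic step is to replace $2\rank(\VSigma)+u$ by $\rank(\VSigma)+u$. Since $\sqrt{2\rank(\VSigma)+u} \leq \sqrt{2}\sqrt{\rank(\VSigma)+u}$, and since $\msf(t) = t \vee t^2$ satisfies $\msf(ct) \leq c^2\msf(t)$ for any $c \geq 1$, this factor is absorbed into the universal constant hidden in $\lesssim$, delivering exactly \eqref{eq:low_rank_estimation}.

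There is no genuine obstacle here; the corollary is a direct corollary of the lemma, and all the work is in the lemma itself. It is worth emphasizing only one conceptual point: the bound depends on $\rank(\VSigma)$ rather than the ambient dimension $D$. This is because the lemma's rate scales with $\rank(\Amat\VSigma)$ and $\rank(\Bmat\VSigma)$ rather than with the row dimensions of $\Amat$ and $\Bmat$, so a degenerate covariance automatically yields a rate governed by its true rank. I would also add a brief remark that the statement is trivially valid for any choice of $\Amat, \Bmat$ whose column span contains $\Im(\VSigma)$, but for the corollary the identity specialization suffices.
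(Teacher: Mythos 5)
Your proposal is correct and matches the paper's (implicit) approach: the paper states the corollary immediately after Lemma~\ref{lem:directed_covariance_estimation} as a direct application with $\Amat = \Bmat = \Id_D$, and the paper gives no separate proof. Your careful verification that $d_A = d_B = \rank(\VSigma)$ and that the factor $\sqrt{2}$ from $d_A + d_B = 2\rank(\VSigma)$ is absorbed (using $\msf(ct) \leq c^2\msf(t)$ for $c \geq 1$) is exactly the right bookkeeping.
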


Lemma \ref{lem:directed_covariance_estimation} also has an immediate effect on the estimation of eigenvectors and eigenspaces.
Denote by $\Pmat_{i,l}(\VSigma) := \sum_{k=i}^{l}\ubs_k(\VSigma)\ubs_k(\VSigma)^\top$ the orthoprojector onto the space spanned by $i$-th to $l$-th eigenvectors of $\VSigma$, and let
$\Pmat_{i,l}(\hat \VSigma)$ be the corresponding finite sample estimator.
Moreover, denote $\Qmat_{i,l}(\hat \VSigma) := \Id_D - \Pmat_{i,l}(\hat \VSigma)$,
and $\dist{I_1}{I_2} := \inf_{t \in I_1, t' \in I_2}\SN{t-t'}$ for $I_1,\, I_2 \subset \bbR$.
\begin{proposition}
\label{thm:directed_eigenspace_perturbation}
Let $i \leq l \in \bbN$, and define
\begin{equation}
\label{eqn:directional_DK_spgap}
\begin{aligned}
\delta_{il} &= \dist{[\lambda_l(\VSigma),\lambda_i(\VSigma)]}{[-\infty,\lambda_{l+1}(\hat\VSigma)]\cup[\lambda_{i-1}(\hat\VSigma),+\infty]}, \\
&\qquad\text{ with } \lambda_0(\hat \VSigma) := \infty,\, \lambda_{D+1}(\hat \VSigma) = -\infty.
\end{aligned}
\end{equation}
For any $u>0$, with probability at least $1 - \exp(-u)$ we have, with $\msf(t) = t \vee t^2$,
\begin{align}\label{eqn:directional_DK}
\textN{\Qmat_{i,l}(\hat \VSigma)\Pmat_{i,l}(\VSigma)}_2 \lesssim  \frac{\textN{\Pmat_{i,l}(\VSigma)\centerRV{X}}_{\psi_2}\big\|{\centerRV{X}}\big\|_{\psi_2}}{\delta_{il}}\msf\LRP{\sqrt{\frac{\rank(\VSigma)+u}{N}}}.
\end{align}
\end{proposition}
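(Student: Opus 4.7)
The plan is to reduce the eigenspace perturbation bound to a directional covariance concentration estimate via a Davis--Kahan style sin-theta argument, and then invoke Lemma \ref{lem:directed_covariance_estimation} with a judicious choice of $\Amat$ and $\Bmat$.

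First I would recall that $\Pmat_{i,l}(\VSigma)$ is the spectral projector of $\VSigma$ onto eigenvectors with eigenvalues in the interval $S_1 = [\lambda_l(\VSigma),\lambda_i(\VSigma)]$, while $\Qmat_{i,l}(\hat\VSigma) = \Id_D - \Pmat_{i,l}(\hat\VSigma)$ is the spectral projector of $\hat\VSigma$ onto eigenvectors with eigenvalues in $S_2 = [-\infty,\lambda_{l+1}(\hat\VSigma)]\cup[\lambda_{i-1}(\hat\VSigma),+\infty]$, and that by construction $\dist(S_1,S_2) = \delta_{il}$. The standard sin-theta theorem (applied to $\VSigma$ and the perturbation $\hat\VSigma - \VSigma$) then yields
\begin{equation*}
\textN{\Qmat_{i,l}(\hat \VSigma)\Pmat_{i,l}(\VSigma)}_2 \;\leq\; \frac{\textN{\Qmat_{i,l}(\hat \VSigma)(\hat \VSigma - \VSigma)\Pmat_{i,l}(\VSigma)}_2}{\delta_{il}}.
\end{equation*}
Since $\Qmat_{i,l}(\hat\VSigma)$ is an orthogonal projector, its spectral norm is at most one, so submultiplicativity lets me drop it from the numerator and reduce the problem to a bound on $\textN{(\hat \VSigma - \VSigma)\Pmat_{i,l}(\VSigma)}_2$.

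Next I would apply Lemma \ref{lem:directed_covariance_estimation} with $\Amat = \Id_D$ and $\Bmat = \Pmat_{i,l}(\VSigma)$. Under this choice, $d_A = \rank(\VSigma)$ and $d_B = \rank(\Pmat_{i,l}(\VSigma)\VSigma) \leq l-i+1 \leq \rank(\VSigma)$, hence $d_A+d_B \leq 2\rank(\VSigma)$ and the factor $\msf(\sqrt{(d_A+d_B+u)/N})$ can be absorbed into the universal constant to yield $\msf(\sqrt{(\rank(\VSigma)+u)/N})$. The sub-Gaussian factors become $\textN{\Amat\centerRV X}_{\psi_2} = \textN{\centerRV X}_{\psi_2}$ and $\textN{\Bmat\centerRV X}_{\psi_2} = \textN{\Pmat_{i,l}(\VSigma)\centerRV X}_{\psi_2}$, which are exactly the quantities appearing in \eqref{eqn:directional_DK}.

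Combining the two inequalities on the same event of probability at least $1-\exp(-u)$ furnished by the lemma gives the desired bound. Conceptually this proof is almost a one-line consequence of Lemma \ref{lem:directed_covariance_estimation} plus Davis--Kahan; the only mild subtlety is that the $\Qmat$ side of the sin-theta bound involves the random projector $\Qmat_{i,l}(\hat\VSigma)$, so one cannot directly invoke the lemma with $\Amat = \Qmat_{i,l}(\hat\VSigma)$ (which is not deterministic). Discarding the random projector via $\textN{\Qmat_{i,l}(\hat\VSigma)}_2 \le 1$ circumvents this, at the mild cost of replacing the even finer quantity $\textN{\Qmat_{i,l}(\hat\VSigma)\centerRV X}_{\psi_2}$ by the global $\textN{\centerRV X}_{\psi_2}$. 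Apart from this, the proof is essentially book-keeping of ranks and sub-Gaussian norms.
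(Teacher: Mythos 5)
Your proof is correct and follows essentially the same route as the paper: a Davis--Kahan sin-theta bound followed by $\textN{\Qmat_{i,l}(\hat\VSigma)}_2 \le 1$ to drop the random projector, then Lemma~\ref{lem:directed_covariance_estimation} with $\Amat=\Id_D$, $\Bmat=\Pmat_{i,l}(\VSigma)$. The only cosmetic difference is that the paper cites the Bhatia variant carrying an explicit $\pi/2$ constant, which is absorbed into $\lesssim$ in any case.
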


\begin{proof}
Davis-Kahan Theorem in \cite[Theorem 7.3.2]{bhatia2013matrix} gives
\begin{align*}
\textN{\Qmat_{i,l}(\hat \VSigma)\Pmat_{i,l}(\VSigma)}_2 &\leq
\frac{\pi}{2}\frac{\textN{\Qmat_{i,l}(\hat \VSigma)(\VSigma - \hat \VSigma)\Pmat_{i,l}(\VSigma)}_2}{\delta_{il}} \leq \frac{\pi}{2}\frac{\textN{(\VSigma - \hat \VSigma)\Pmat_{i,l}(\VSigma)}_2}{\delta_{il}}.
\end{align*}
The claim now follows by applying Lemma \ref{lem:directed_covariance_estimation} with $\Amat = \Id_D$ and $\Bmat = \Pmat_{i,l}(\VSigma)$.
\end{proof}

Typical bounds for eigenspace perturbations $\textN{\Qmat_{i,l}(\hat \VSigma)\Pmat_{i,l}(\VSigma)}_2$ take the specific eigenspace into account only through the denominator, whereas the numerator relies on squared terms of the form $\textN{\centerRV{X}}_{\psi_2}^2$ in the sub-Gaussian case, or a bound for $\textN{{X}}_{2}^2$ in the bounded case.
Expression \eqref{eqn:directional_DK} is thus beneficial if $\textN{\Pmat_{i,l}(\VSigma)\centerRV{X}}_{\psi_2}$ is smaller than $\textN{\centerRV{X}}_{\psi_2}$, as it
provides a sharper estimate.

In order to ensure $\delta_{il} > 0$, the covariance matrix $\VSigma$ must have a population eigengap, that is, $\delta_{il}^* := (\lambda_{i-1}(\VSigma)-\lambda_i(\VSigma)) \wedge (\lambda_l(\VSigma) - \lambda_{l+1}(\VSigma)) > 0$,
and sufficiently many samples are required in order to stabilize $\delta_{il}$ around $\delta^*_{il}$.
The latter is typically achieved by first using Weyl’s bound \cite{weyl1912asymptotische},
giving $\textSN{\lambda_j(\hat \VSigma) -\lambda_j(\VSigma)}\leq \textN{\hat \VSigma-\VSigma}_2$ for all $j \in [D]$,
and then applying a concentration bound for $\textN{\hat \VSigma-\VSigma}_2$. A consequence however
is that Proposition \ref{thm:directed_eigenspace_perturbation} is only informative if we have
sufficiently many samples with respect to $\delta^*_{il}$ and $\textN{\centerRV{X}}_{\psi_2}$.
Thus, the estimation error is no longer sensitive to the eigenspace of interest.

Preserving the dependence on $\textN{\Pmat_{il}(\VSigma)\centerRV{X}}_{\psi_2}$, instead of on $\textN{\centerRV{X}}_{\psi_2}$,
requires the use of relative eigenvalue bounds. Such bounds have been recently provided in \cite{reiss2020nonasymptotic}
under the assumption that $X$ is strictly sub-Gaussian \cite{arbel2019strictsubgaussian},
i.e. for some $K > 0$, and for arbitrary matrices $\Umat\in\bbR^{k\times D}$, $X$ satisfies
\begin{equation}\label{eqn:norm_var_proxy_new}
\|\Umat\centerRV{X}\|_{\psi_2}^2 \leq K \| \textCovv{\Umat\centerRV{X}} \|_2.
\end{equation}
This asserts that the sub-Gaussian norm is a good proxy
for variances of one-dimensional marginals of the random vector $X$, and is for instance
satisfied for $X \sim \CN(\boldsymbol{0},\VSigma)$ with $K = 1$. For such distributions we obtain the following
concentration of the sample eigengap.

\begin{lemma}
\label{lem:concentration_sample_eigengap}
Assume $X$ satisfies \eqref{eqn:norm_var_proxy_new} for some $K > 0$.
Let $i,l\in \bbN$ with $1\leq i \leq l\leq \rank(\VSigma)$, and $\{i,l\} \neq \{1,\rank(\VSigma)\}$.
Let $\msf(t) = t \vee t^2$. There exists a constant $C_K>0$, depending only on $K$, so that whenever
\begin{align}
\label{eqn:req_N_concentration_sample_eigengap}
N > C_K\left(\rank(\VSigma)\vee u\right)\msf\left(\frac{\lambda_{l+1}(\VSigma)}{\lambda_l(\VSigma)-\lambda_{l+1}(\VSigma)} \vee \frac{\lambda_{i-1}(\VSigma)}{\lambda_{i-1}(\VSigma)-\lambda_i(\VSigma)}\right),
\end{align}
(with conventions $\lambda_{0}(\VSigma) = \infty$, $\lambda_{\rank(\VSigma)+1}(\VSigma) = 0$, and $\infty/\infty = 0$)
for any $u>0$ with probability at least $1 - 2\exp(-u)$ we have $\delta_{il} \geq \delta_{il}^*/2$.
\end{lemma}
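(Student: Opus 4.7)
The plan is to derive a two-sided \emph{relative} Weyl-type bound $(1-\epsilon)\lambda_j(\VSigma)\leq \lambda_j(\hat\VSigma)\leq (1+\epsilon)\lambda_j(\VSigma)$ for every $j \leq \rank(\VSigma)$, then apply it at $j = l+1$ and $j = i-1$ to control the sample eigenvalues entering \eqref{eqn:directional_DK_spgap}. Such a multiplicative bound is obtained by feeding Lemma \ref{lem:directed_covariance_estimation} not the increment $\hat\VSigma - \VSigma$ itself, but its \emph{standardization} $\VSigma^{\dagger/2}(\hat\VSigma - \VSigma)\VSigma^{\dagger/2}$.

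Concretely, assumption \eqref{eqn:norm_var_proxy_new} with $\Umat = \VSigma^{\dagger/2}$ gives $\|\VSigma^{\dagger/2}\centerRV X\|_{\psi_2}^2 \leq K\,\|\VSigma^{\dagger/2}\VSigma\VSigma^{\dagger/2}\|_2 = K$, since $\VSigma^{\dagger/2}\VSigma\VSigma^{\dagger/2}$ is the orthoprojector onto $\Im(\VSigma)$. Lemma \ref{lem:directed_covariance_estimation} applied with $\Amat = \Bmat = \VSigma^{\dagger/2}$, noting $d_A = d_B = \rank(\VSigma^{\dagger/2}\VSigma) = \rank(\VSigma)$, yields with probability at least $1-\exp(-u)$
\[
\epsilon := \textN{\VSigma^{\dagger/2}(\hat\VSigma - \VSigma)\VSigma^{\dagger/2}}_2 \lesssim K\,\msf\LRP{\sqrt{\tfrac{\rank(\VSigma)+u}{N}}}.
\]
Since $\centerRV X \in \Im(\VSigma)$ almost surely, the centered sample covariance satisfies $\Im(\hat\VSigma) \subset \Im(\VSigma)$. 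For any $w\in\Im(\VSigma)$, setting $u = \VSigma^{1/2} w \in \Im(\VSigma)$, a direct computation shows $w^\top \VSigma w = \|u\|_2^2$ and $w^\top (\hat\VSigma-\VSigma) w = u^\top \VSigma^{\dagger/2}(\hat\VSigma - \VSigma)\VSigma^{\dagger/2} u$, so $|w^\top(\hat\VSigma-\VSigma) w| \leq \epsilon\,w^\top \VSigma w$. Courant--Fischer on $\Im(\VSigma)$ then produces the relative inequality above for $1 \leq j \leq \rank(\VSigma)$. The requirement \eqref{eqn:req_N_concentration_sample_eigengap} is calibrated so that $\epsilon \leq \tfrac12\min(r_l^{-1}, r_{i-1}^{-1})$, where $r_l = \lambda_{l+1}(\VSigma)/(\lambda_l(\VSigma) - \lambda_{l+1}(\VSigma))$ and $r_{i-1} = \lambda_{i-1}(\VSigma)/(\lambda_{i-1}(\VSigma) - \lambda_i(\VSigma))$. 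Specializing to $j = l+1$ gives $\lambda_l(\VSigma) - \lambda_{l+1}(\hat\VSigma) \geq (\lambda_l(\VSigma) - \lambda_{l+1}(\VSigma))/2 \geq \delta_{il}^*/2$, and to $j = i-1$ gives $\lambda_{i-1}(\hat\VSigma) - \lambda_i(\VSigma) \geq (\lambda_{i-1}(\VSigma) - \lambda_i(\VSigma))/2 \geq \delta_{il}^*/2$, so \eqref{eqn:directional_DK_spgap} implies $\delta_{il} \geq \delta_{il}^*/2$. The boundary cases ($i = 1$ or $l = \rank(\VSigma)$) use the stated conventions together with $\lambda_{\rank(\VSigma)+1}(\hat\VSigma) = 0$, forced by $\Im(\hat\VSigma) \subset \Im(\VSigma)$.

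The main obstacle is making the perturbation bound genuinely relative to each $\lambda_j(\VSigma)$ individually, rather than to $\lambda_1(\VSigma)$ as in a naive application of Lemma \ref{lem:directed_covariance_estimation} with a spectral projector on one side of the gap (which would spoil the $r_{i-1}$-dependence and force a condition involving the overall condition number). The strict sub-Gaussian assumption \eqref{eqn:norm_var_proxy_new} unlocks this through the $\VSigma^{\dagger/2}$ standardization, isotropizing $\centerRV X$ and bounding $\epsilon$ independently of the eigenvalue scale. The pseudoinverse bookkeeping for a rank-deficient $\VSigma$ is handled by restricting all Rayleigh-quotient arguments to $\Im(\VSigma)$, and the stated probability $1-2\exp(-u)$ comfortably absorbs the single application of Lemma \ref{lem:directed_covariance_estimation} along with any constant tweaks.
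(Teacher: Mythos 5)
Your proof is correct, and it takes a genuinely different route from the paper's. The paper invokes the relative eigenvalue bounds of Rei{\ss} and Wahl \cite[Propositions 3.10 and 3.13]{reiss2020nonasymptotic}, applies them at levels $i-1$ and $l+1$, and then coarsens the resulting spectral sums $\sum_j \lambda_j/(\lambda_j - \lambda_{i-1} + (\lambda_{i-1}-\lambda_i)/2)$ by the crude bound $\rank(\VSigma)\,\lambda_{i-1}/(\lambda_{i-1}-\lambda_i)$ (and its analogue for $l$), which is exactly what produces the condition \eqref{eqn:req_N_concentration_sample_eigengap}. You instead isotropize: you control $\epsilon = \|\sqrt{\VSigma^\dagger}(\hat\VSigma - \VSigma)\sqrt{\VSigma^\dagger}\|_2$ directly through Lemma \ref{lem:directed_covariance_estimation} (exploiting \eqref{eqn:norm_var_proxy_new} to get the $K$-bound on the sub-Gaussian norm of $\sqrt{\VSigma^\dagger}\tilde X$), then push a two-sided relative Weyl inequality $(1-\epsilon)\lambda_j(\VSigma)\leq\lambda_j(\hat\VSigma)\leq(1+\epsilon)\lambda_j(\VSigma)$ through Courant--Fischer restricted to $\Im(\VSigma)$, which is licit since $\Im(\hat\VSigma)\subset\Im(\VSigma)$. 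Your calibration of $\epsilon$ against $\min(r_l^{-1}, r_{i-1}^{-1})$ reproduces condition \eqref{eqn:req_N_concentration_sample_eigengap} up to $K$-dependent constants, and the boundary cases are correctly handled through the stated conventions. The trade-off: the paper's route could in principle yield a sharper sample-size requirement that exploits spectral decay (since the Rei{\ss}--Wahl sums need not scale like $\rank(\VSigma)$ times a single ratio), but since the paper discards this refinement anyway, the two arguments are quantitatively equivalent here. Your version has the advantage of being entirely self-contained, relying only on the paper's own Lemma \ref{lem:directed_covariance_estimation} plus a textbook Courant--Fischer argument, and it uses only a single probabilistic event, so the advertised $1-2\exp(-u)$ is actually slack.
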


The case $\{i,l\} = \{1,\rank(\VSigma)\}$ is equivalent to asking whether $\Im(\hat \VSigma) = \Im(\VSigma)$.
This holds for $N \geq \rank(\VSigma)$ if the law of $X$ has a density which is absolutely
continuous with respect to the Lebesgue measure on $\Im(\VSigma)$, because $X_1,\ldots,X_N$ are almost surely linearly
independent.
It also holds with probability at least $1-\exp(-u)$ if $X$ is sub-Gaussian as soon as $N > C_K(\rank(\VSigma) + u)$,
provided \eqref{eqn:norm_var_proxy_new} holds, and if \eqref{eqn:norm_var_proxy_new} does not hold then we need
$N > C(\rank(\VSigma) + u)\textN{\sqrt{\VSigma^{\dagger}}\centerRV{X}}_{\psi_2}^4$.

Lemma \ref{lem:concentration_sample_eigengap} now allows to refine Proposition \ref{thm:directed_eigenspace_perturbation} with a population eigengap.
\begin{proposition}
\label{thm:directed_eigenspace_perturbation_pop_gap}
Assume $X$ satisfies \eqref{eqn:norm_var_proxy_new} for some $K > 0$. Let $1\leq i \leq l \leq \rank(\VSigma)$
with $\{i,l\} \neq \{1,\rank(\VSigma)\}$. Let $\msf(t) = t \vee t^2$.
There exists $C_K>0$, depending only on $K$, so that whenever
$N$ satisfies \eqref{eqn:req_N_concentration_sample_eigengap} for any $u>0$,
with probability at least $1 - \exp(-u)$ we have
\begin{align}\label{eqn:directional_DK_pop_gap}
\textN{\Qmat_{i,l}(\hat \VSigma)\Pmat_{i,l}(\VSigma)}_2 &\leq  C_K\frac{\textN{\Pmat_{i,l}(\VSigma)\centerRV{X}}_{\psi_2}\big\|{\centerRV{X}}\big\|_{\psi_2}}{\delta_{il}^*}\msf\LRP{\sqrt{\frac{\rank(\VSigma)+u}{N}}}\nonumber
\\&\leq C_K\frac{\sqrt{K\lambda_{i}(\VSigma)}\big\|{\centerRV{X}}\big\|_{\psi_2}}{\delta_{il}^*}\msf\LRP{\sqrt{\frac{\rank(\VSigma)+u}{N}}}
\end{align}
\end{proposition}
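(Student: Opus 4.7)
The plan is to chain together the two ingredients already stated in the excerpt: Proposition \ref{thm:directed_eigenspace_perturbation} provides the perturbation bound with the sample eigengap $\delta_{il}$ in the denominator, and Lemma \ref{lem:concentration_sample_eigengap} controls $\delta_{il}$ from below by $\delta_{il}^*/2$ under the sample size assumption \eqref{eqn:req_N_concentration_sample_eigengap}. The final piece is to translate $\textN{\Pmat_{i,l}(\VSigma)\centerRV{X}}_{\psi_2}$ into $\sqrt{K\lambda_i(\VSigma)}$ via the strict sub-Gaussian assumption \eqref{eqn:norm_var_proxy_new}.

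Concretely, first I would invoke Lemma \ref{lem:concentration_sample_eigengap} with a rescaled confidence parameter (say $u' = u + \log 3$, absorbed into $C_K$) to obtain $\delta_{il} \geq \delta_{il}^*/2$ on an event $\mathcal{E}_1$ of probability at least $1 - 2\exp(-u')$. Next I would apply Proposition \ref{thm:directed_eigenspace_perturbation} on a separate event $\mathcal{E}_2$ of probability at least $1 - \exp(-u')$. On $\mathcal{E}_1 \cap \mathcal{E}_2$, which by a union bound has probability at least $1-3\exp(-u') \geq 1-\exp(-u)$ for the stated adjustment, the denominator in \eqref{eqn:directional_DK} is at least $\delta_{il}^*/2$, yielding the first inequality of \eqref{eqn:directional_DK_pop_gap} with a constant that can be absorbed into $C_K$.

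For the second inequality, I would apply \eqref{eqn:norm_var_proxy_new} to the matrix $\Umat = \Pmat_{i,l}(\VSigma)$, giving
\begin{equation*}
\textN{\Pmat_{i,l}(\VSigma)\centerRV{X}}_{\psi_2}^2 \leq K \big\|\Covv{\Pmat_{i,l}(\VSigma)\centerRV{X}}\big\|_2 = K\big\|\Pmat_{i,l}(\VSigma)\VSigma\Pmat_{i,l}(\VSigma)\big\|_2.
\end{equation*}
Since $\Pmat_{i,l}(\VSigma)$ is the spectral projector onto the eigenvectors $\ubs_i(\VSigma),\ldots,\ubs_l(\VSigma)$, the matrix $\Pmat_{i,l}(\VSigma)\VSigma\Pmat_{i,l}(\VSigma)$ has spectrum $\{\lambda_i(\VSigma),\ldots,\lambda_l(\VSigma)\}$ (together with zeros), so its spectral norm equals $\lambda_i(\VSigma)$. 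This furnishes $\textN{\Pmat_{i,l}(\VSigma)\centerRV{X}}_{\psi_2} \leq \sqrt{K\lambda_i(\VSigma)}$, giving the second inequality.

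I do not expect a genuine obstacle here: the substantive probabilistic content already lives in Lemma \ref{lem:concentration_sample_eigengap} and Proposition \ref{thm:directed_eigenspace_perturbation}, and the present statement is essentially their combination plus the one-line strict sub-Gaussian reduction. The only care needed is bookkeeping of confidence levels in the union bound and absorbing numerical factors into the $K$-dependent constant $C_K$; the sample-size hypothesis \eqref{eqn:req_N_concentration_sample_eigengap} is exactly the one required by Lemma \ref{lem:concentration_sample_eigengap} and imposes no additional requirement for Proposition \ref{thm:directed_eigenspace_perturbation}, so no further compatibility check is needed.
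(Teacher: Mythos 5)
Your proposal is correct and matches the paper's own argument step for step: combine Proposition~\ref{thm:directed_eigenspace_perturbation} with Lemma~\ref{lem:concentration_sample_eigengap} via a union bound to replace $\delta_{il}$ by $\delta_{il}^*/2$, then apply \eqref{eqn:norm_var_proxy_new} to $\Pmat_{i,l}(\VSigma)$ together with $\textN{\Covv{\Pmat_{i,l}(\VSigma)X}}_2 = \lambda_i(\VSigma)$ to get the second inequality. Your explicit rescaling $u' = u + \log 3$ is a slightly more careful bookkeeping of the union bound than the paper's terse remark, but it is the same mechanism with the constant absorbed into $C_K$.
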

\begin{proof}
The first inequality follows immediately by first conditioning on events in Proposition \ref{thm:directed_eigenspace_perturbation}, Lemma \ref{lem:concentration_sample_eigengap},
and then using the union bound (probability $1-3\exp(-u)$ can be adjusted by adjusting $C_K$). The second inequality in \eqref{eqn:directional_DK_pop_gap}
comes from additionally using \eqref{eqn:norm_var_proxy_new} and $\textN{\Covv{\Pmat_{i,l}(\VSigma)X}}_2 = \lambda_i(\VSigma)$.
\end{proof}
Recently, \cite{yu2014useful} showed a useful alternative
\begin{align}
\label{eq:samford_bound}
\textN{\Qmat_{i,l}(\hat \VSigma)\Pmat_{i,l}(\VSigma)}_F \lesssim  \frac{\big({D^{1/2}\textN{\hat \VSigma-\VSigma}_2 \wedge \textN{\hat \VSigma - \VSigma}_F}\big)}{\delta^*_{il}}.
\end{align}

Thus, with \eqref{eq:samford_bound} we do not have to stabilize the sample eigengap, and the eigenspace perturbation bound \eqref{eq:samford_bound} can be
used for arbitrary $N\geq 1$.
A natural question to ask is whether Proposition \ref{thm:directed_eigenspace_perturbation}
holds if $\delta_{il}$ is replaced with $\delta^*_{il}$.
The following example strongly suggests this is not the case.
\begin{example}
\label{example:eigengap}
Assume that Proposition \ref{thm:directed_eigenspace_perturbation} holds with $\delta_{il}^\ast$ in place of $\delta_{il}$.
Let $X \sim \CN(\boldsymbol{0},\VSigma)$, where $\VSigma = \sum_{i=1}^{D-1}\ubs_i\ubs_i^\top + \eta^2 \ubs_D\ubs_D^\top$,
for $\eta < 1$.
Notice that in this case $\textN{\centerRV{X}}_{\psi_2} = 1$ and $\textN{\ubs_D^\top X}_{\psi_2} = \eta$, by the definition of the sub-Gaussian norm.
For any $N\geq 1$ with probability at least $1-\exp(-u)$ we would have
\begin{align*}
\textN{\Qmat_{D,D}(\hat \VSigma)\Pmat_{D,D}(\VSigma)}_2 \lesssim \frac{\eta}{1-\eta^2} \msf\bigg(\frac{D+u}{N}\bigg).
\end{align*}
In particular, we have an increasingly small estimation error as $\eta \rightarrow 0$ by using only one data sample, i.e. by estimating $\Pmat_{D,D}(\VSigma)$ based
on the eigendecomposition of a rank one matrix $XX^\top$.
\end{example}

%%%%%%%%%%%%%%%%%%%%%%%%%%%%%%%%%
%			Section 3			%
%%%%%%%%%%%%%%%%%%%%%3%%%%%%%%%%%%
\section{Precision matrix estimation}
\label{sec:precision_matrix_estimation}
In this section we investigate directional estimates of the precision matrix
$\VSigma^{\dagger}$ through the empirical precision matrix $\hat \VSigma^{\dagger}$,
analogously to results in Section \ref{sec:covariance_estimation}.

\begin{theorem}
\label{thm:condition_free_directional_precision_matrix_estimation_arbitrary}
Let $\Amat \in \bbR^{d_1\times D}$, $\Bmat \in \bbR^{d_2\times D}$.
There exists a uniform constant $C>0$ such that if $N > C(\rank(\VSigma) + u)\textN{\sqrt{\VSigma^{\dagger}}\centerRV{X}}_{\psi_2}^4$
for any $u>0$ with probability at least  $1-\exp(-u)$ we have
\begin{align}
\label{eq:cross_term_result_arbitrary_A_B}
\|{\Amat({\hat \VSigma^{\dagger} - \VSigma^{\dagger}})\Bmat^\top}\|_2 \lesssim \textN{\Amat\VSigma^{\dagger} \centerRV{X}}_{\psi_2} \textN{\Bmat\VSigma^{\dagger} \centerRV{X}}_{\psi_2}\sqrt{\frac{\rank(\VSigma)+u}{N}}.
\end{align}
\end{theorem}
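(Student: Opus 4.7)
The plan is to pass through the standardized random vector $Z := \sqrt{\VSigma^{\dagger}}\centerRV{X}$, whose covariance is $P$, the orthoprojector onto $\Im(\VSigma)$, and whose sub-Gaussian norm $\textN{Z}_{\psi_2} = \textN{\sqrt{\VSigma^{\dagger}}\centerRV{X}}_{\psi_2}$ is precisely what governs the sample size condition. Setting $\Delta := \sqrt{\VSigma^{\dagger}}(\hat\VSigma - \VSigma)\sqrt{\VSigma^{\dagger}} = \hat\VSigma_Z - P$, where $\hat\VSigma_Z$ is the sample covariance of $Z$, Corollary \ref{cor:low_rank} applied to $Z$ shows that on an event $\mathcal{E}_0$ of probability at least $1 - \exp(-u)$ we have $\textN{\Delta}_2 \lesssim \textN{Z}_{\psi_2}^2 \sqrt{(\rank(\VSigma)+u)/N}$. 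By the assumed lower bound on $N$ this is $\leq 1/2$, so $\Im(\hat\VSigma) = \Im(\VSigma)$, $\hat\VSigma^{\dagger} = \sqrt{\VSigma^{\dagger}}\hat\VSigma_Z^{\dagger}\sqrt{\VSigma^{\dagger}}$, and $\textN{\hat\VSigma_Z^{\dagger}}_2 \leq 2$.

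On $\mathcal{E}_0$, starting from the pseudo-inverse identity $\hat\VSigma^{\dagger} - \VSigma^{\dagger} = -\hat\VSigma^{\dagger}(\hat\VSigma - \VSigma)\VSigma^{\dagger}$ (valid because both operators have range $\Im(\VSigma)$) and substituting $\hat\VSigma^{\dagger} = \VSigma^{\dagger} - \VSigma^{\dagger}(\hat\VSigma - \VSigma)\hat\VSigma^{\dagger}$ once produces the symmetric split
\[
\hat\VSigma^{\dagger} - \VSigma^{\dagger} = -\VSigma^{\dagger}(\hat\VSigma - \VSigma)\VSigma^{\dagger} + \VSigma^{\dagger}(\hat\VSigma - \VSigma)\hat\VSigma^{\dagger}(\hat\VSigma - \VSigma)\VSigma^{\dagger},
\]
so $\Amat(\hat\VSigma^{\dagger} - \VSigma^{\dagger})\Bmat^\top = M_1 + M_2$, with $M_1$ first-order and $M_2$ genuinely second-order in $\hat\VSigma - \VSigma$. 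The main term $M_1$ is bounded by a single application of Lemma \ref{lem:directed_covariance_estimation} with $\Amat' = \Amat\VSigma^{\dagger}$ and $\Bmat' = \Bmat\VSigma^{\dagger}$; the ranks $\rank(\Amat'\VSigma), \rank(\Bmat'\VSigma) \leq \rank(\VSigma)$ together with the sample size condition (which confines $\msf$ to its linear branch) deliver exactly the target bound.

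For $M_2$, I factor $\hat\VSigma^{\dagger} = \sqrt{\VSigma^{\dagger}}\hat\VSigma_Z^{\dagger}\sqrt{\VSigma^{\dagger}}$ to write
\[
M_2 = \bigl[\Amat\VSigma^{\dagger}(\hat\VSigma - \VSigma)\sqrt{\VSigma^{\dagger}}\bigr]\,\hat\VSigma_Z^{\dagger}\,\bigl[\sqrt{\VSigma^{\dagger}}(\hat\VSigma - \VSigma)\VSigma^{\dagger}\Bmat^\top\bigr].
\]
Bounding the middle factor by $2$ and each outer factor by Lemma \ref{lem:directed_covariance_estimation} (with $\sqrt{\VSigma^{\dagger}}$ playing the role of one of the bracketing matrices, producing $\textN{\sqrt{\VSigma^{\dagger}}\centerRV{X}}_{\psi_2} = \textN{Z}_{\psi_2}$) yields $\textN{M_2}_2 \lesssim \textN{\Amat\VSigma^{\dagger}\centerRV{X}}_{\psi_2}\textN{\Bmat\VSigma^{\dagger}\centerRV{X}}_{\psi_2}\textN{Z}_{\psi_2}^2(\rank(\VSigma)+u)/N$. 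The sample size assumption gives $\textN{M_2}_2/\textN{M_1}_2 \lesssim \textN{Z}_{\psi_2}^2\sqrt{(\rank(\VSigma)+u)/N} \leq 1/\sqrt{C}$, so the correction is absorbed into the main term. A union bound over the at most four concentration events, with $u$ inflated by an additive constant, closes the proof.

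The main technical hurdle is the symmetric split of the resolvent identity. A naive operator-norm estimate of $M_2$ would introduce a $\textN{\VSigma^{\dagger}}_2$ factor, producing precisely the condition-number dependence that Remark \ref{rem:initial_remark} warns against. Keeping each $\sqrt{\VSigma^{\dagger}}$ paired with a $(\hat\VSigma - \VSigma)$ factor inside the two outer brackets — so that Lemma \ref{lem:directed_covariance_estimation} converts them into $\psi_2$-norms of $Z$ — is what generates the extra $\textN{Z}_{\psi_2}^2\sqrt{(\rank(\VSigma)+u)/N}$ that the sample size condition is designed to control.
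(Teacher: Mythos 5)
Your proof is correct and follows essentially the same route as the paper's. The second-order resolvent identity you derive, $\hat\VSigma^{\dagger} - \VSigma^{\dagger} = -\VSigma^{\dagger}\Delta\VSigma^{\dagger} + \VSigma^{\dagger}\Delta\hat\VSigma^{\dagger}\Delta\VSigma^{\dagger}$ with $\Delta = \hat\VSigma-\VSigma$, is exactly what the paper obtains after summing its von Neumann series (the tail $\sqrt{\VSigma^{\dagger}}\sum_{k\ge 0}(-\sqrt{\VSigma^{\dagger}}\Delta\sqrt{\VSigma^{\dagger}})^{k}\sqrt{\VSigma^{\dagger}}$ is precisely $\hat\VSigma^{\dagger}$), and you then apply Lemma~\ref{lem:directed_covariance_estimation} to the same four bracketed factors and absorb the correction using the same sample-size condition.
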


Let us comment on the implications of Theorem \ref{thm:condition_free_directional_precision_matrix_estimation_arbitrary}.
First, we note that $\textN{\sqrt{\VSigma^{\dagger}}\centerRV{X}}_{\psi_2}$ is the sub-Gaussian
norm of the standardization of $X$.
Provided $X$ is strictly sub-Gaussian for some $K > 0$, see \eqref{eqn:norm_var_proxy_new},
 we have $\textN{\sqrt{\VSigma^{\dagger}}\centerRV{X}}_{\psi_2}^2 \leq K
\| \textCovv{\sqrt{\VSigma^{\dagger}}\centerRV{X}} \|_2 = K$. It follows that for such distributions
$\textN{\sqrt{\VSigma^{\dagger}}\centerRV{X}}_{\psi_2}$ has a negligible effect on estimating precision matrices.

Second, similar to Lemma \ref{lem:directed_covariance_estimation}, the bound \eqref{eq:cross_term_result_arbitrary_A_B} depends only on components of $X$ induced by $\Amat$ and $\Bmat$. Since the sub-Gaussian norm can be interpreted as a proxy for the variance, this improves non-directional bounds
whenever the eigenvalues of $\Amat \VSigma^{\dagger}\Amat^\top$ and $\Bmat \VSigma^{\dagger}\Bmat^\top$ are
small compared to those of $\VSigma^{\dagger}$.

Third, as soon as $N > C(\rank(\VSigma) + u)\textN{\sqrt{\VSigma^{\dagger}}\centerRV{X}}_{\psi_2}^4$,
the estimation rate in \eqref{eq:cross_term_result_arbitrary_A_B} is similar to
the covariance estimation rate in Lemma \ref{lem:directed_covariance_estimation}.
That is, assume we are trying to estimate $\VSigma^{\dagger}$ with the sample covariance matrix of the random vector $Z = \VSigma^{\dagger}X$ through iid. copies $Z_i=\VSigma^{\dagger}X_i$.
In that case Lemma \ref{lem:directed_covariance_estimation} gives precisely the bound \eqref{eq:cross_term_result_arbitrary_A_B}.
This should come as a bit of a surprise, since it implies that estimating the precision matrix through the inverse of the sample covariance has the same theoretical guarantees as if we had access to a random vector $Z$  whose covariance is exactly $\VSigma^\dagger$.
To further stress this point, we now compare Theorem \ref{thm:condition_free_directional_precision_matrix_estimation_arbitrary}
{for $\Amat = \Bmat = \Id_D$ with the bound
\begin{equation}
\label{eq:general_perturbation_bound_that_sucks}
\textN{\hat \VSigma^{\dagger} - \VSigma^{\dagger}}_2 \lesssim \textN{\VSigma^\dagger}_2^2 \textN{\VSigma}_2\sqrt{\frac{D+u}{N}},
\end{equation}
which was derived in Section \ref{subsec:soa_precision_matrix} using general perturbation bounds for the matrix inverse.}
\begin{corollary}
\label{cor:condition_free_precision_matrix_estimation}
There exists a uniform constant $C>0$ such that if $N > C(\rank(\VSigma) + u)\textN{\sqrt{\VSigma^{\dagger}}\centerRV{X}}_{\psi_2}^4$
for any $u > 0$ with probability at least $1-\exp(-u)$ we have
\begin{align}
\label{eq:undirectional_result}
\textN{\hat \VSigma^{\dagger} - \VSigma^{\dagger}}_2 \lesssim \textN{\VSigma^{\dagger} \tilde X}_{\psi_2}^2 \sqrt{\frac{\rank(\VSigma)+u}{N}}.
\end{align}
\end{corollary}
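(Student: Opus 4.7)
The plan is to derive the corollary as a direct specialization of Theorem \ref{thm:condition_free_directional_precision_matrix_estimation_arbitrary} with the choice $\Amat = \Bmat = \Id_D$. First I would observe that under this substitution the left-hand side of \eqref{eq:cross_term_result_arbitrary_A_B} collapses to $\textN{\hat \VSigma^{\dagger} - \VSigma^{\dagger}}_2$, since pre- and post-multiplying by the identity leaves the difference untouched. Second, I would track the two sub-Gaussian factors on the right-hand side: with $\Amat = \Id_D$ the vector $\Amat \VSigma^{\dagger}\centerRV{X}$ is just $\VSigma^{\dagger}\centerRV{X}$, and the same holds for $\Bmat$, so the product $\textN{\Amat\VSigma^{\dagger}\centerRV{X}}_{\psi_2}\textN{\Bmat\VSigma^{\dagger}\centerRV{X}}_{\psi_2}$ collapses into the single squared factor $\textN{\VSigma^{\dagger}\centerRV{X}}_{\psi_2}^2$, which is exactly what appears in \eqref{eq:undirectional_result}. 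The sample-size requirement $N > C(\rank(\VSigma) + u)\textN{\sqrt{\VSigma^{\dagger}}\centerRV{X}}_{\psi_2}^4$, the probability level $1-\exp(-u)$, and the rate $\sqrt{(\rank(\VSigma)+u)/N}$ are all inherited verbatim from the theorem.

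Because the corollary is a pure reformulation under a particular choice of matrices, I do not anticipate any real obstacle: no new concentration inequality, moment computation, or perturbation estimate needs to be invoked. The only minor bookkeeping point worth checking is that the dimensional parameter in Theorem \ref{thm:condition_free_directional_precision_matrix_estimation_arbitrary} is $\rank(\VSigma)$ itself and not some $\Amat,\Bmat$-dependent surrogate such as $\rank(\Amat\VSigma)+\rank(\Bmat\VSigma)$; since the theorem as stated already uses $\rank(\VSigma)$, the substitution $\Amat=\Bmat=\Id_D$ produces no inflation of the dimension term.

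The conceptual payoff, which I would note in a closing line rather than prove, is the contrast with the generic Moore-Penrose perturbation bound \eqref{eq:general_perturbation_bound_that_sucks}: the corollary replaces the factor $\textN{\VSigma^{\dagger}}_2^2\textN{\VSigma}_2$ by $\textN{\VSigma^{\dagger}\centerRV{X}}_{\psi_2}^2$, which for strictly sub-Gaussian $X$ (in the sense of \eqref{eqn:norm_var_proxy_new}) is of order $\textN{\VSigma^{\dagger}}_2$ rather than $\textN{\VSigma^{\dagger}}_2^2$, as discussed in Remark \ref{rem:initial_remark}. Thus, in addition to being a clean one-line consequence of the theorem, the corollary delivers the condition-number improvement that is the main qualitative message of Section \ref{sec:precision_matrix_estimation}.
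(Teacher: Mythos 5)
Your proposal is correct and matches the paper's intent exactly: the corollary is the specialization $\Amat=\Bmat=\Id_D$ of Theorem \ref{thm:condition_free_directional_precision_matrix_estimation_arbitrary}, with the two sub-Gaussian factors collapsing to the square $\textN{\VSigma^{\dagger}\centerRV{X}}_{\psi_2}^2$ (noting $\tilde X$ and $\centerRV{X}$ denote the same centered vector) and the sample-size requirement, confidence level, and rate inherited verbatim. The paper offers no separate proof for this corollary precisely because it is this one-line substitution.
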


\noindent
Assuming the squared sub-Gaussian norm is a good proxy for the variance, i.e. if
\eqref{eqn:norm_var_proxy_new} holds, the right hand side in \eqref{eq:undirectional_result}
becomes $ K \|{\VSigma^{\dagger}}\|_{2}\sqrt{(\rank(\VSigma)+u)/N}$. Compared to \eqref{eq:general_perturbation_bound_that_sucks},
this shows that using a general perturbation bound overestimates the influence the matrix condition
number $\N{\VSigma}_2\N{\VSigma^{\dagger}}_2$ has on precision matrix estimation.
The discrepancy between these two results suggests that the finite sample covariance estimator induces
a specific type of a perturbation that performs a form of regularization when estimating the inverse.

An immediate corollary is a relative bound for the smallest eigenvalue of $\VSigma$.
\begin{corollary}
Let  $d := \rank(\VSigma)$
and assume $X$ satisfies \eqref{eqn:norm_var_proxy_new} for some $K > 0$. There exists a constant $C>0$ such that provided
$N > C(d+u)\textN{\sqrt{\VSigma^{\dagger}}\centerRV{X}}_{\psi_2}^4$,
we have with probability at least $1-\exp(-u)$
\begin{align}
\label{eq:relative_eigenvalue}
\bigg\lvert{\frac{\lambda_d(\VSigma)}{\lambda_d(\hat \VSigma)} - 1}\bigg\rvert\lesssim K\sqrt{\frac{d+u}{N}}.
\end{align}
If \eqref{eqn:norm_var_proxy_new} does not hold the right hand side of \eqref{eq:relative_eigenvalue} is, up to absolute constants, replaced by $\lambda_d(\VSigma)\textN{\VSigma^{\dagger} \centerRV{X}}_{\psi_2}^2\sqrt{(d+u)/N}$.
\end{corollary}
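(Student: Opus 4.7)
The plan is to reduce the relative eigenvalue bound to a bound on $\N{\hat\VSigma^\dagger - \VSigma^\dagger}_2$ via a simple algebraic identity, and then invoke Corollary~\ref{cor:condition_free_precision_matrix_estimation}. The assumption $N > C(d+u)\textN{\sqrt{\VSigma^\dagger}\centerRV{X}}_{\psi_2}^4$ ensures, as discussed in the excerpt right before Section~\ref{sec:precision_matrix_estimation}, that $\Im(\hat\VSigma) = \Im(\VSigma)$ with probability at least $1 - \exp(-u)$. In particular $\rank(\hat\VSigma) = d$, so the nonzero eigenvalues of $\hat\VSigma^\dagger$ are the reciprocals of the nonzero eigenvalues of $\hat\VSigma$, giving the key identity
\[
\frac{\lambda_d(\VSigma)}{\lambda_d(\hat\VSigma)} \;=\; \frac{\lambda_1(\hat\VSigma^\dagger)}{\lambda_1(\VSigma^\dagger)}.
\]

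Subtracting $1$ and using $\lambda_1(\VSigma^\dagger) = \N{\VSigma^\dagger}_2 = 1/\lambda_d(\VSigma)$, Weyl's inequality applied to the symmetric matrices $\hat\VSigma^\dagger, \VSigma^\dagger$ yields
\[
\bigg\lvert \frac{\lambda_d(\VSigma)}{\lambda_d(\hat\VSigma)} - 1 \bigg\rvert \;=\; \frac{\lvert\lambda_1(\hat\VSigma^\dagger) - \lambda_1(\VSigma^\dagger)\rvert}{\lambda_1(\VSigma^\dagger)} \;\leq\; \frac{\N{\hat\VSigma^\dagger - \VSigma^\dagger}_2}{\lambda_1(\VSigma^\dagger)}.
\]
Conditioning additionally on the event of Corollary~\ref{cor:condition_free_precision_matrix_estimation} (which holds under the same sample-size assumption, with probability at least $1-\exp(-u)$; the union bound is absorbed into the constant $C$), the numerator is at most a constant times $\textN{\VSigma^\dagger \centerRV{X}}_{\psi_2}^2\sqrt{(d+u)/N}$.

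It remains to relate $\textN{\VSigma^\dagger \centerRV{X}}_{\psi_2}^2 / \lambda_1(\VSigma^\dagger)$ to the quantities in the statement. Without any extra assumption, $1/\lambda_1(\VSigma^\dagger) = \lambda_d(\VSigma)$, which gives precisely the fallback form $\lambda_d(\VSigma)\textN{\VSigma^\dagger\centerRV{X}}_{\psi_2}^2\sqrt{(d+u)/N}$ stated at the end of the corollary. When \eqref{eqn:norm_var_proxy_new} holds, applying it with $\Umat = \VSigma^\dagger$ together with $\N{\Covv{\VSigma^\dagger \centerRV{X}}}_2 = \N{\VSigma^\dagger \VSigma \VSigma^\dagger}_2 = \N{\VSigma^\dagger}_2 = \lambda_1(\VSigma^\dagger)$ gives $\textN{\VSigma^\dagger\centerRV{X}}_{\psi_2}^2 \leq K\lambda_1(\VSigma^\dagger)$, so the ratio is bounded by $K$ and we recover the claimed bound $K\sqrt{(d+u)/N}$. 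No step looks like a genuine obstacle: the only thing to be mildly careful about is the rank equality $\rank(\hat\VSigma) = \rank(\VSigma)$, which is precisely what the sample-size hypothesis was designed to guarantee.
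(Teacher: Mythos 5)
Your proof is correct. The paper states this corollary without a proof (it is presented as an ``immediate corollary'' of Corollary~\ref{cor:condition_free_precision_matrix_estimation}), and your chain of reasoning---rank equality $\rank(\hat\VSigma)=d$ under the sample-size hypothesis, the identity $\lambda_d(\VSigma)/\lambda_d(\hat\VSigma)=\lambda_1(\hat\VSigma^\dagger)/\lambda_1(\VSigma^\dagger)$, Weyl's inequality for the symmetric matrices $\hat\VSigma^\dagger,\VSigma^\dagger$, Corollary~\ref{cor:condition_free_precision_matrix_estimation} for $\|\hat\VSigma^\dagger-\VSigma^\dagger\|_2$, and finally \eqref{eqn:norm_var_proxy_new} with $\Umat=\VSigma^\dagger$ together with $\VSigma^\dagger\VSigma\VSigma^\dagger=\VSigma^\dagger$---is exactly the intended argument, including the fallback form when \eqref{eqn:norm_var_proxy_new} is not assumed. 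One minor remark: the event $\Im(\hat\VSigma)=\Im(\VSigma)$ is already contained in the event underlying Corollary~\ref{cor:condition_free_precision_matrix_estimation} (the proof of Theorem~\ref{thm:condition_free_directional_precision_matrix_estimation_arbitrary} conditions on $\textN{\sqrt{\VSigma^\dagger}(\VSigma-\hat\VSigma)\sqrt{\VSigma^\dagger}}_2<1/2$, which forces the rank equality via Lemma~\ref{lem:image_equality_sample}), so the explicit union bound you describe is slightly redundant, though harmless.
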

\noindent
The bound \eqref{eq:relative_eigenvalue} is similar to \cite[Theorem 2.2]{yaskov2014lower}, which holds for isotropic
random variables with finite fourth order moments. Moreover, it is possible to avoid the dependence on $d$ if the number of samples $N$
is large compared to
$
\sum_{i=1}^{d-1}\frac{\lambda_i(\VSigma)}{\lambda_i(\VSigma) - \lambda_d(\VSigma)},
$
by using relative eigenvalue bounds from \cite[Theorem 2.15]{reiss2020nonasymptotic}.

\paragraph{Numerical validation.}
To validate the results of Theorem \ref{thm:condition_free_directional_precision_matrix_estimation_arbitrary} we consider
$X \sim \CN(\boldsymbol{0},\VSigma)$ for two types of covariance matrices $\VSigma \in \bbR^{10\times 10}$:
\begin{enumerate}[label=\textsf{Setting} \arabic*:,leftmargin = \widthof{[\textsf{Setting} 2:]}]
\item Set $\VSigma\!=\!\Umat\Smat\Umat^\top\!\!$, for $\Umat\!\in\!\bbR^{10\!\times\!10}$ sampled uniformly at random from the space of orthonormal
matrices, and $\Smat\!=\! \Diag{1,1,1,1,1,\nu,\nu,\nu,\nu,\nu}$. We consider $\nu = 10^{-j+1}$ for $j \in [10]$, which
implies that the matrix condition number $\textN{\VSigma}_2\textN{\VSigma^{\dagger}}_2$
ranges from $1$ to $10^{9}$.
\item Set $\VSigma_{i,j} = \nu^{\SN{i-j}}$, with $\nu \in \{0.5,0.55,\ldots,0.9,0.95\}$. This is a common model for distributions
where entries of $X$ correspond to values of a certain feature at different time stamps. It leads to correlated entries
when the time stamps are close by, i.e. when $\SN{i-j}$ is small.
\end{enumerate}
We choose matrices $\Amat$ and $\Bmat$ by sampling an orthoprojector
$\Amat$ uniformly at random with $\rank(\Amat) = 3$ and setting
$\Bmat = \Id_D - \Amat$. We repeat each experiment $100$ times and report the averaged relative errors in Figure \ref{fig:results_precision_matrices}.
Different lines of the same color correspond to different values of $\nu$.

\begin{figure*}[t!]
    \centering
    \subfigure[Setting 1]{\includegraphics[width=0.49\textwidth]{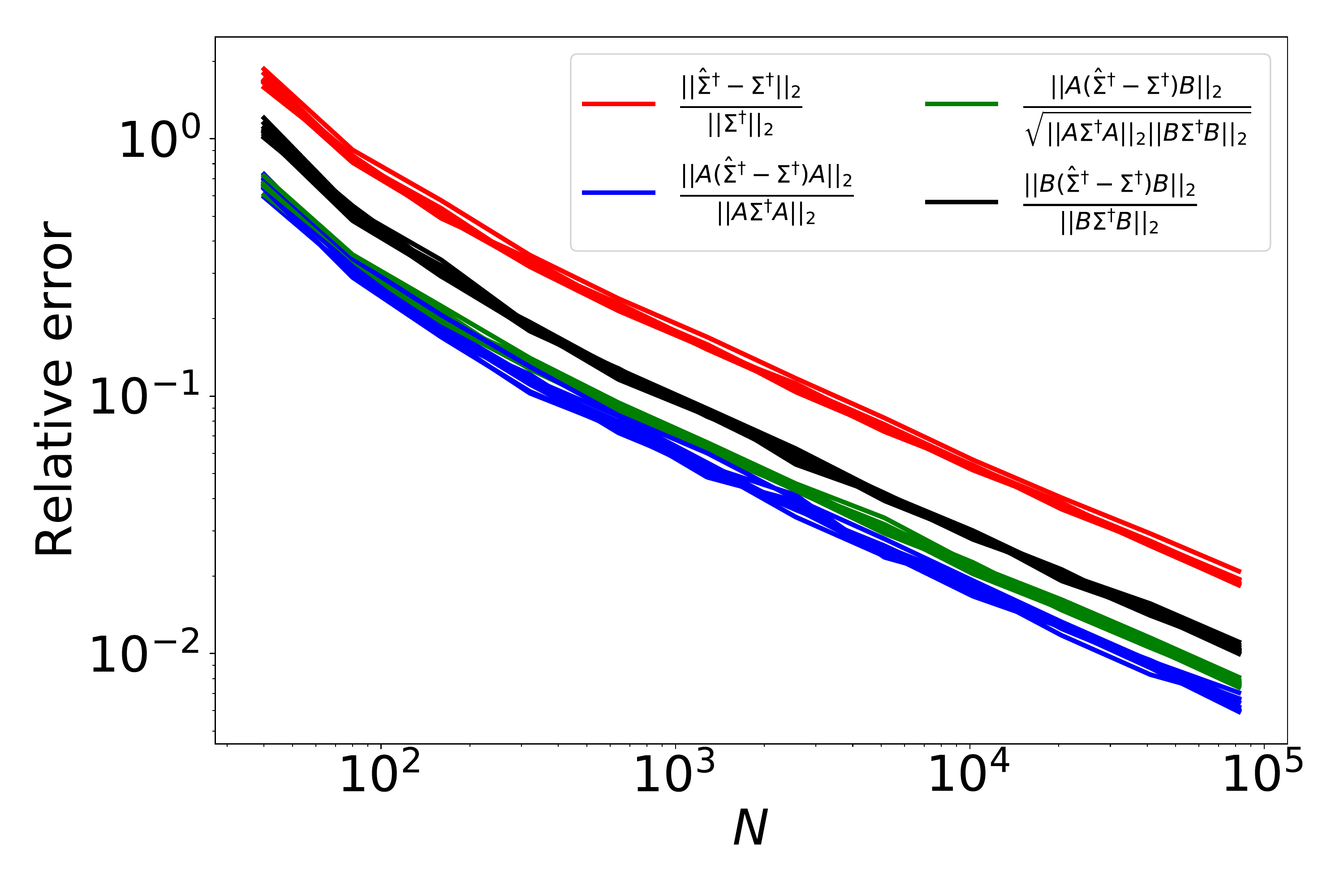}}
    \subfigure[Setting 2]{\includegraphics[width=0.49\textwidth]{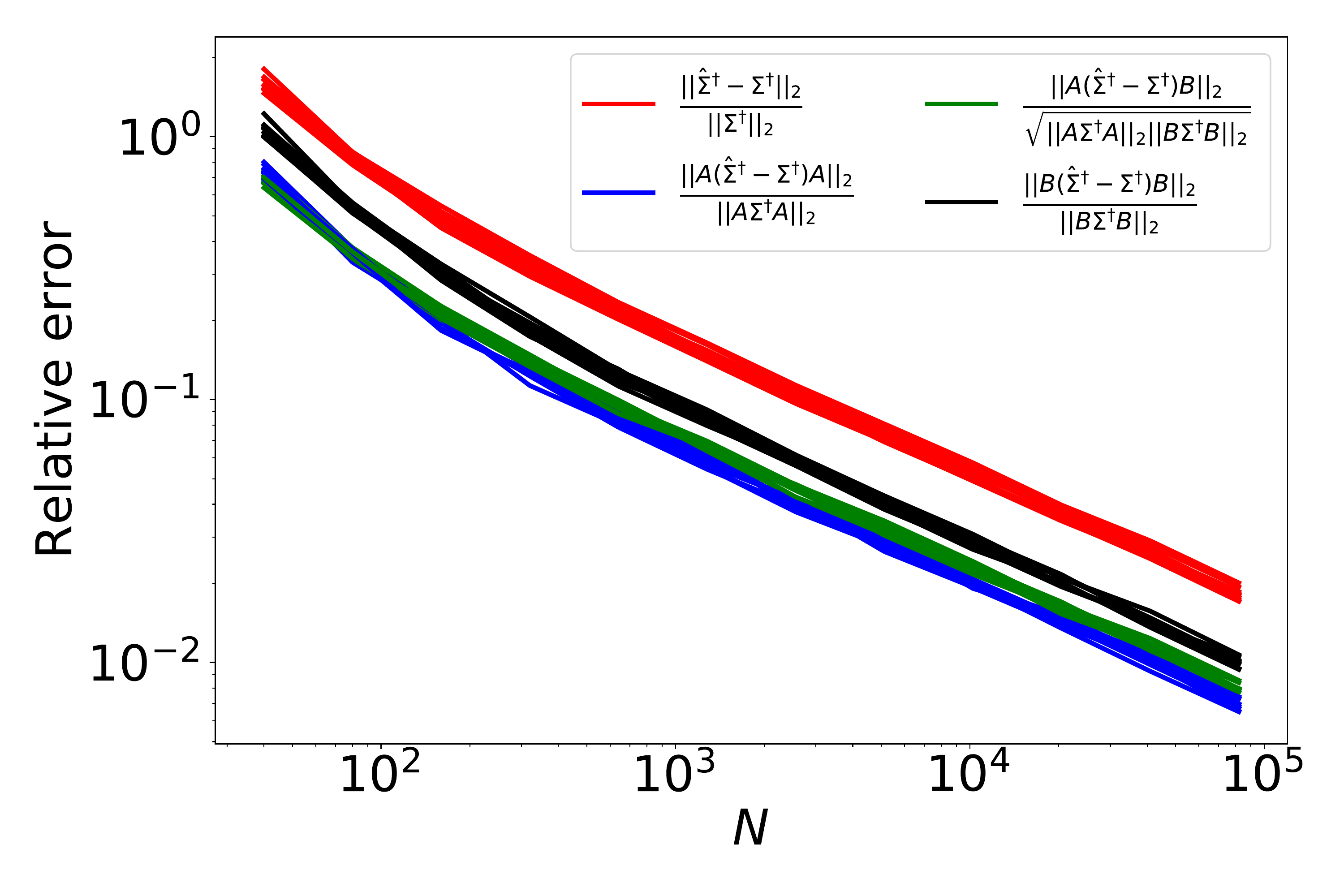}}
    \caption{Relative errors for directional and isotropic precision matrix estimation. Different colors correspond to different
    directions of evaluation (see legend). Per each color we plot $10$ lines corresponding to the $10$ parameter choices of $\nu$ in
    the construction of $\VSigma$. The plots show that the relative error does not depend on $\nu$, despite the fact that changing $\nu$ changes
    the sub-Gaussian condition number by a factor of $10$.}
    \label{fig:results_precision_matrices}
\end{figure*}

The estimation error shows the expected $N^{-1/2}$ rate.
Moreover, directional errors show  a clear dependence on the directional spectral norm.
On the other hand, since we are sampling Gaussians the squared sub-Gaussian norm is a proxy for the variance of the given random vector, and the results confirm that the error term in Theorem \ref{thm:condition_free_directional_precision_matrix_estimation_arbitrary} indeed scales with the corresponding (directional) sub-Gaussian norm.
Lastly,  although the condition number of $\VSigma$ depends on $\nu$, the specific value of $\nu$ does not affect how accurate $\hat \VSigma^{\dagger}$ is, as predicted by the theory.

\paragraph{Bounded random vectors.}
As mentioned in Remark \ref{rem:bounded}, a stronger form of direction dependent covariance and precision matrix estimation bounds hold for bounded random vectors.
The proofs for the bounded case follow along similar lines as for the sub-Gaussian case, except for the use of a slightly different probabilistic argument.
We now state only the results for the estimation of covariance and precision matrix, since the remaining bounds follow by analogy.

\begin{theorem}\label{thm:bounded_everything}
Let $\Amat \in \bbR^{d_1 \times D}, \Bmat\in \bbR^{d_2 \times D}$.
Assume  $\textN{\Amat\centerRV{X}}_2\leq C_A$, $\textN{\Bmat\centerRV{X}}_2\leq C_B$
almost surely.
Then for any $u>0$ with probability at least $1 - \exp(-u)$ we have
\begin{equation}
\label{eq:directed_covariance_estimation_bounded}
\textN{\Amat(\hat \VSigma - \VSigma) \Bmat^\top}_F \lesssim C_A C_B\sqrt{\frac{1+u}{N}}.
\end{equation}
Assume
$\textN{\Amat\VSigma^{\dagger} \centerRV{X}}_2\leq C_A^\dagger, \textN{\Bmat\VSigma^{\dagger} \centerRV{X}}_2\leq C_B^\dagger$,
$\textN{\sqrt{\VSigma^{\dagger}}\centerRV{X}}_2^2\leq \Theta $ almost surely.
There exists $C>0$ such that provided $N > C(1+u)\Theta^2$,  for any $u>0$ with probability at least $1-\exp(-u)$ we have
\begin{align}
\label{eq:cross_term_result_arbitrary_A_B_bounded}
\textN{\Amat({\hat \VSigma^{\dagger} - \VSigma^{\dagger}})\Bmat^\top}_F \lesssim C_A^\dagger C_B^\dagger\sqrt{\frac{1+u}{N}}.
\end{align}
\end{theorem}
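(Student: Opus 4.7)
The plan is to substitute the $\varepsilon$-net/sub-Gaussian arguments underlying Lemma \ref{lem:directed_covariance_estimation} with a dimension-free Hilbert-space concentration inequality tailored to the Frobenius geometry, and then to bootstrap the resulting covariance bound to the precision matrix by combining the Moore--Penrose identity with a Neumann series expansion in the standardized metric.

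For the covariance bound (\ref{eq:directed_covariance_estimation_bounded}), I first remove the sample mean via
$$\Amat(\hat\VSigma - \VSigma)\Bmat^\top = \frac{1}{N}\sum_{i=1}^{N}\bigl(\Amat\centerRV{X_i}\centerRV{X_i}^\top\Bmat^\top - \Amat\VSigma\Bmat^\top\bigr) \;-\; \Amat(\hat\Vmu_X - \bbE X)(\hat\Vmu_X - \bbE X)^\top\Bmat^\top,$$
and view the first sum as an average of iid mean-zero elements of the Frobenius Hilbert space $\bbR^{d_1 \times d_2}$. Each summand has Frobenius norm bounded a.s.\ by $\|\Amat\centerRV{X_i}\|_2\|\Bmat\centerRV{X_i}\|_2 + \|\Amat\VSigma\Bmat^\top\|_F \lesssim C_A C_B$, so Pinelis' Hilbert-space Hoeffding inequality (equivalently a dimension-free matrix Bernstein in Frobenius norm) yields the main contribution $\lesssim C_A C_B\sqrt{(1+u)/N}$ with probability $1-\exp(-u)$. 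The mean-correction is strictly lower order: a vector Hoeffding applied to the bounded vectors $\Amat\centerRV{X_i}$ and $\Bmat\centerRV{X_i}$ gives $\|\Amat(\hat\Vmu_X-\bbE X)\|_2 \lesssim C_A\sqrt{(1+u)/N}$ (and analogously for $\Bmat$), so their outer product contributes at most $C_A C_B(1+u)/N$ in Frobenius norm, which is absorbed into the stated rate.

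For the precision bound (\ref{eq:cross_term_result_arbitrary_A_B_bounded}), the first step is a standardized localization: applying (\ref{eq:directed_covariance_estimation_bounded}) with $\Amat=\Bmat=\sqrt{\VSigma^\dagger}$ (so $C_A=C_B=\sqrt{\Theta}$) gives
$$\bigl\|\sqrt{\VSigma^\dagger}(\hat\VSigma - \VSigma)\sqrt{\VSigma^\dagger}\bigr\|_2 \leq \bigl\|\sqrt{\VSigma^\dagger}(\hat\VSigma - \VSigma)\sqrt{\VSigma^\dagger}\bigr\|_F \lesssim \Theta\sqrt{\frac{1+u}{N}} \leq \tfrac{1}{2}$$
with probability $1-\exp(-u)$, once the absolute constant in $N > C(1+u)\Theta^2$ is taken sufficiently large. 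This forces $\Im(\hat\VSigma) = \Im(\VSigma)$, and setting $E := \sqrt{\VSigma^\dagger}(\hat\VSigma-\VSigma)\sqrt{\VSigma^\dagger}$ and $F := (\Pmat_{\Im(\VSigma)} + E)^{-1}$ (with $\|F\|_2 \leq 2$ by Neumann), the fact that $E$ and $F$ commute gives $\hat\VSigma^\dagger - \VSigma^\dagger = \sqrt{\VSigma^\dagger}(-E + EFE)\sqrt{\VSigma^\dagger}$ on $\Im(\VSigma)$. The leading piece $-\Amat\VSigma^\dagger(\hat\VSigma-\VSigma)\VSigma^\dagger\Bmat^\top$ equals $-\Amat(\hat\VSigma_Z - \VSigma_Z)\Bmat^\top$ for the transformed vector $Z_i := \VSigma^\dagger X_i$ (whose population covariance is $\VSigma^\dagger$), so applying (\ref{eq:directed_covariance_estimation_bounded}) to $Z$ with bounds $C_A^\dagger, C_B^\dagger$ yields the claimed rate for this term.

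The main obstacle is the remainder $R := \Amat\sqrt{\VSigma^\dagger}\,EFE\,\sqrt{\VSigma^\dagger}\Bmat^\top$; the plan is to split
$$\|R\|_F \leq \|\Amat\sqrt{\VSigma^\dagger}E\|_2 \cdot \|F\|_2 \cdot \|E\sqrt{\VSigma^\dagger}\Bmat^\top\|_F$$
and to bound each outer factor by re-applying (\ref{eq:directed_covariance_estimation_bounded}) with the mixed pairs $(\Amat\VSigma^\dagger, \sqrt{\VSigma^\dagger})$ and $(\sqrt{\VSigma^\dagger}, \Bmat\VSigma^\dagger)$, which yields $\|\Amat\sqrt{\VSigma^\dagger}E\|_2 \lesssim C_A^\dagger\sqrt{\Theta}\sqrt{(1+u)/N}$ and analogously for the other factor. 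Combined with $\|F\|_2 \leq 2$, this produces $\|R\|_F \lesssim C_A^\dagger C_B^\dagger\,\Theta\,(1+u)/N$, which is dominated by the leading term $\lesssim C_A^\dagger C_B^\dagger\sqrt{(1+u)/N}$ precisely once $N > C\Theta^2(1+u)$, since then $\Theta\sqrt{(1+u)/N} \leq 1/\sqrt{C}$. A union bound over the localization event and the three concentration events for the covariance-type deviations gives the stated probability $1-\exp(-u)$ after adjusting $C$.
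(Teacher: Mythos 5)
Your argument is essentially identical to the paper's: the same decomposition of $\hat\VSigma-\VSigma$ into the true-mean empirical covariance plus a rank-one mean-correction term, the same dimension-free Hilbert-space concentration (the paper cites Belkin–Rosasco–De Vito, Section~2.4, where you invoke Pinelis — the same tool), and for the precision matrix the same Moore--Penrose factorization $\hat\VSigma^\dagger=\sqrt{\VSigma^\dagger}(\PrSigma+E)^{-1}\sqrt{\VSigma^\dagger}$ with a Neumann expansion, followed by bounding the leading term $-\Amat\VSigma^\dagger\VDelta\VSigma^\dagger\Bmat^\top$ and the remainder $\Amat\VSigma^\dagger\VDelta\sqrt{\VSigma^\dagger}\cdot F\cdot\sqrt{\VSigma^\dagger}\VDelta\VSigma^\dagger\Bmat^\top$ by three applications of the covariance bound with the pairs $(\Amat\VSigma^\dagger,\Bmat\VSigma^\dagger)$, $(\Amat\VSigma^\dagger,\sqrt{\VSigma^\dagger})$, $(\sqrt{\VSigma^\dagger},\Bmat\VSigma^\dagger)$ plus the localization $(\sqrt{\VSigma^\dagger},\sqrt{\VSigma^\dagger})$, with the condition $N>C(1+u)\Theta^2$ used to absorb the quadratic remainder. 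No substantive difference.
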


%%%%%%%%%%%%%%%%%%%%%%%%%%%%%%%%%
%			Section 4			%
%%%%%%%%%%%%%%%%%%%%%%%%%%%%%%%%%
\section{Application to single-index model estimation}
\label{sec:application}
In this section we use the results of Sections \ref{sec:covariance_estimation} and \ref{sec:precision_matrix_estimation} to establish concentration bounds
for estimating the index vector $\abs$ in the single-index model $\bbE[Y|X=x] = g(\abs^\top x)$.
Moreover, directional terms that arise from using directional dependent matrix concentration bounds provide an insight into how to further improve the performance of the standard estimator.
The second half of the section is thus devoted to describing this strategy (which is based on splitting up the data, conditioning and averaging), proving the error estimates, and providing numerical evidence to show and examine the claimed performance gains.

Throughout the section, we use a directional sub-Gaussian condition number
\begin{align*}
\kappa(\Pmat, X) := \textN{\Pmat\VSigma^{\dagger}\centerRV{X}}_{\psi_2}^2\textN{\Pmat\centerRV{X}}_{\psi_2}^2, \quad \Pmat \textrm{ is an orthoprojector}.
\end{align*}
This quantity can be seen as a restricted condition number $\textN{\Pmat\VSigma^{\dagger}\Pmat}_2 \textN{\Pmat \VSigma \Pmat}_2$
for the matrix $\VSigma = \Covv{X}$. Indeed, for strict sub-Gaussians, i.e. those satisfying \eqref{eqn:norm_var_proxy_new} for some $K$,
the two are equal up to a factor depending on $K$.

\subsection{Ordinary least squares for the single-index model}
\label{subsec:ordinary_SIM}
Single-index model (SIM) is a popular semi-parametric regression model that poses the relationship between the features $X \in \bbR^D$  and responses $Y \in \bbR$
as $\bbE[Y|X] = f(\abs^\top X)$, where $f$ is an unknown link function and $\abs \in \bbS^{D-1}$ is an unknown index vector.
SIM was developed in the 80s and 90s \cite{brillinger1982generalized,ichimura1993semiparametric} as an extension of generalized linear regression that does not specify the link function,
and which could thus avoid errors incurred by model misspecification.
Common applications are in econometrics \cite{dobson2008introduction,mcaleer2008single} and signal processing under
sparsity assumptions on the index vector \cite{plan2016high,plan2016generalized}.
It has been shown, e.g. in  \cite{gyorfi2006distribution}, that (in certain scenarios) the minimax estimation rate of SIM equals that of nonparametric univariate regression.

Methods for estimating the SIM from a finite data set $\{(X_i,Y_i) : i \in [N]\}$ often first construct an approximate index vector
$\hat \abs$, and then use nonparametric regression on $\{(\hat \abs^\top X_i, Y_i) : i \in [N]\}$ to estimate the link function.
With such an approach the generalization error of the resulting estimator depends
largely on the error incurred by estimating the index vector. Thus, the construction of $\hat \abs$ becomes the critical point.

An efficient approach, which first appeared in
\cite{li1989regression,brillinger1982generalized}, and later in modified forms in \cite{hristache2001direct,balabdaoui2019score},
is to solve the ordinary least squares (OLS) problem
\begin{align}
\label{eq:sim_estimator}
(\hat c, \hat \bbs) = \argmin_{c \in \bbR,\ \bbs \in \bbR^D}\sum_{i=1}^{N}\left(Y_i - c - \bbs^\top(X_i- \hat\Vmu_X)\right)^2, \text{ where } \hat\Vmu_X = \sum_{i=1}^N \frac{X_i}{N},
\end{align}
and then set $\hat \dbs := \hat \bbs/\| \hat \bbs\|_{2}$.
It was shown in \cite{balabdaoui2019score} that $\sqrt{N}(\hat \dbs - \abs)$ is  asymptotically
normal with mean zero, provided $X$ has an elliptical distribution, $f$ is a non-decreasing function that is strictly increasing
on some non-empty sub-interval of the support of $\abs^\top X$.
Assuming only ellipticity of $X$ and statistical independence
of $Y$ and $X$ given $\abs^\top X$, the population vector $\bbs:=\VSigma^{\dagger}\Covv{X,Y}$
is still contained in $\Span\{\abs\}$, see e.g. \cite[Proposition 3]{klock2020estimating}.
Provided $\bbs \neq 0$, in these cases the direction $\dbs := \bbs/\N{\bbs}_2$ equals the index vector $\abs$ up to sign.
Thus, $\hat \dbs$ is in many cases a consistent estimator of the index vector $\abs$, and under certain conditions it converges with an $N^{-1/2}$ rate.

The minimal $\textN{\cdot}_2$-norm solution of \eqref{eq:sim_estimator} admits a closed form
\begin{align}
\label{eq:finite_sample_sim_estimator}
\hat c = \hat \Vmu_Y,\quad \hat \bbs = \hat \VSigma^{\dagger}\hat \rbs,\textrm{ with }
\hat \Vmu_Y \!:=\! \!\sum_{i=1}^{N} \frac{Y_i}{N},\quad \hat \rbs\!:=\!\!\sum_{i=1}^{N}\frac{(X_i - \hat \Vmu_X)(Y_i - \hat \Vmu_Y)}{N}.
\end{align}
Using the results of the previous two sections we can show a direction
dependent concentration bound for the vector $\hat \bbs$.
\begin{lemma}
\label{lem:directional_linear_regression}
Let $Y \in \bbR$ be sub-Gaussian. Denote  $\Pmat=\dbs\dbs^\top$, $\Qmat := \Id_D - \Pmat$, and $\kappa_{\Pmat\Qmat} = \kappa(\Pmat,X) \vee \kappa(\Qmat,X)$.
There exists $C>0$ such that provided $N > C(\rank(\VSigma)+u)\textN{\sqrt{\VSigma^{\dagger}}\centerRV{X}}_{\psi_2}^4$,
for any $u>0$, with probability at least $1 - \exp(-u)$ we have
\begin{align}
\label{eq:lin_reg_bounds_dir_P}
&\textN{\Pmat\big(\bbs - \hat \bbs\big)}_2 \lesssim\textN{\centerRV{Y}}_{\psi_2}\textN{\Pmat\VSigma^{\dagger}\centerRV{X}}_{\psi_2}\sqrt{\kappa_{\Pmat\Qmat}}\sqrt{\frac{\rank(\VSigma)+u}{N}},\\
\label{eq:lin_reg_bounds_dir_Q}
&\textN{\Qmat\big(\bbs - \hat \bbs\big)}_2 \lesssim\textN{\centerRV{Y}}_{\psi_2}\textN{\Qmat\VSigma^{\dagger}\centerRV{X}}_{\psi_2}\sqrt{\kappa_{\Pmat\Qmat}}\sqrt{\frac{\rank(\VSigma)+u}{N}}.
\end{align}
\end{lemma}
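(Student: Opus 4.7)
The plan is to start from the decomposition
\[
\bbs - \hat\bbs \;=\; (\VSigma^{\dagger} - \hat\VSigma^{\dagger})\rbs \;+\; \VSigma^{\dagger}(\rbs - \hat\rbs) \;+\; (\VSigma^{\dagger} - \hat\VSigma^{\dagger})(\rbs - \hat\rbs),
\]
obtained by successively adding and subtracting $\VSigma^{\dagger}\hat\rbs$ and $(\VSigma^{\dagger} - \hat\VSigma^{\dagger})\rbs$ in $\bbs - \hat\bbs = \VSigma^{\dagger}\rbs - \hat\VSigma^{\dagger}\hat\rbs$, where $\rbs := \Covv{X,Y} = \bbE[\centerRV X \centerRV Y]$. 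I would apply $\Amat \in \{\Pmat,\Qmat\}$ and control each of the three pieces on its own high-probability event, concluding \eqref{eq:lin_reg_bounds_dir_P}-\eqref{eq:lin_reg_bounds_dir_Q} by a union bound.

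For the first, leading term $\Amat(\VSigma^{\dagger} - \hat\VSigma^{\dagger})\rbs$, I would invoke Theorem \ref{thm:condition_free_directional_precision_matrix_estimation_arbitrary} with $\Bmat = \rbs^\top$. Since $\rbs^\top\VSigma^{\dagger} = \bbs^\top = \N{\bbs}_2\dbs^\top$, we get $\textN{\rbs^\top\VSigma^{\dagger}\centerRV X}_{\psi_2} = \N{\bbs}_2 \textN{\Pmat\centerRV X}_{\psi_2}$. A Cauchy--Schwarz estimate $\N{\bbs}_2 = \SN{\bbE[(\dbs^\top\VSigma^{\dagger}\centerRV X)\centerRV Y]} \lesssim \textN{\Pmat\VSigma^{\dagger}\centerRV X}_{\psi_2}\textN{\centerRV Y}_{\psi_2}$, using the standard $L^2$-to-$\psi_2$ comparison for sub-Gaussians, gives the factor $\textN{\Pmat\VSigma^{\dagger}\centerRV X}_{\psi_2}\textN{\Pmat\centerRV X}_{\psi_2} = \sqrt{\kappa(\Pmat,X)} \leq \sqrt{\kappa_{\Pmat\Qmat}}$, producing the target shape for $\Amat = \Pmat$; applying the same argument with $\Amat = \Qmat$ produces the other case.

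For the second term $\Amat\VSigma^{\dagger}(\rbs - \hat\rbs)$, I would stack the data into the joint vector $Z = (X^\top, Y)^\top \in \bbR^{D+1}$, observe that $\hat\rbs - \rbs$ appears in the off-diagonal block of $\hat\VSigma_Z - \VSigma_Z$, and apply Lemma \ref{lem:directed_covariance_estimation} to $Z$ with $\Amat' = [\Amat\VSigma^{\dagger},\mathbf 0]$ and $\Bmat' = [\mathbf 0, 1]$. One checks $\Amat'\centerRV Z = \Amat\VSigma^{\dagger}\centerRV X$, $\Bmat'\centerRV Z = \centerRV Y$, and $d_{A'}\lesssim\rank(\VSigma)$, $d_{B'}\leq 1$, so the bound reduces under the assumed regime to $\textN{\Amat\VSigma^{\dagger}\centerRV X}_{\psi_2}\textN{\centerRV Y}_{\psi_2}\sqrt{(\rank(\VSigma)+u)/N}$. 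This is absorbed into the target because operator Cauchy--Schwarz applied to $\Pmat\VSigma\Pmat$ and $\Pmat\VSigma^{\dagger}\Pmat$ (or the analogous object for $\Qmat$) implies $\sqrt{\kappa_{\Pmat\Qmat}}$ is bounded below by a universal constant.

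The third, cross term $(\VSigma^{\dagger} - \hat\VSigma^{\dagger})(\rbs - \hat\rbs)$ is purely second order. I would bound $\textN{\Amat(\hat\VSigma^{\dagger} - \VSigma^{\dagger})}_2$ by Theorem \ref{thm:condition_free_directional_precision_matrix_estimation_arbitrary} with $\Bmat = \Id_D$, giving $\textN{\Amat\VSigma^{\dagger}\centerRV X}_{\psi_2}\textN{\VSigma^{\dagger}\centerRV X}_{\psi_2}\sqrt{(\rank(\VSigma)+u)/N}$, and $\N{\rbs - \hat\rbs}_2$ by the joint-vector application of Lemma \ref{lem:directed_covariance_estimation} with $\Amat'=[\Id_D,\mathbf 0]$, giving $\textN{\centerRV X}_{\psi_2}\textN{\centerRV Y}_{\psi_2}\sqrt{(\rank(\VSigma)+u)/N}$. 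Their product scales as $(\rank(\VSigma)+u)/N$; the assumption $N > C(\rank(\VSigma)+u)\textN{\sqrt{\VSigma^{\dagger}}\centerRV X}_{\psi_2}^4$ is used to peel off one $\sqrt{(\rank(\VSigma)+u)/N}$ factor and replace the remaining one by $\textN{\sqrt{\VSigma^{\dagger}}\centerRV X}_{\psi_2}^{-2}$.

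The hard part will be this last absorption step: I need to show that the residual ratio $\textN{\VSigma^{\dagger}\centerRV X}_{\psi_2}\textN{\centerRV X}_{\psi_2}/\textN{\sqrt{\VSigma^{\dagger}}\centerRV X}_{\psi_2}^{2}$ is controlled by $\sqrt{\kappa_{\Pmat\Qmat}}$. A naive splitting $\centerRV X = \Pmat\centerRV X + \Qmat\centerRV X$ together with $\N{\vbs + \ubs}_{\psi_2}^2 \lesssim \N{\vbs}_{\psi_2}^2 + \N{\ubs}_{\psi_2}^2$ only relates this ratio to the max of the four products $\textN{\square_1\VSigma^{\dagger}\centerRV X}_{\psi_2}\textN{\square_2\centerRV X}_{\psi_2}$ with $\square_i \in \{\Pmat,\Qmat\}$, whereas $\kappa_{\Pmat\Qmat}$ only involves the diagonal pairs $(\Pmat,\Pmat)$ and $(\Qmat,\Qmat)$. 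Closing this gap requires exploiting the rank-one structure of $\Pmat = \dbs\dbs^\top$ jointly with the variance-proxy role played by the standardized vector $\sqrt{\VSigma^{\dagger}}\centerRV X$, so that the mixed products $(\Pmat,\Qmat)$ and $(\Qmat,\Pmat)$ can be re-expressed or dominated in terms of the diagonal ones.
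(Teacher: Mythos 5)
Your decomposition is a slight variant of the paper's: the paper keeps $\hat\VSigma^\dagger$ intact in the second term, writing
\begin{align*}
\Amat(\bbs - \hat\bbs) = \Amat(\VSigma^\dagger - \hat\VSigma^\dagger)(\Pmat + \Qmat)\rbs + \Amat\hat\VSigma^\dagger(\Pmat + \Qmat)(\rbs - \hat\rbs),
\end{align*}
and only afterwards expands $\Amat\hat\VSigma^\dagger\Bmat = \Amat\VSigma^\dagger\Bmat + \Amat(\hat\VSigma^\dagger - \VSigma^\dagger)\Bmat$, whereas you peel off the first-order piece and expose a separate, \emph{undirected} second-order cross term. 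Your treatment of the first two pieces is sound: the Cauchy--Schwarz bound on $\N{\bbs}_2$ for the leading term matches the target, and the stacked-vector application of Lemma \ref{lem:directed_covariance_estimation} for $\rbs - \hat\rbs$ is a valid substitute for the paper's Lemma \ref{lem:concentration_of_r} (the paper also exploits $\rbs - \hat\rbs \in \Im(\VSigma)$ to get the $\rank(\VSigma)$ dependence). The bound $\kappa_{\Pmat\Qmat} \gtrsim 1$ used to absorb the second term also holds, via $\kappa(\Pmat,X)\gtrsim (\dbs^\top\VSigma^\dagger\dbs)(\dbs^\top\VSigma\dbs)\geq 1$.

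The gap you flag for the cross term is genuine, and the absorption you worry about does indeed fail. Take $\VSigma = \Diag{1,\eta^2}$ in $\bbR^2$, $X = \sqrt{\VSigma}\,g$ with $g\sim\CN(\boldsymbol 0,\Id_2)$, and $\dbs = e_1$. Then, up to universal constants, $\textN{\VSigma^\dagger\centerRV X}_{\psi_2}\asymp \eta^{-1}$, $\textN{\centerRV X}_{\psi_2}\asymp 1$, and $\textN{\sqrt{\VSigma^\dagger}\centerRV X}_{\psi_2}\asymp 1$, so your residual ratio scales as $\eta^{-1}$; yet $\kappa(\Pmat,X)\asymp\kappa(\Qmat,X)\asymp 1$, so $\sqrt{\kappa_{\Pmat\Qmat}}\asymp 1$ cannot dominate $\eta^{-1}$ as $\eta\to 0$. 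The obstruction is precisely the crude operator-norm product $\textN{\Amat(\hat\VSigma^\dagger - \VSigma^\dagger)}_2\textN{\rbs - \hat\rbs}_2$, which decouples the two factors from the underlying directions. The repair is the same device you already used for the leading term, and is what the paper's decomposition buys for free: insert $\Pmat + \Qmat$ \emph{between} the two factors, i.e.
\begin{align*}
\textN{\Amat(\hat\VSigma^\dagger - \VSigma^\dagger)(\rbs - \hat\rbs)}_2 \leq \textN{\Amat(\hat\VSigma^\dagger - \VSigma^\dagger)\Pmat}_2\textN{\Pmat(\rbs - \hat\rbs)}_2 + \textN{\Amat(\hat\VSigma^\dagger - \VSigma^\dagger)\Qmat}_2\textN{\Qmat(\rbs - \hat\rbs)}_2.
\end{align*}
Each summand is a product of two directional bounds and contributes $\textN{\Amat\VSigma^\dagger\centerRV X}_{\psi_2}\textN{\centerRV Y}_{\psi_2}\sqrt{\kappa_{\Pmat\Qmat}}\cdot(\rank(\VSigma)+u)/N$; one factor of $\sqrt{(\rank(\VSigma)+u)/N}\lesssim\textN{\sqrt{\VSigma^\dagger}\centerRV X}_{\psi_2}^{-2}\lesssim 1$ is then dropped using the sample-size assumption, without any need for the problematic ratio estimate. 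For the record, the paper bounds $\textN{\Pmat\hat\VSigma^\dagger\Bmat}_2 \leq \textN{\Pmat\VSigma^\dagger\Bmat}_2 + \textN{\Pmat(\hat\VSigma^\dagger - \VSigma^\dagger)\Bmat}_2$ for $\Bmat\in\{\Pmat,\Qmat\}$, handles the deterministic piece via Lemma \ref{lem:sem_pos_def_quadratic_form} and property \ref{pro:cross_cov_psi2} of Lemma \ref{lem:subGaussian_properties} applied to $\VSigma^\dagger = \VSigma^\dagger\VSigma\VSigma^\dagger$, and the random piece via Theorem \ref{thm:condition_free_directional_precision_matrix_estimation_arbitrary}; this is exactly your decomposition after the fix.
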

\noindent
For the normalized vector, respectively the directions, the following bound holds.
\begin{corollary}
\label{cor:directional_normalized_linear_regression}
Assume the setting of Lemma \ref{lem:directional_linear_regression}.
There exists a constant $C>0$ such that
for any $u>0$, with probability at least $1-\exp(-u)$  we have
\begin{align}
\label{eq:normed_lin_reg_bounds_dir}
&\N{\hat \dbs - \dbs}_2 \lesssim\frac{\textN{\centerRV{Y}}_{\psi_2}\textN{\Qmat\VSigma^{\dagger}\centerRV{X}}_{\psi_2}\sqrt{
\kappa_{\Pmat\Qmat}}}{\N{\bbs}_2}\sqrt{\frac{\rank(\VSigma)+u}{N}},\quad \textrm{provided that }\\
\label{eq:normed_lin_reg_bounds_dir_N_requirement}
&N > C(\rank(\VSigma)+u)\bigg(\textN{\sqrt{\VSigma^{\dagger}}\centerRV{X}}_{\psi_2}^4 \vee \frac{\textN{\centerRV{Y}}_{\psi_2}^2\textN{\Pmat\VSigma^{\dagger}\centerRV{X}}_{\psi_2}^2\kappa_{\Pmat\Qmat}}{\N{\bbs}^2_2}\bigg).
\end{align}
\end{corollary}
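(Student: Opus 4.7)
The plan is to reduce the normalized estimation error $\|\hat\dbs - \dbs\|_2$ to the directional bound \eqref{eq:lin_reg_bounds_dir_Q} by exploiting the fact that $\bbs$ lies on the line spanned by $\dbs$. Writing $\Pmat = \dbs\dbs^\top$ and $\Qmat = \Id_D - \Pmat$, I would first decompose $\hat\bbs = \alpha\dbs + \beta\wbs$ with $\wbs \perp \dbs$, $\|\wbs\|_2 = 1$, and $\beta \geq 0$. Since $\Pmat\bbs = \bbs$ and $\Qmat\bbs = 0$, one has $\alpha = \dbs^\top\hat\bbs$, $\beta = \|\Qmat\hat\bbs\|_2 = \|\Qmat(\hat\bbs - \bbs)\|_2$, and $\|\hat\bbs\|_2 = \sqrt{\alpha^2+\beta^2}$. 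A short direct computation then gives
\begin{equation*}
\|\hat\dbs - \dbs\|_2^2 \;=\; \left(1-\frac{\alpha}{\sqrt{\alpha^2+\beta^2}}\right)^{\!2} + \frac{\beta^2}{\alpha^2+\beta^2} \;=\; 2\left(1-\sqrt{1-\rho^2}\right) \;\leq\; 2\rho^2,
\end{equation*}
with $\rho := \beta/\sqrt{\alpha^2+\beta^2}$, provided $\alpha \geq 0$. Hence the key inequality $\|\hat\dbs - \dbs\|_2 \leq \sqrt{2}\,\|\Qmat(\hat\bbs - \bbs)\|_2/\|\hat\bbs\|_2$.

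Next I would lower bound $\|\hat\bbs\|_2$. Since $\Pmat\bbs = \bbs$, one has $|\alpha - \|\bbs\|_2| = |\dbs^\top(\hat\bbs - \bbs)| \leq \|\Pmat(\hat\bbs - \bbs)\|_2$. Applying \eqref{eq:lin_reg_bounds_dir_P} under the second branch of the sample-size requirement \eqref{eq:normed_lin_reg_bounds_dir_N_requirement} — which is exactly arranged so that the right-hand side of \eqref{eq:lin_reg_bounds_dir_P} is bounded by $\|\bbs\|_2/2$ — yields $\alpha \geq \|\bbs\|_2/2 > 0$ (also justifying the sign assumption above), and consequently $\|\hat\bbs\|_2 \geq \|\bbs\|_2/2$ on the same event.

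Combining the two steps on the intersection of the events of Lemma \ref{lem:directional_linear_regression} (a union bound over \eqref{eq:lin_reg_bounds_dir_P} and \eqref{eq:lin_reg_bounds_dir_Q}, with the probability adjusted by rescaling the universal constant) gives
\begin{equation*}
\|\hat\dbs - \dbs\|_2 \;\leq\; \frac{2\sqrt{2}\,\|\Qmat(\hat\bbs - \bbs)\|_2}{\|\bbs\|_2} \;\lesssim\; \frac{\|\centerRV{Y}\|_{\psi_2}\,\|\Qmat\VSigma^{\dagger}\centerRV{X}\|_{\psi_2}\sqrt{\kappa_{\Pmat\Qmat}}}{\|\bbs\|_2}\sqrt{\frac{\rank(\VSigma)+u}{N}},
\end{equation*}
which is exactly \eqref{eq:normed_lin_reg_bounds_dir}. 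The first branch $N > C(\rank(\VSigma)+u)\|\sqrt{\VSigma^{\dagger}}\centerRV{X}\|_{\psi_2}^4$ of \eqref{eq:normed_lin_reg_bounds_dir_N_requirement} is inherited from the hypothesis of Lemma \ref{lem:directional_linear_regression}, so no additional work is needed there.

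No step looks genuinely difficult: the trigonometric identity for $\|\hat\dbs-\dbs\|_2$ is routine, and the sample-size condition has been engineered to make the denominator $\|\hat\bbs\|_2$ concentrate around $\|\bbs\|_2$. The only care required is bookkeeping — making sure the two events from \eqref{eq:lin_reg_bounds_dir_P} and \eqref{eq:lin_reg_bounds_dir_Q} are handled simultaneously, and that the constant in the $N$-requirement absorbs the factor of $2$ used in the lower bound on $\alpha$.
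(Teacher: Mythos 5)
Your proof is correct and takes essentially the same approach as the paper. Both arguments reduce to the key inequality $\|\hat\dbs-\dbs\|_2\leq\sqrt{2}\,\|\Qmat(\hat\bbs-\bbs)\|_2/\|\hat\bbs\|_2$ (the paper reaches it by a $\Pmat/\Qmat$ Pythagoras split of $\|\hat\dbs-\dbs\|_2^2$, you by a direct two-dimensional expansion in $(\alpha,\beta)$ --- equivalent algebra), and then lower-bound $\|\hat\bbs\|_2$ by $\|\bbs\|_2/2$ using $\|\Pmat(\hat\bbs-\bbs)\|_2\leq\|\bbs\|_2/2$, which follows from the second branch of \eqref{eq:normed_lin_reg_bounds_dir_N_requirement} together with Lemma \ref{lem:directional_linear_regression}, exactly as in the paper.
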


\noindent
As mentioned before, under certain conditions we have $\dbs = \abs$, where $\abs$ is the index vector in the given SIM.
In such cases Corollary \ref{cor:directional_normalized_linear_regression} confirms that $\hat\dbs$ is a consistent estimator of $\abs$ and achieves a $N^{-1/2}$ convergence rate, which has been observed in previous works.
To provide some context for our result we give a brief
description of the two most popular strategies for estimating $\abs$.

The first group of methods can be distinguished by their simplicity and efficiency
as they estimate the index vector using empirical estimates of
first and second order moments of random variables $X$ and $Y$. The studied OLS estimator (sometimes
also referred to as the average derivative estimator \cite{brillinger1982generalized,li1989regression})
belongs to this group. Inverse regression based techniques also
fall into this category, see e.g. \cite{li1991sliced,dennis2000save,li2007directional} or the review \cite{ma2013review},
and their convergence rate usually equals $N^{-1/2}$ and is thus on par with OLS.
A drawback caused essentially by the simplicity of these methods is that theoretical guarantees typically require $X$ to be an elliptical distribution, and
sometimes a form of monotonicity is needed to avoid issues that arise when dealing with symmetric functions such as $\xbs\mapsto (\abs^\top \xbs)^2$.

The second, more sophisticated but also computationally heavier, class of methods is based on
solving more complicated optimal programs for the recovery of $\abs$, to which a closed form solution does not exist. This includes non-parametric methods for sufficient dimension reduction, e.g. \cite{dalalyan2008new}, aiming
at estimating the gradients of $f(\abs^\top X)$ at all training samples $X_i$; methods based on combining
index estimation and monotonic regression, e.g. \cite{kalai2009isotron,kakade2011efficient};
and methods that simultaneously estimate the index vector and use spline interpolation \cite{radchenko2015high,kuchibhotla2020efficient}, to name just a few.
For these methods convergence rates up to $N^{-1/2}$ (sometimes slower) can typically be proven, and
the theory requires less stringent assumptions than those exhibited by the first class of methods.

A new insight in Corollary \ref{cor:directional_normalized_linear_regression}, which has
not been emphasized before, is the directionally-dependent influence of the spectrum of $\VSigma$ on the estimation guarantee.
To make this more precise, we now consider a special case when $X$ is strictly sub-Gaussian,
the link function $f$ is Lipschitz-smooth, and the index $\abs$ is an eigenvector of $\VSigma$. We note that similar results hold
if $\abs$ is an approximate eigenvector of $\VSigma$, in the sense that
$\abs^\top \VSigma^{\dagger}\abs  \approx (\abs^\top \VSigma \abs)^{-1}$.

\begin{corollary}
\label{cor:motivation_slicing_and_averaging}
Let $Y$ be sub-Gaussian, $X$ be strictly sub-Gaussian (i.e. satisfying \eqref{eqn:norm_var_proxy_new} for some $K > 0$)
and assume $Y = f(\abs^\top X) + \zeta$ for $\bbE\zeta = 0$ with $\sigma_{\zeta}:=\textN{\zeta}_{\psi_2}<\infty$.
Assume consistent estimation, i.e. $\dbs = \abs$, and that $\VSigma \abs = \sigma_{\Pmat}^2 \abs$.
Then there exists $C_K>0$, depending only on $K$, so that if
$$N > C_K(\rank(\VSigma)+u)\frac{(L+\sigma_{\zeta}\sigma_{\Pmat}^{-1})^2\kappa(\Qmat,X)}{\N{\bbs}_2^{2}},$$
for any $u>0$ we have with probability at least $1-\exp(-u)$
\begin{align}
\label{eq:normed_lin_reg_bounds_dir_modified}
&\N{\hat \dbs - \abs}_2 \lesssim \frac{\left(L\sigma_{\Pmat} + \sigma_{\zeta}\right)\sqrt{\N{\Qmat\VSigma^{\dagger}\Qmat}_2\kappa(\Qmat,X)}}{\N{\bbs}_2}\sqrt{\frac{\rank(\VSigma)+u}{N}}.
\end{align}
\end{corollary}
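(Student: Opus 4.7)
The plan is to specialize Corollary \ref{cor:directional_normalized_linear_regression} to the orthoprojectors $\Pmat = \abs\abs^\top$ and $\Qmat = \Id_D - \Pmat$ dictated by the eigenvector hypothesis $\VSigma\abs = \sigma_\Pmat^2\abs$. Since $\dbs = \abs$ by the consistency assumption, \eqref{eq:normed_lin_reg_bounds_dir} directly yields a bound on $\N{\hat\dbs - \abs}_2$, and the task reduces to simplifying the three factors $\textN{\centerRV Y}_{\psi_2}$, $\textN{\Qmat\VSigma^{\dagger}\centerRV X}_{\psi_2}$, and $\sqrt{\kappa_{\Pmat\Qmat}}$ appearing on the right-hand side, together with the sample size requirement \eqref{eq:normed_lin_reg_bounds_dir_N_requirement}, under the extra structure (Lipschitz $f$, strictly sub-Gaussian $X$, and $\abs$ an eigenvector of $\VSigma$).

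For the first factor I would write $Y = f(\abs^\top X) + \zeta$ and apply the triangle inequality for the $\psi_2$ norm together with the standard fact that a centered $L$-Lipschitz image of a sub-Gaussian variable is sub-Gaussian with norm scaled by $L$. Since $\Var(\abs^\top X) = \abs^\top\VSigma\abs = \sigma_\Pmat^2$, strict sub-Gaussianity \eqref{eqn:norm_var_proxy_new} then gives $\textN{\centerRV Y}_{\psi_2} \lesssim_K L\sigma_\Pmat + \sigma_\zeta$. The eigenvector assumption forces $\VSigma^{\dagger}\abs = \sigma_\Pmat^{-2}\abs$, so that $\Pmat\VSigma^{\dagger} = \sigma_\Pmat^{-2}\Pmat$; combined again with \eqref{eqn:norm_var_proxy_new} this yields $\textN{\Pmat\VSigma^{\dagger}\centerRV X}_{\psi_2} \leq \sqrt{K}/\sigma_\Pmat$, $\textN{\Pmat\centerRV X}_{\psi_2} \leq \sqrt{K}\sigma_\Pmat$, and hence $\kappa(\Pmat,X) \leq K^2$. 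For the orthogonal direction, the Moore--Penrose identity $\VSigma^{\dagger}\VSigma\VSigma^{\dagger}=\VSigma^{\dagger}$ simplifies $\Covv{\Qmat\VSigma^{\dagger}X}$ to $\Qmat\VSigma^{\dagger}\Qmat$, so \eqref{eqn:norm_var_proxy_new} gives $\textN{\Qmat\VSigma^{\dagger}\centerRV X}_{\psi_2}^2 \leq K\textN{\Qmat\VSigma^{\dagger}\Qmat}_2$.

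The main bookkeeping step is to replace $\kappa_{\Pmat\Qmat}$ by $\kappa(\Qmat,X)$ alone. This follows from the absolute lower bound $\kappa(\Qmat,X) \gtrsim 1$: the general inequality $\textN{Z}_{\psi_2}^2 \gtrsim \textN{\Covv Z}_2$ combined with the elementary bound $(u^\top\VSigma u)(u^\top\VSigma^{\dagger}u) \geq 1$ for any unit $u\in\Im(\VSigma)$ (Cauchy--Schwarz applied to $\VSigma^{1/2}u$ and $\VSigma^{-1/2}u$) forces $\textN{\Qmat\VSigma^{\dagger}\centerRV X}_{\psi_2}^2\textN{\Qmat\centerRV X}_{\psi_2}^2 \gtrsim 1$. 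Hence $\kappa_{\Pmat\Qmat} = \kappa(\Pmat,X)\vee\kappa(\Qmat,X) \lesssim_K \kappa(\Qmat,X)$. Substituting all bounds into \eqref{eq:normed_lin_reg_bounds_dir} produces the advertised estimate \eqref{eq:normed_lin_reg_bounds_dir_modified}, while \eqref{eq:normed_lin_reg_bounds_dir_N_requirement} reduces to the stated sample-size condition after noting that $\textN{\sqrt{\VSigma^{\dagger}}\centerRV X}_{\psi_2}^4\leq K^2$ under \eqref{eqn:norm_var_proxy_new} and that the second term in the maximum simplifies to $\lesssim_K (L+\sigma_\zeta\sigma_\Pmat^{-1})^2\kappa(\Qmat,X)/\N{\bbs}_2^2$, with all $K$-dependent constants absorbed into $C_K$. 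The hard part is purely algebraic: cleanly tracking where each factor of $K$ enters so that the final expression takes the stated form. The probabilistic content is imported wholesale from Corollary \ref{cor:directional_normalized_linear_regression}.
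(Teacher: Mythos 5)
Your proof follows essentially the same route as the paper's: specialize Corollary~\ref{cor:directional_normalized_linear_regression} to $\Pmat=\abs\abs^\top$, $\Qmat=\Id_D-\Pmat$, simplify the $\psi_2$-norms using the eigenvector identity $\Pmat\VSigma^\dagger=\sigma_\Pmat^{-2}\Pmat$ and strict sub-Gaussianity, and absorb $K$-dependent constants. There are two small points where your write-up is cleaner or more explicit than the paper's. First, to bound $\textN{f(\abs^\top X)-\bbE f(\abs^\top X)}_{\psi_2}$ the paper invokes the bounded-difference inequality for unbounded metric spaces of Kontorovich, whereas you use the elementary pointwise domination $\lvert f(\abs^\top X)-f(\bbE\,\abs^\top X)\rvert\leq L\lvert\abs^\top\centerRV X\rvert$ together with monotonicity of the Orlicz norm and the centering property of Lemma~\ref{lem:subGaussian_properties}; your version is the more elementary and entirely sufficient argument for scalar-valued $f$. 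Second, the paper asserts $\kappa(\Pmat,X)\lesssim K^2$ and jumps to the conclusion, silently replacing $\kappa_{\Pmat\Qmat}$ by $\kappa(\Qmat,X)$; you explicitly supply the missing ingredient $\kappa(\Qmat,X)\gtrsim 1$ (via $\textN{Z}_{\psi_2}^2\gtrsim\textN{\Covv Z}_2$ and the Cauchy--Schwarz bound $(u^\top\VSigma u)(u^\top\VSigma^\dagger u)\geq 1$ for unit $u\in\Im(\VSigma)$), which is exactly what is needed to conclude $\kappa_{\Pmat\Qmat}\lesssim_K\kappa(\Qmat,X)$. This implicitly assumes $\Im(\Qmat)\cap\Im(\VSigma)\neq\{0\}$, i.e.\ $\rank(\VSigma)\geq 2$, a benign non-degeneracy condition the paper also tacitly requires. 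Both proofs then finish by plugging the simplified quantities into \eqref{eq:normed_lin_reg_bounds_dir} and checking that \eqref{eq:normed_lin_reg_bounds_dir_N_requirement} reduces to the stated sample-size condition.
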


Corollary \ref{cor:motivation_slicing_and_averaging} shows that the variance in the direction of the index vector,
$\sigma_\Pmat^2 = \textVar{\abs^\top X}$, and the variance in orthogonal directions, quantified through the spectrum of $\Qmat\VSigma^{\dagger} \Qmat$, influence index vector estimation in a significantly different manner.
Namely, as long as $\sigma_\Pmat \gg \sigma_{\zeta}$,
smaller $\sigma_\Pmat$ has a provably beneficial effect on estimation accuracy, whereas small
non-zero eigenvalues of $\VSigma$ (i.e. large eigenvalues of $\VSigma^\dagger$), corresponding to eigenvectors in $\Im(\Qmat)$, can only worsen the accuracy.
A similar observation can be made when inspecting the asymptotic covariance
of $\sqrt{N}(\hat \dbs - \abs)$, which after using $\VSigma \abs = \sigma_\Pmat^2 \abs$ and \cite[Theorem 1]{balabdaoui2019score}, is given by
$$
\underbrace{\frac{\sigma_\Pmat^2}{\Covv{f(\abs^\top X),\abs^\top X}}}_{=:T_1}\underbrace{\Qmat \VSigma^{\dagger}\Qmat\Covv{(\tilde Y - \tilde X^\top \bbs)\tilde X} \Qmat\VSigma^{\dagger}\Qmat}_{=:T_2}.
$$
The scalar factor $T_1$ is comparable to $\N{\bbs}_2^2$,
and the matrix $T_2$ shows that the variance of the estimator grows with large eigenvalues
of $\Qmat \VSigma^{\dagger}\Qmat$.
On the other hand, the variance of the residual $\tilde Y - \tilde X^\top \bbs$ decreases
if the accuracy of the linear fit of $\tilde X^\top \bbs \approx f(\abs^\top X)-\bbE f(\abs^\top X)$ improves,
which can typically be observed under monotonic links $f$ and decreasing $\sigma_\Pmat^2 = \textVar{\abs^\top X}$.

We will now use this observation as a guiding principle for developing a modified estimator
that splits the data into subsets with small $\sigma_\Pmat$'s, computes corresponding OLS vectors on each subset,
and then combines local estimators into a global estimator by weighted averaging.

\subsection{Averaged conditional least squares for the single-index model}
\label{subsec:conditional_SIM}
\begin{figure*}[t!]
    \centering
    \includegraphics[width=0.7\textwidth]{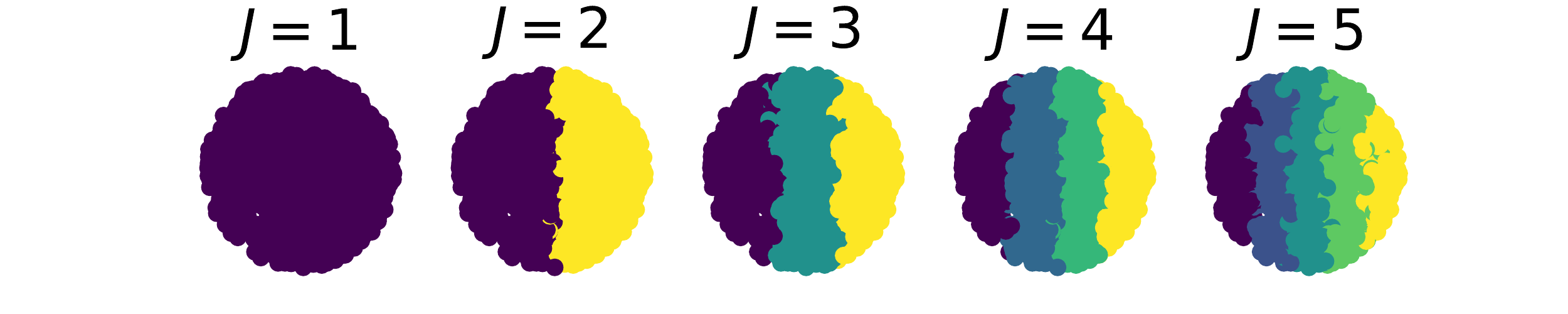}
    \caption{$(X,Y)$ sampled according to $X \sim \textrm{Uni}(\{X : \N{X}_2\leq 1\})$ and $Y = f(\abs^\top X) + \zeta$,
    where $\Span{\{\abs\}}$ is the horizontal line, and $\zeta \sim \CN({0},0.01\Var{f(\abs^\top X)})$. We use a dyadic level-set partitioning of the data into $J$ sub-intervals. The color indicates the labeling.}
    \label{fig:levelset_details_sigmoid}
\end{figure*}

We now study the just discussed alternative procedure, where we first split the data into
subsets, aiming to reduce the variance of the data distribution in the direction of the index vector, and then compute and average out the estimators
from each subset. Since we have no  a priori knowledge about the index vector, constructing such a partition seems challenging.
However, when the link function is monotonic the partitioning is induced by a decomposition
of $\Im(Y)$ into equisized intervals, see Figure \ref{fig:levelset_details_sigmoid}.
For the sake of simplicity, in the following we assume $Y \in [0,1)$ holds almost surely.

For $\ell \in[J]$ let $\CR_{J,\ell} := [\frac{\ell-1}{J},\frac{\ell}{J})$ denote equisized regions partitioning $[0,1)$.
Furthermore, define herein called level-sets $\CS_{J,\ell} = \{(X_i, Y_i) : Y_i \in \CR_{J,\ell}\}$,
which induce a partition of the data set into $J$ subsets based on the responses. Then estimate
$\abs$ according to the following algorithm.
\begin{enumerate}[label=\textsf{Step }\arabic*,leftmargin=\widthof{[\textsf{Step}2)]}]
\item\label{step:1} Solve \eqref{eq:finite_sample_sim_estimator} on each subset $\CS_{J,\ell}$, by computing $\hat c_{J,\ell}\in\bbR$ and $\hat \bbs_{J,\ell}\in\bbR^D$.
\item\label{step:2} Define the empirical density $\hat \rho_{J,\ell} := \SN{\CS_{J,\ell}}/N$, set the thresholding parameter $\alpha > 0$, and compute the averaged outer product matrix
\begin{align}
\label{eq:def_MJ_SIM}
\hat \Mmat_J = \sum_{\ell = 1}^{J} \mathbbm{1}_{[\alpha J^{-1}, 1]}(\hat \rho_{J,\ell})\hat\rho_{J,\ell}\hat \bbs_{J,\ell}\hat \bbs_{J,\ell}^\top.
\end{align}
\item\label{step:3} Use the eigenvector corresponding to the largest eigenvalue of $\Mmat_J$, denoted as $\ubs_1(\hat \Mmat_J)$, as an approximation of the index vector $\abs$.
\end{enumerate}

The parameter $\alpha$ is used to promote numerical stability by suppressing the contributions of sparsely populated subsets.
In other words, we only keep those level-sets whose empirical mass behaves as if $Y$ were uniformly distributed over $\Im(Y)$ (which can in some problems be achieved by a suitable transformation of the responses).
The parameter $J$ on the other hand defines the number of sets we use in the partition of the given data set,
and dictates the trade-off between $\Var{\abs^\top X|Y \in \CR_{J,\ell}}$
and the number of samples $\lvert \CS_{J,\ell}\rvert$ in a given level set.

For a random vector $Z$ we denote a conditional random vector $Z_{J,\ell}:=Z|Y\in\CR_{J,\ell}$.
Note that $Z_{J,\ell}$ inherits sub-Gaussianity of $Z$ provided
$\bbP(Y \in \CR_{J,\ell}) > 0$, see Lemma \ref{lem:subG_conditional}.
We now  analyze the approach under the following assumption:
\begin{enumerate}[label=(\Alph*),leftmargin=\widthof{[(A)]}]
\item\label{ass:alignedness} $\bbs_{J,\ell}\!:=\! \Covv{X|Y \in \CR_{J,\ell}}^\dagger\Covv{X,Y|Y \in \CR_{J,\ell}}\in \Span\{\abs\}$ for all $\ell \in [J]$.
\end{enumerate}
Assumption \ref{ass:alignedness} is not particularly restrictive.
For example, it can be shown that \ref{ass:alignedness}
holds if $X$ is elliptically symmetric, which is a standard assumption when using the OLS
functional \eqref{eq:finite_sample_sim_estimator}, and if the function noise $Y-\bbE[Y|X]$ is statistically independent of $X$ given $\abs^\top X$ \cite[Proposition 3]{klock2020estimating}.

\begin{theorem}
\label{thm:conditional_sim_estimator}
Assume \ref{ass:alignedness} holds and that $Y \in [0,1)$ almost surely. Let $J > 0$, $\alpha > 0$, and assume we
are given $N$ iid. copies of $(X,Y)$. Denote $\Pmat := \abs\abs^\top$, $\Qmat := \Id_D - \Pmat$, $\VSigma_{J,\ell} := \Covv{X|Y \in \CR_{J,\ell}}$,
and
\[
\kappa_{J,\ell}\!:=\!\kappa(\Pmat,X_{J,\ell}) \vee \kappa(\Qmat, X_{J,\ell}) ,\textrm{ and } K_{J,\ell} :=\! \big\|{\sqrt{\VSigma_{J,\ell}^{\dagger}}\centerRV{X}_{J,\ell}}\big\|_{\psi_2}\!.
\]
Let $I_J:= \{\ell \in [J] : \hat \rho_{J,\ell} >\alpha J^{-1}\}$ be the index set containing the indices of active level-sets
and set $K_J := \max_{\ell \in I_J}K_{J,\ell}$.
There exists $C > 0$ such that, if
\begin{equation}
\label{eq:N_requirement_last}
N > CK_J^4\frac{J(\rank(\VSigma)+\log(J)+u)}{\alpha},
\end{equation}
there exists a sign $s \in \{-1,1\}$ so that for any $u>0$, with probability at least $1-\exp(-u)$ we have
\begin{align}
\label{eq:conditional_sim_equation}
\textN{s \ubs_1(\hat \Mmat_J)\! -\! \abs}_{2}^2 \!\leq \! {\varepsilon_{N,J,u}}\frac{\sum_{\ell \in I_J}\hat \rho_{J,\ell}\kappa_{J,\ell} \textN{\Qmat\VSigma_{J,\ell}^{\dagger}\centerRV{X}_{J,\ell}}_{\psi_2}^2}{\sum_{\ell\in I_{J}}\!\hat \rho_{J,\ell}\big(\textN{b_{J,\ell}}_2^2\! -\!{\varepsilon_{N,J,u}}\kappa_{J,\ell}\textN{\Pmat\VSigma_{J,\ell}^{\dagger}\centerRV{X}_{J,\ell}}_{\psi_2}^2\big)},
\end{align}
where \[\varepsilon_{N,J,u} := C\frac{\rank(\VSigma) + \log(J) + u}{\alpha J N}.\]
The same guarantee holds when replacing the denominator in \eqref{eq:conditional_sim_equation} with $\lambda_1(\hat \Mmat_J)$.
\end{theorem}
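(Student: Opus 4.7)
The plan is to view $\hat \Mmat_J$ as a perturbation of a rank-one matrix whose unique unit-norm top eigenvector is $\pm \abs$, and then apply a direct Davis--Kahan--type argument based on the spectral-block decomposition with respect to $\Pmat$ and $\Qmat$. Assumption \ref{ass:alignedness} yields $\bbs_{J,\ell}=b_{J,\ell}\abs$ for every $\ell$, so $\Qmat \bbs_{J,\ell}=0$ and $\Pmat \bbs_{J,\ell}=b_{J,\ell}\abs$. Writing $\hat \bbs_{J,\ell}=b_{J,\ell}\abs + (\hat \bbs_{J,\ell}-\bbs_{J,\ell})$ in \eqref{eq:def_MJ_SIM}, the signal part is the rank-one matrix $\bigl(\sum_{\ell \in I_J}\hat \rho_{J,\ell} b_{J,\ell}^2\bigr)\abs\abs^\top$ with top eigenvector $\abs$. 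For any unit $\vbs\in\abs^\perp$,
\[
\vbs^\top \hat \Mmat_J \vbs = \sum_{\ell \in I_J}\hat \rho_{J,\ell}\bigl(\vbs^\top \Qmat(\hat \bbs_{J,\ell}-\bbs_{J,\ell})\bigr)^2,
\]
while Young's inequality in the form $(b+e)^2\geq b^2/2 - e^2$ gives
\[
\abs^\top \hat \Mmat_J \abs \geq \tfrac{1}{2}\sum_{\ell\in I_J}\hat \rho_{J,\ell}b_{J,\ell}^2 - \sum_{\ell\in I_J}\hat \rho_{J,\ell}\|\Pmat(\hat \bbs_{J,\ell}-\bbs_{J,\ell})\|_2^2.
\]

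To control the perturbation, I apply Lemma \ref{lem:directional_linear_regression} to each conditional dataset of size $N_\ell = N\hat \rho_{J,\ell}\geq \alpha N/J$. Sub-Gaussianity of $(X_{J,\ell},Y_{J,\ell})$ is preserved by Lemma \ref{lem:subG_conditional}, and since $Y_{J,\ell}\in \CR_{J,\ell}$ is supported in an interval of length $1/J$, $\|\centerRV{Y_{J,\ell}}\|_{\psi_2}\lesssim 1/J$. With confidence parameter $u+\log J$ and a union bound over $|I_J|\leq J$, Lemma \ref{lem:directional_linear_regression} gives
\[
\|\Pmat(\hat \bbs_{J,\ell}-\bbs_{J,\ell})\|_2^2 \lesssim \frac{\kappa_{J,\ell}\|\Pmat\VSigma_{J,\ell}^{\dagger}\centerRV{X}_{J,\ell}\|_{\psi_2}^2}{J^2}\cdot\frac{\rank(\VSigma)+\log J+u}{N\hat \rho_{J,\ell}},
\]
and the analogous estimate with $\Pmat$ replaced by $\Qmat$. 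Weighting by $\hat \rho_{J,\ell}$, summing over $\ell \in I_J$, and exploiting $\hat \rho_{J,\ell}\geq \alpha/J$ to convert the common factor $1/(J^2 N)$ into $\hat \rho_{J,\ell}/(\alpha J N)$ reproduces the weighted sums and the factor $\varepsilon_{N,J,u}$ appearing in \eqref{eq:conditional_sim_equation}.

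For the eigenvector bound, parameterize $\ubs_1(\hat \Mmat_J)=\cos\theta\,\abs + \sin\theta\,\vbs$ with a unit $\vbs\in\Im(\Qmat)$. Projecting the eigen-equation $\hat \Mmat_J \ubs_1 = \lambda_1(\hat \Mmat_J)\ubs_1$ onto $\Qmat$ yields $\lambda_1(\hat \Mmat_J)|\sin\theta|\leq \cos\theta\|\Qmat \hat \Mmat_J \abs\|_2 + |\sin\theta|\|\Qmat \hat \Mmat_J \Qmat\|_2$. The PSD Cauchy--Schwarz inequality $|\vbs^\top \hat \Mmat_J \abs|\leq \sqrt{\vbs^\top \hat \Mmat_J \vbs}\sqrt{\abs^\top \hat \Mmat_J \abs}$ applied to unit $\vbs\in\Im(\Qmat)$ gives $\|\Qmat \hat \Mmat_J \abs\|_2^2\leq \|\Qmat \hat \Mmat_J \Qmat\|_2 \lambda_1(\hat \Mmat_J)$. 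Combined with $\|\Qmat \hat \Mmat_J \Qmat\|_2\leq \tfrac{1}{2}\lambda_1(\hat \Mmat_J)$, which will be guaranteed by \eqref{eq:N_requirement_last}, this yields $\sin^2\theta\lesssim \|\Qmat \hat \Mmat_J \Qmat\|_2/\lambda_1(\hat \Mmat_J)$. Choosing $s\in\{\pm 1\}$ to match the sign of $\cos\theta$ gives $\|s\ubs_1-\abs\|_2^2\leq 2\sin^2\theta$, and substituting the per-level-set estimates in the numerator together with $\lambda_1(\hat \Mmat_J)\geq \abs^\top \hat \Mmat_J \abs$ in the denominator (which also delivers the $\lambda_1(\hat \Mmat_J)$-version of the bound) produces \eqref{eq:conditional_sim_equation}.

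The main obstacle lies in carefully propagating union bounds over the random active set $I_J$: one must intersect the high-probability events on which each $\hat \rho_{J,\ell}$ concentrates around its population value so that $N\hat \rho_{J,\ell}\geq\alpha N/J$ holds uniformly, Lemma \ref{lem:directional_linear_regression} applies on every conditional sub-sample (requiring $N_\ell \gtrsim K_{J,\ell}^4(\rank(\VSigma)+\log J+u)$), and the perturbation $\|\Qmat\hat \Mmat_J\Qmat\|_2$ stays strictly below half of $\lambda_1(\hat \Mmat_J)$. Balancing these three constraints simultaneously explains the requirement \eqref{eq:N_requirement_last}.
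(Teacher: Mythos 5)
Your proposal follows the same overall architecture as the paper's proof (decompose $\hat \bbs_{J,\ell}$ into the signal $\bbs_{J,\ell}\in\Span\{\abs\}$ plus noise, bound the $\Qmat$-block of $\hat\Mmat_J$, lower bound $\lambda_1(\hat\Mmat_J)$ by $\abs^\top\hat\Mmat_J\abs$, and feed Lemma \ref{lem:directional_linear_regression} with confidence $u+\log J$ and $N_{J,\ell}\geq\alpha N/J$). The two technical steps where you diverge are both interesting. For the eigenvector deviation, the paper applies Davis--Kahan to $\hat\Mmat_J$ and then exploits the factorization $\hat\Mmat_J=\hat\Bmat_J\hat\Bmat_J^\top$ to pass from $\|\Qmat\hat\Mmat_J\|_F^2/\lambda_1^2$ to $\|\Qmat\hat\Bmat_J\|_F^2/\lambda_1$, with no auxiliary separation condition. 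You instead project the eigenequation onto $\Im(\Qmat)$ and invoke PSD Cauchy--Schwarz (essentially Lemma \ref{lem:sem_pos_def_quadratic_form}), which is more elementary and lands at the same quantity, but requires an additional separation hypothesis $\|\Qmat\hat\Mmat_J\Qmat\|_2\leq\tfrac12\lambda_1(\hat\Mmat_J)$ to absorb the cross term. For the lower bound on $\lambda_1(\hat\Mmat_J)$, your Young's inequality $(b+e)^2\geq b^2/2-e^2$ is actually the \emph{correct} version of a step that the paper writes as an exact identity $\langle\abs,\hat b_{J,\ell}\rangle^2=\|b_{J,\ell}\|_2^2-\|\Pmat(b_{J,\ell}-\hat b_{J,\ell})\|_2^2$; expanding $(\beta+(\hat\beta-\beta))^2$ shows that the paper's equality does not hold in general, so your route is in fact more careful (at the cost of an inconsequential factor of $1/2$ on $\|b_{J,\ell}\|_2^2$ that must be absorbed into $C$).

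There is, however, one genuine gap. You assert that the separation condition $\|\Qmat\hat\Mmat_J\Qmat\|_2\leq\tfrac12\lambda_1(\hat\Mmat_J)$ ``will be guaranteed by \eqref{eq:N_requirement_last}.'' It is not: \eqref{eq:N_requirement_last} only ensures that the concentration estimates in Lemma \ref{lem:directional_linear_regression} are applicable, but says nothing about how large the signal $\sum_\ell\hat\rho_{J,\ell}\|b_{J,\ell}\|_2^2$ is relative to the noise level; if all $b_{J,\ell}$ are tiny, the separation condition fails for every $N$. The theorem does not impose such a condition (and indeed the stated bound \eqref{eq:conditional_sim_equation} may have a nonpositive denominator, in which case it is vacuous). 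To close this gap via your route you would need to observe that whenever $\|\Qmat\hat\Mmat_J\Qmat\|_2>\tfrac12\lambda_1(\hat\Mmat_J)$, the right-hand side of \eqref{eq:conditional_sim_equation} is bounded below by a universal constant, so the bound holds trivially since $\|s\ubs_1(\hat\Mmat_J)-\abs\|_2^2\leq 2$; the paper's Davis--Kahan step sidesteps this case split entirely. A minor further point: the concluding paragraph's concern about ``$\hat\rho_{J,\ell}$ concentrating around its population value'' is unnecessary, since $\hat\rho_{J,\ell}>\alpha/J$ is a deterministic consequence of $\ell\in I_J$ by the definition of the active set.
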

\noindent
The error rate in Theorem \ref{thm:conditional_sim_estimator} is dictated by $\varepsilon_{N,J,u}$
and there are two ways to interpret Theorem \ref{thm:conditional_sim_estimator}.
First, for a fixed parameter $J$ all terms in \eqref{eq:N_requirement_last} and \eqref{eq:conditional_sim_equation} (except $N$ and $\varepsilon_{N,J,u}$) are constants, and we obtain a  $N^{-1/2}$ convergence rate
for the non-squared error as soon as the sample size is sufficiently large. This is
the same rate as the one achieved with the standard OLS estimator \eqref{eq:finite_sample_sim_estimator}.

Parameter $J$ however provides additional flexibility and an intriguing option is to select $J$ as a function that grows with $N$ while still complying with \eqref{eq:N_requirement_last}.
Namely, using $u = \log(J)$ and $J\asymp N/\log(N)$, and plugging into \eqref{eq:conditional_sim_equation}, implies that
a $\sqrt{\log(N/\log(N))\log(N)}N^{-1}\leq \log(N)N^{-1}$ rate is achievable, so long as the remaining terms involving parameter $J$ are balanced.

To give an illustrative example of such a case, we consider the following idealized
setting. We assume the noise-free regime $Y = f(\abs^\top X)$ with strictly monotonic
link in the sense that
\begin{align}
\label{eq:monotonicity_assumption}
  \Covv{\abs^\top X, f(\abs^\top X)| Y \in \CR_{J,\ell}} > L \textrm{Var}(\abs^\top X|Y\in \CR_{J,\ell})\quad \textrm{for some }  L > 0.
\end{align}
Furthermore, conditional random variables
$X_{J,\ell}$ are assumed strictly sub-Gaussian, where the strict sub-Gaussianity constant
$K > 0$ in \eqref{eqn:norm_var_proxy_new} is independent of $J$ and $\ell$,
and we model local covariance matrices as
\begin{align}
\label{eq:idealized_setting}
\Covv{\tilde X_{J,\ell}} = C_{\ell} J^{-2}\Pmat + \Qmat\quad \textrm{ with } c_1 \leq C_{\ell} \leq c_2,
\end{align}
where $c_1,c_2$ are universal constants independent of $J$.
Condition \eqref{eq:idealized_setting} implies that partitioning the data into $J$ level-sets results in a reduction of the variance along the direction of the index
vector, while not affecting the variance in directions orthogonal to the index vector.

With strict sub-Gaussianity
and \eqref{eq:idealized_setting}, we have $K_J \leq \sqrt{K}$, $\kappa_{J,\ell} \leq (c_2/c_1) K^2$,
$\textN{\Pmat\VSigma_{J,\ell}^{\dagger}\widetilde X_{J,\ell}}_{\psi_2}^2 \leq KJ^2/c_1$,
and $\textN{\Qmat\VSigma_{J,\ell}^{\dagger}\widetilde X_{J,\ell}}_{\psi_2}^2 \leq K$.
Then, using $u = \log(J)$ and
$J = \tau N/\log(N)$ for some $\tau > 0$,
the result \eqref{eq:conditional_sim_equation} can be written as
\begin{align}
\label{eq:conditional_sim_equation_derived}
\textN{s \ubs_1(\hat \Mmat_J) - \abs}_{2}^2 \leq \! \frac{C_K \rank(\VSigma)}{\alpha \tau \sum_{\ell\in I_{J}}\hat \rho_{J,\ell}\left(\textN{b_{J,\ell}}_2^2 - \tau \frac{C_K(\rank(\VSigma)+1)}{\alpha}\right)}\!\frac{\log^2(N)}{N^2},
\end{align}
where $C_K > 0$ depends only on $K$, $c_1$, and $c_2$. Furthermore, using
Assumption \ref{ass:alignedness}, and \eqref{eq:idealized_setting}, we have
$\Covv{X,Y|Y \in \CR_{J,\ell}} = \Covv{a^\top X, Y | Y \in \CR_{J,\ell}}a$,
and with the asserted monotonicity \eqref{eq:monotonicity_assumption} we get
\begin{align}
\nonumber
\N{b_{J,\ell}}_2 &\geq \frac{\Covv{X,Y|Y \in \CR_{J,\ell}}^\top \VSigma^{\dagger}_{J,\ell}\Covv{X,Y|Y \in \CR_{J,\ell}}}{\N{\Covv{X,Y|Y \in \CR_{J,\ell}}}_2}\\
\label{eq:lower_bound_bJL}
&> L\textrm{Var}(a^\top X|Y\in \CR_{J,\ell}) a^\top  \VSigma^{\dagger}_{J,\ell} a = L.
\end{align}
Plugging this bound into \eqref{eq:conditional_sim_equation_derived}, and choosing $\tau \in (0,\alpha L^2/(2C_K(\rank(\VSigma)+ 1)))$,
we thus obtain a $\log^2(N)/N^{2}$ convergence rate for the squared error.

The described idealized setting can be a reasonable approximation of reality
for strictly monotonic link functions $f$ in the noise-free regime, since
\eqref{eq:idealized_setting} can be motivated by the observation that in this case slicing the distribution of $X$ based on $Y$
reduces the variance of the data along the direction of the index vector,
as shown in Figure \ref{fig:levelset_details_sigmoid}. Thus, monotonicity is a key requirement
for establishing a faster rate. In the case of noisy responses and strictly monotone links,
the matter is more delicate. Namely, by the minimax rate of linear regression, which equals $\textrm{Var}(Y-f(\abs^\top X))/N$, result \eqref{eq:conditional_sim_equation_derived}
can only hold for all $N \in \bbN$, if $\textrm{Var}(Y-f(\abs^\top X)) \in \CO(\log^2(N)/N)$
as $N \rightarrow \infty$.

The same observation carries over to
more general strictly monotonic links, because, in the presence of noise, condition \eqref{eq:idealized_setting}
and the lower bound \eqref{eq:lower_bound_bJL} do not hold for all $J \in \bbN$. For instance, if $J^2$ exceeds $1/\textrm{Var}(Y-f(\abs^\top X))$
or becomes even larger, the conditional distribution $Y|Y \in \CR_{J,\ell}$ has roughly as much correlation with label noise as it does with the labels, $f(\abs^\top X)$.
This suggests $\N{b_{J,\ell}}_2 \approx 0$, and that further slicing the distribution of $X$ based on $Y$
will not decrease the variance in index vector direction, as required per \eqref{eq:idealized_setting}.
For $J^{-2} \gg \textrm{Var}(Y-f(\abs^\top X))$ however, Condition \eqref{eq:idealized_setting} and
\eqref{eq:lower_bound_bJL} can still hold approximately, and our experiments below suggest that
the proposed estimator benefits from an aggresive splitting of the data
in the case of nonlinear, strictly monotonic links.
We now confer the implications of Theorem \ref{thm:conditional_sim_estimator},
with synthetic examples, under a data-driven parameter choice rule for $J$.

\begin{figure*}[t!]
    \centering
    \subfigure[Identity: $f(t) =t$]{\label{fig:sim_estimator_identity}\includegraphics[width=0.49\textwidth]{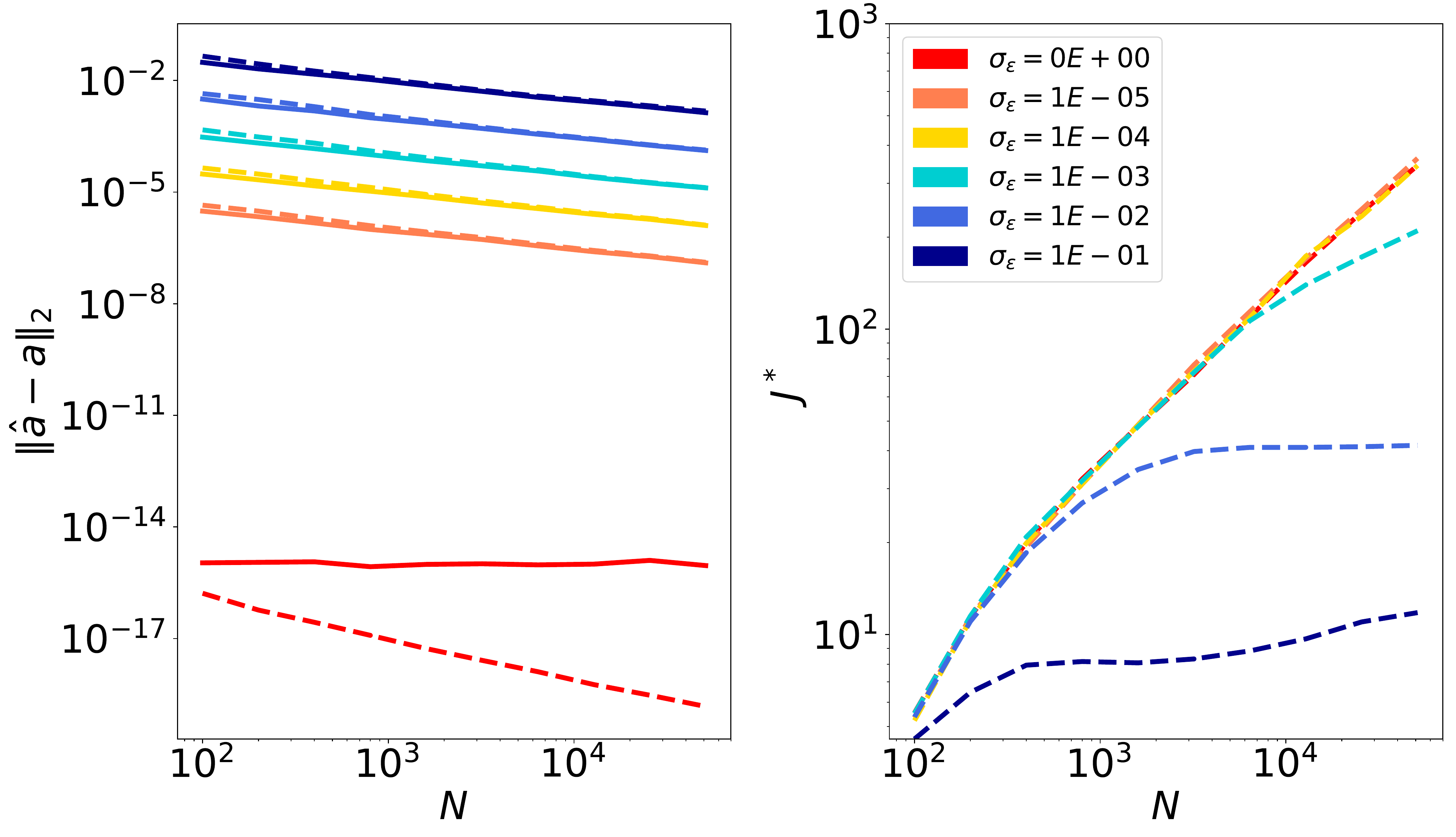}}
    \subfigure[Logit: $f(t) = \frac{1}{1+\exp(-t)}$]{\label{fig:sim_estimator_logit}\includegraphics[width=0.49\textwidth]{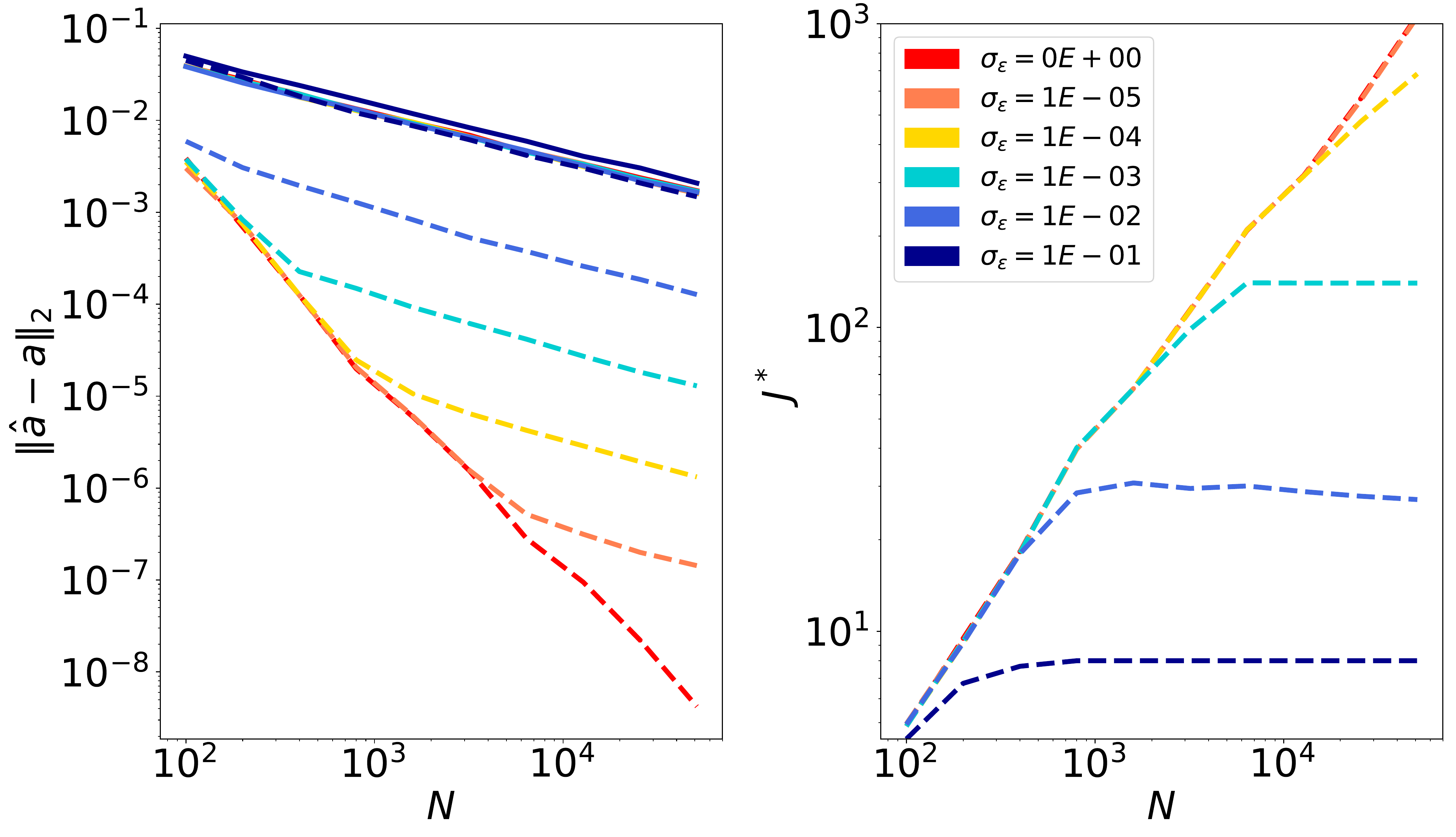}}
    \subfigure[ReLU: $f(t) = \max\{0,t\}$]{\label{fig:sim_estimator_ReLU}\includegraphics[width=0.49\textwidth]{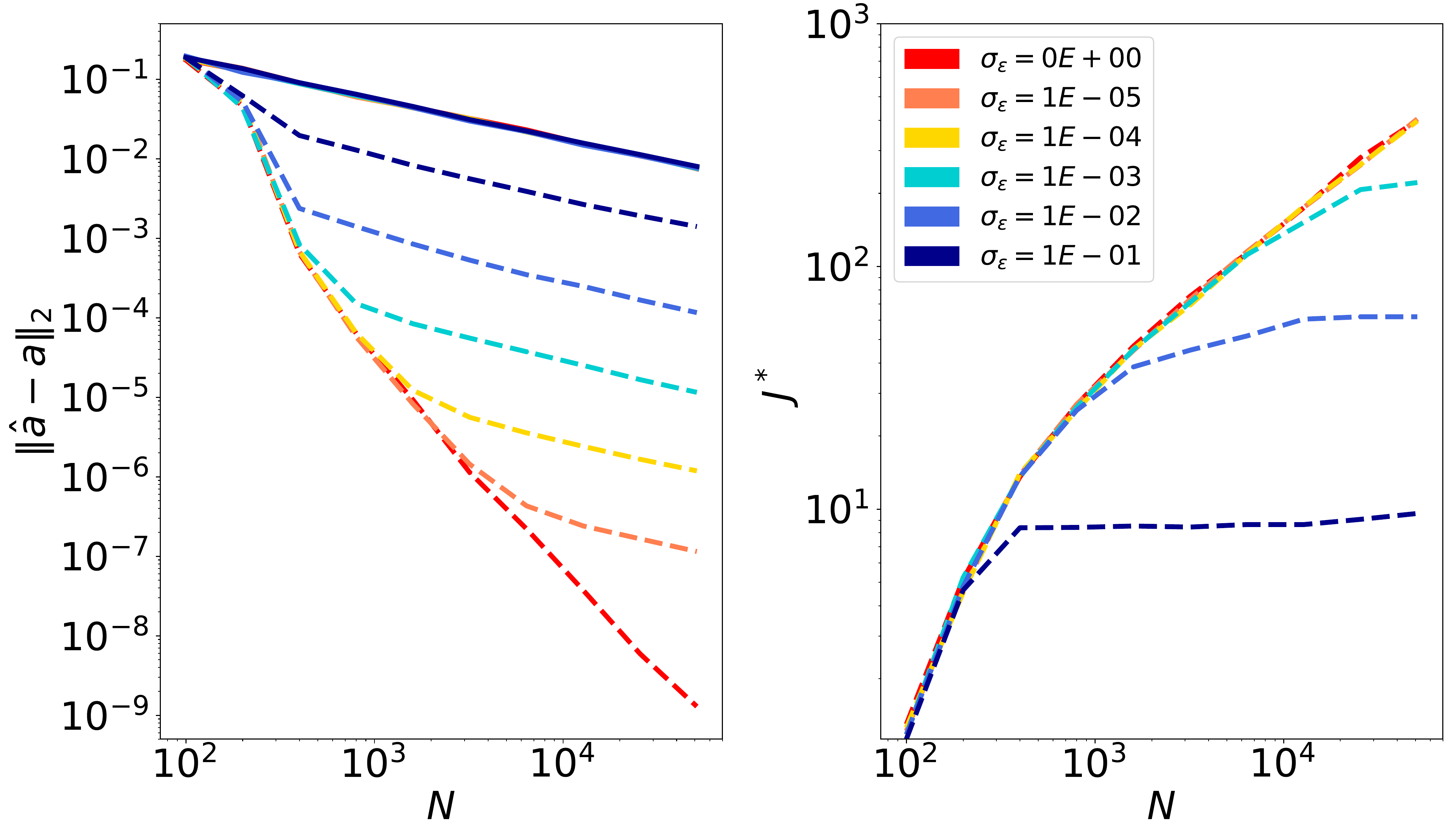}}
    \subfigure[Tangent hyperbolicus: $f(t) = \tanh(t)$]{\label{fig:sim_estimator_tanh}\includegraphics[width=0.49\textwidth]{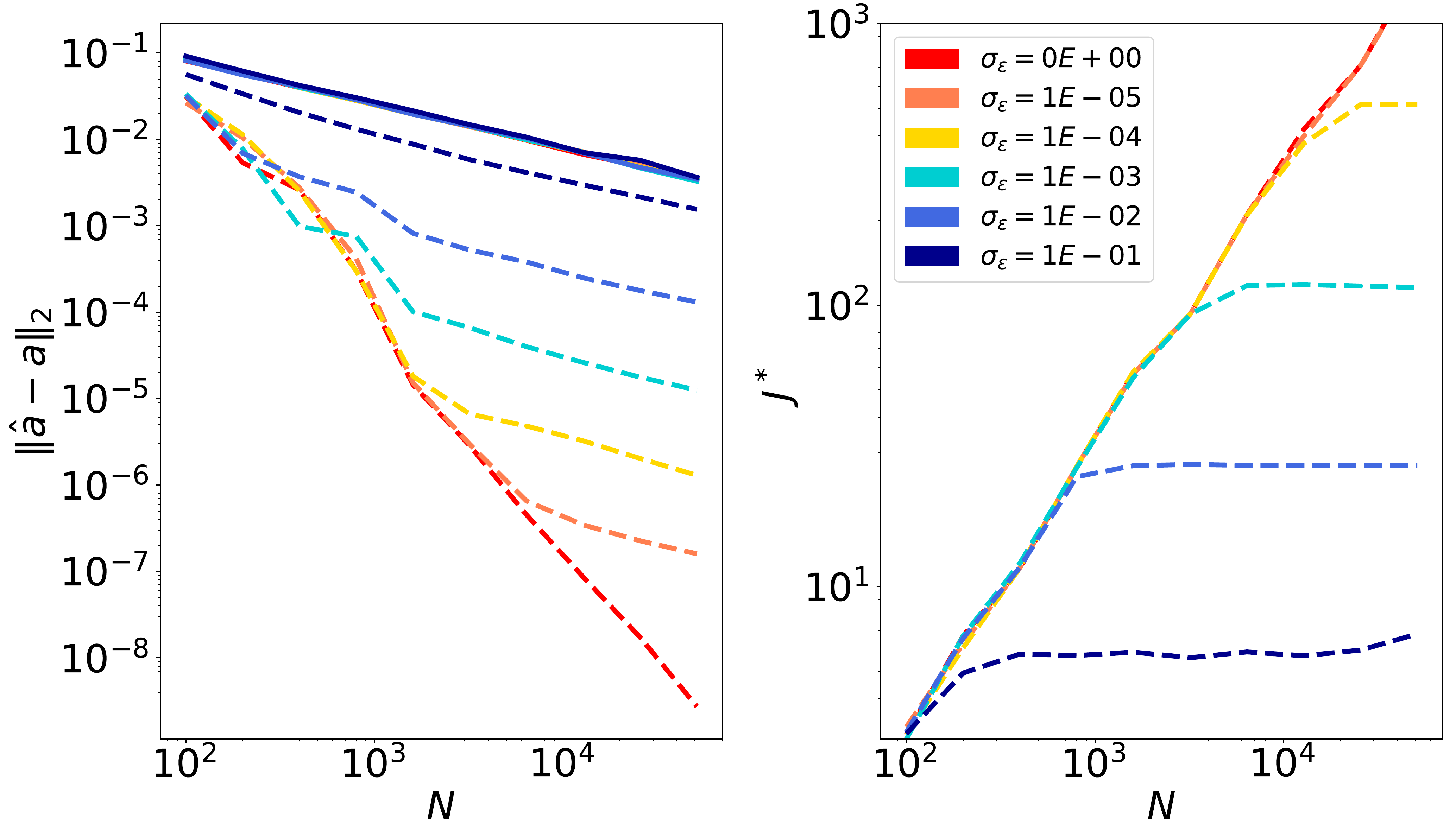}}
    \subfigure[Shifted absolute: $f(t) = \SN{t-\frac{1}{2}}$]{\label{fig:absolute_1}\includegraphics[width=0.49\textwidth]{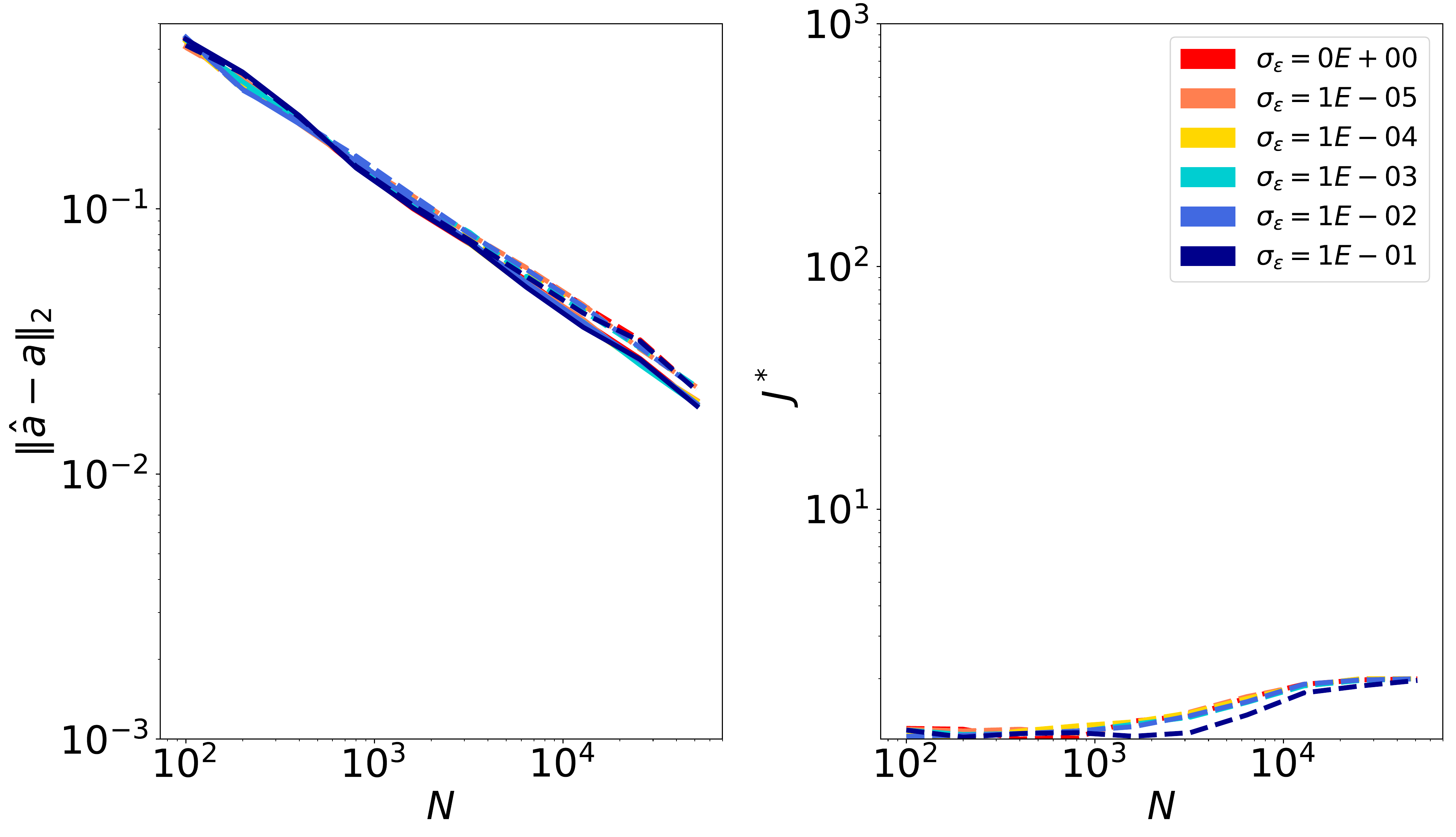}}
    \subfigure[Mixed: $f(t) = t+t^2+\cos(t)$]{\label{fig:weird_function_1}\includegraphics[width=0.49\textwidth]{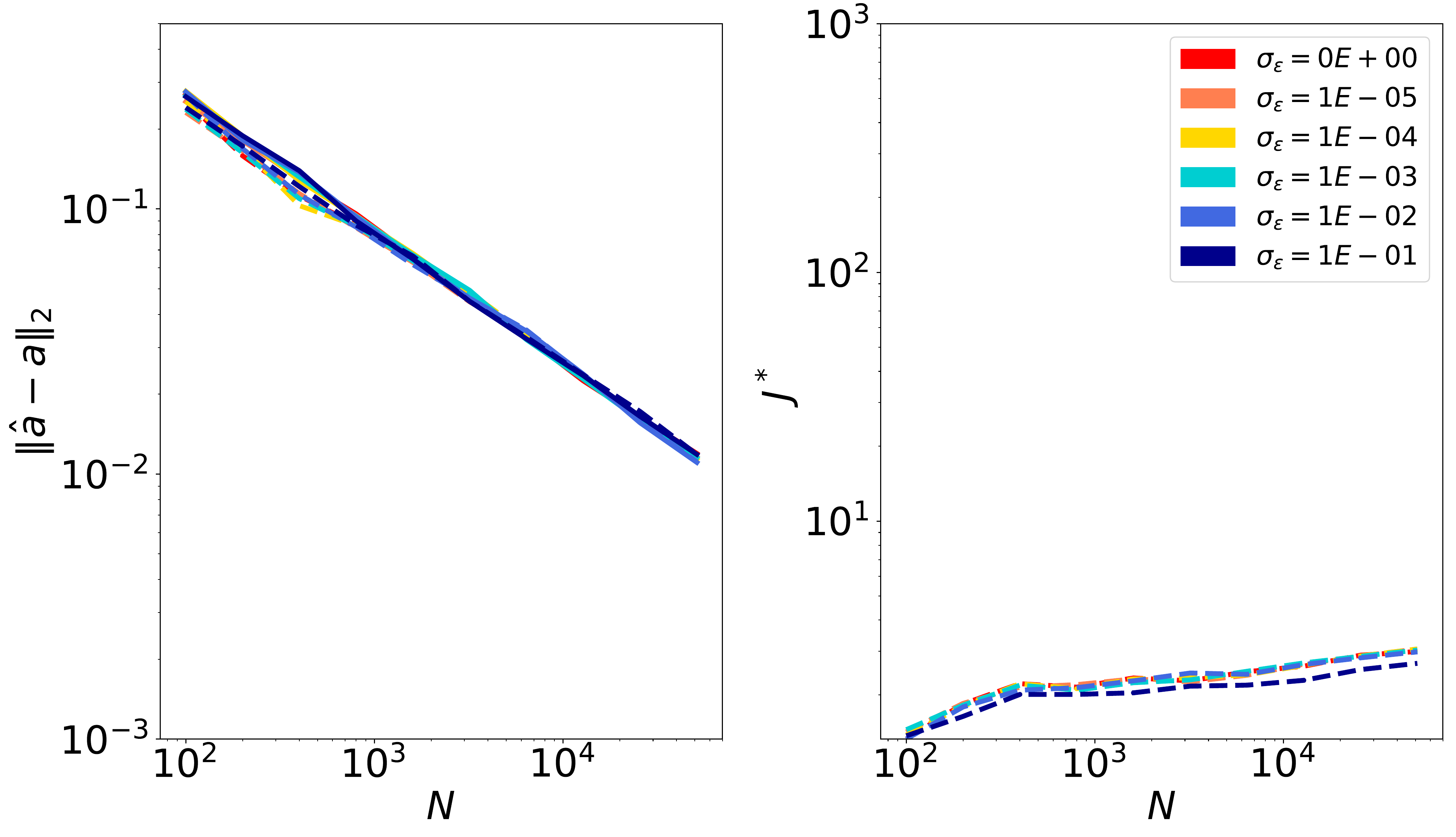}}
    \caption{We plot the error $\textN{\hat \abs - \abs}_2$ using \eqref{eq:finite_sample_sim_estimator} (solid lines), respectively
    $\hat \abs = \ubs_1(\hat \Mmat_{J^*})$ (dashed lines), for several link functions. The right plots in each subplot shows $J^*$ that is chosen according
    to the rule \eqref{eq:choice_J_conditional_SIM}.
    We see that in all cases where $f$ is a nonlinear, monotonic function $\ubs_1(\hat \Mmat_{J^*})$ improves upon \eqref{eq:finite_sample_sim_estimator}, especially
    in scenarios with moderate noise levels $\sigma_{\zeta}$. In the other cases
    the two estimators achieve similar accuracy.}
    \label{fig:sim_estimator_results}
\end{figure*}

\paragraph{Numerical setup and parameter choice.}
We sample $X \sim \CN(\boldsymbol{0},\Id_D)$, with $D=10$, and let $\abs = (1,0,\ldots,0)^\top$ (the specific choice of $\abs$ is irrelevant for the results due to the rotational invariance of $X$).
Responses are generated by $Y = f(\abs^\top X) + \zeta$, where $\zeta \sim \CN({0},\sigma_{\zeta}^2\textrm{Var}(f(\abs^\top X)))$.
We set $\alpha = 0.05$ and  additionally exclude subsets with $\SN{\CS_{J,\ell}} < 2D$, for the sake of numerical stability.
Our goal is twofold.
First, to compare estimation of the index vector using the standard OLS approach \eqref{eq:finite_sample_sim_estimator} with the strategy proposed in this section, and second, to empirically confer the observations described after Theorem \ref{thm:conditional_sim_estimator}.

A critical step of the modified approach is the selection of $J$ as a function of $N$.
As mentioned in Theorem \ref{thm:conditional_sim_estimator}, the denominator effectively
lower bounds $\lambda_1(\hat \Mmat_J)$, and the corresponding concentration bound can be written as
\begin{align*}
\textN{s \ubs_1(\hat \Mmat_J) - \abs}_{2}^2 \lesssim \frac{\rank(\VSigma) + \log(J) + u}{\alpha NJ} \frac{\sum_{\ell \in I_J}\hat \rho_{J,\ell}\kappa_{J,\ell} \textN{\Qmat\VSigma_{J,\ell}^{\dagger}\centerRV{X}_{J,\ell}}_{\psi_2}^2}{\lambda_1(\hat \Mmat_J)}.
\end{align*}
Neglecting the dependence of $\kappa_{J,\ell}$ and $\textN{\Qmat\VSigma_{J,\ell}^{\dagger}\centerRV{X}_{J,\ell}}_{\psi_2}^2$ on $J$
and $\log(J)$-terms, originating from union bounds in the proofs, the error is minimized
when maximizing $J\lambda_1(\hat \Mmat_J)$.
We thus propose to adaptively choose $J$ by
\begin{equation}
\label{eq:choice_J_conditional_SIM}
J^* := \max\{J=\lceil (1.5)^k\rceil : k \in \bbN_0,\  J\lambda_1(\hat \Mmat_J) > J' \lambda_1(\hat \Mmat_{J'})\,\,  \forall\,\,   J' < J\},
\end{equation}
where we use an exponential grid in $\bbN$ to decrease the computational demand. Note also that
$\lambda_1(\hat \Mmat_J) \geq \sum_{\ell \in I_{J}}\hat \rho_{J,\ell} \textN{\hat b_{J,\ell}}_2^2 \approx  \sum_{\ell \in I_{J}}\hat \rho_{J,\ell} \textN{b_{J,\ell}}_2^2$,
so, using parameter choice \eqref{eq:choice_J_conditional_SIM}, we expect no further increase of $J$ if $\textN{b_{J,\ell}}_2$'s decay rapidly.

\paragraph{Numerical experiments.}

Figure \ref{fig:sim_estimator_results} shows the errors $\textN{\hat \dbs - \abs}_2$ and the corresponding optimized choices $J^\ast$ of the number of level-sets $J$, for different link functions $f$.
Solid lines correspond to the estimator \eqref{eq:finite_sample_sim_estimator}, the dashed lines to $\ubs_1(\hat \Mmat_{J^*})$, and different colors represent different noise levels $\sigma_{\zeta}$.
In Figures \ref{fig:sim_estimator_logit} - \ref{fig:sim_estimator_tanh} we consider monotonic link functions and we can see that
the approach presented in this section performs substantially better than the standard OLS approach.
On the other hand, in Figures \ref{fig:sim_estimator_identity}, \ref{fig:absolute_1}
and \ref{fig:weird_function_1} the two approaches achieve similar performance.
In case of \ref{fig:sim_estimator_identity} this is because \eqref{eq:finite_sample_sim_estimator} is indeed optimal,
according to the Gauss-Markov Theorem, whereas link functions in \ref{fig:absolute_1}, \ref{fig:weird_function_1},
are not monotonic.
In the latter case, the plots of $J^*$ confirm that $\lambda_1(\hat \Mmat_J)$ decays rapidly as a function of $J$, leaving $J^*$ essentially constant
as a function of $N$.

Let us examine the results for monotonic functions in more detail.
The plots for $J^\ast$ in Figures \ref{fig:sim_estimator_logit} - \ref{fig:sim_estimator_tanh} show that the
number of level-sets $J^*$ indeed grows as a function of $N$, and it does so up to a level dictated
by the noise level $\sigma_{\zeta}$.
This shows that $\lambda_1(\hat \Mmat_J)$ does not decay faster than $J^{-1}$ over a reasonably large range of $N$ values, i.e. until  $ J^{-1}\approx\sigma_{\zeta}$.
As a consequence, our approach achieves a faster, $N^{-1}$ estimation rate for the index vector.
Specifically, in the noise-free case this holds asymptotically as $N\rightarrow \infty$.
On the other hand, in the case of corrupted $Y$'s, we first have a faster $N^{-1}$ convergence,
and then a sharp transition into the usual $N^{-1/2}$ rate.
The number of points $N$ at which this transition occurs depends inversely on the level of noise.

\section*{Acknowledgements}
We like to thank the anonymous referees for their feedback that helped to greatly improve earlier
versions of the manuscript.
This work is supported by the grant \emph{Function-driven Data Learning in High Dimensions}
by the Research Council of Norway.

%%%%%%%%%%%%%%%%%%%%%%%%%%%%%%%%%
%			Appendix			%
%%%%%%%%%%%%%%%%%%%%%%%%%%%%%%%%%
\section{Appendix}
\subsection{Additional technical results}
\label{subsec:additional_results}
\begin{lemma}[Properties of the sub-Gaussian norm]
\label{lem:subGaussian_properties}
Let $X,Z$ be sub-Gaussian random vectors in $\bbR^D$, let $\VSigma = \Covv{X}$, and $\Amat \in \bbR^{D\times D}$.
Then we have
\begin{enumerate}[label =(\arabic*)]
\item\label{pro:centering} $\textN{X - \bbE X}_{\psi_2} \lesssim \textN{X}_{\psi_2}$ (holds also for sub-exponential random variables),
\item\label{pro:induced_norm} $\textN{\Amat\tilde X}_{\psi_2}\leq \textN{\Amat}_2 \textN{\Pmat_{\Im(\Amat^\top)}\tilde X}_{\psi_2}$,
\item\label{pro:cross_cov_psi2} $\textN{\Amat\VSigma \Amat^\top}_2 \lesssim \textN{\Amat\tilde X}_{\psi_2}^{2}$,
\item\label{pro:sube_subg_cs} $X^\top Z$ is sub-exponential with $\textN{\tilde X^\top \tilde Z - \bbE[\tilde X^\top \tilde Z]}_{\psi_1} \lesssim \textN{\tilde X}_{\psi_2}\textN{\tilde Z}_{\psi_2}$.
\end{enumerate}
\end{lemma}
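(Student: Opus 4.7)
The plan is to handle each of the four statements by reducing to a scalar Orlicz calculation through the variational definition $\N{X}_{\psi_p} = \sup_{\vbs\in\bbS^{D-1}}\N{\vbs^\top X}_{\psi_p}$ and invoking classical one-dimensional inequalities.

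For \ref{pro:centering}, I would fix $\vbs\in\bbS^{D-1}$ and use the scalar triangle inequality $\N{\vbs^\top \centerRV X}_{\psi_2} \leq \N{\vbs^\top X}_{\psi_2} + \N{\vbs^\top \bbE X}_{\psi_2}$. The deterministic term satisfies $\N{\vbs^\top \bbE X}_{\psi_2} = |\vbs^\top \bbE X|/\sqrt{\log 2}$, and Jensen combined with the scalar bound $\bbE|Y|\lesssim \N{Y}_{\psi_2}$ controls this by a constant multiple of $\N{\vbs^\top X}_{\psi_2}$. Taking the supremum over $\vbs$ then closes the argument, and the same chain of reasoning applies verbatim to sub-exponentials. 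For \ref{pro:induced_norm}, I would expand $\N{\Amat\centerRV X}_{\psi_2} = \sup_{\vbs\in\bbS^{d_1-1}}\N{(\Amat^\top\vbs)^\top\centerRV X}_{\psi_2}$; the key observation is that $\Amat^\top\vbs\in\Im(\Amat^\top)$, so writing $\Pmat := \Pmat_{\Im(\Amat^\top)}$ and using self-adjointness yields $(\Amat^\top\vbs)^\top\centerRV X = (\Amat^\top\vbs)^\top\Pmat\centerRV X$. Pulling $\N{\Amat^\top\vbs}_2 \leq \N{\Amat}_2$ out of the one-dimensional $\psi_2$ norm and then taking the supremum of the normalized residual will produce the claimed bound.

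For \ref{pro:cross_cov_psi2}, PSD spectral calculus gives $\N{\Amat\VSigma\Amat^\top}_2 = \sup_{\vbs\in\bbS^{d_1-1}}\Var{\vbs^\top\Amat X} = \sup_{\vbs}\N{\vbs^\top \Amat\centerRV X}_{L^2}^2$. I would then apply the scalar inequality $\N{Y}_{L^2}^2 \lesssim \N{Y}_{\psi_2}^2$, which drops out of expanding $\bbE\exp(Y^2/\N{Y}_{\psi_2}^2)\leq 2$ and retaining the quadratic term, to upgrade the supremum to $\sup_{\vbs}\N{\vbs^\top\Amat\centerRV X}_{\psi_2}^2 = \N{\Amat\centerRV X}_{\psi_2}^2$, which is exactly the right-hand side.

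For \ref{pro:sube_subg_cs}, the scalar building block is $\N{UW}_{\psi_1}\lesssim \N{U}_{\psi_2}\N{W}_{\psi_2}$, which follows from $|uw|\leq (u^2+w^2)/2$ combined with Cauchy--Schwarz on exponential moments. Using \ref{pro:centering} to strip off the centering reduces the task to bounding $\N{\centerRV X^\top\centerRV Z}_{\psi_1}$ in terms of the directional sub-Gaussian norms. I would then control the moment generating function of $|\centerRV X^\top\centerRV Z|/s$ by Cauchy--Schwarz in exponential moments, recognizing that every one-dimensional projection of $\centerRV X$ or $\centerRV Z$ is sub-Gaussian with norm bounded by $\N{\centerRV X}_{\psi_2}$ or $\N{\centerRV Z}_{\psi_2}$. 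The hardest step will be extracting the advertised dimension-free product structure from the bilinear form $\centerRV X^\top \centerRV Z$; I expect to resolve this by applying the scalar product bound to well-chosen normalized projections picked out by the sub-Gaussian vector norms, rather than through a coordinate-wise expansion that would introduce spurious dimensional factors.
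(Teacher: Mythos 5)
Your treatment of items (1)--(3) is correct and in substance the same as the paper's: the paper simply cites \cite[Lemma 2.6.8]{vershynin2018high} for the scalar centering fact and pushes it through the supremum in \eqref{eqn:subG_vect_norm}, normalizes $\Amat^\top\vbs$ and takes a supremum over $\Im(\Amat^\top)\cap\bbS^{D-1}$ for (2), and applies $\Var{u}\lesssim\textN{\tilde u}_{\psi_2}^2$ directionally for (3). You have merely spelled out the scalar facts that the paper cites, which is fine.

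Item (4) is where a real gap sits, and although you sensed the difficulty, you left it as a promissory note that cannot be cashed: as a statement about genuine $D$-dimensional vectors with $D>1$, (4) is false. Take $X=Z\sim\CN(\boldsymbol{0},\Id_D)$. Then $\tilde X^\top\tilde Z-\bbE[\tilde X^\top\tilde Z]=\textN{X}_2^2-D$ is a centered $\chi^2_D$ variable whose $\psi_1$-norm is of order $\sqrt{D}$ (expand $\bbE\exp\bigl((\chi^2_D-D)/s\bigr)=(1-2/s)^{-D/2}e^{-D/s}\approx\exp(D/s^2)$ for large $s$), whereas $\textN{\tilde X}_{\psi_2}\textN{\tilde Z}_{\psi_2}$ is an absolute constant because every one-dimensional projection of $X$ is standard Gaussian. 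The bilinear form $\tilde X^\top\tilde Z=\sum_i\tilde X_i\tilde Z_i$ simply cannot be reduced to a single scalar product without paying a dimensional price; that is exactly the obstruction you flagged but then proposed to sidestep by ``well-chosen normalized projections,'' and no such choice exists. To be fair, the paper's own proof has the identical lacuna: it cites the scalar product lemma \cite[Lemma 2.7.6]{vershynin2018high} (cf.\ also Lemma 2.7.7 there) for a claim phrased in vector terms. This causes no downstream damage because (4) is only ever invoked for products of two \emph{scalar} quantities --- $\langle\Amod\tilde X,\ybs\rangle$ against $\langle\Bmod\tilde X,\xbs\rangle$ in the proof of Lemma \ref{lem:directed_covariance_estimation}, and $\vbs^\top\Amat\tilde X$ against the scalar $\tilde Y$ in Lemma \ref{lem:concentration_of_r} --- where the scalar lemma applies verbatim. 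The honest fix, for both you and the paper, is to state (4) for scalar sub-Gaussians $X,Z\in\bbR$, which is exactly Vershynin's lemma and suffices for every use in the text.
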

\begin{proof}

\noindent
Property \ref{pro:centering} is shown in \cite[Lemma 2.6.8]{vershynin2018high} for sub-Gaussian random variables.
Applying the definition of the sub-Gaussian norm \eqref{eqn:subG_vect_norm} the claim follows,
since ${\vbs}^\top{X}$ is a sub-Gaussian random variable for every $\vbs\in\bbS^{D-1}$.
The same line of arguments holds for sub-exponential vectors.

For \ref{pro:induced_norm} we compute for arbitrary $\vbs \in \bbS^{D-1}$ and $\Amat^\top \vbs \in \Im(\Amat^\top)$
\begin{align*}
\textN{\vbs^\top \Amat \tilde X}_{\psi_2} &= \textN{\Amat^\top \vbs}_2 \textN{\left(\Amat^\top \vbs/\textN{\Amat^\top \vbs}_2\right)^\top\tilde X}_{\psi_2}
\\&\leq \textN{\Amat^\top}_2 \sup_{\ubs \in \Im(\Amat^\top) \cap \bbS^{D-1}}\textN{\ubs^\top\tilde X}_{\psi_2} \leq \textN{\Amat}_2 \textN{\Pmat_{\Im(\Amat^\top)}\tilde X}_{\psi_2}.
\end{align*}

For \ref{pro:cross_cov_psi2} we first note that \cite[Proposition 2.5.2]{vershynin2018high} implies
$\Var{u} \lesssim \textN{\tilde u}_{\psi_2}^2$ for any sub-Gaussian $u$, where $\tilde u=u-\bbE[u]$.
Thus, for every $\vbs \in \bbS^{D-1}$ we have
\[ \vbs^\top \Covv{\Amat X}\vbs = \Var{\vbs^\top \Amat X} \lesssim \textN{\vbs^\top \Amat\tilde X}_{\psi_2}^2.\]
Taking the supremum with respect to $\vbs \in \bbS^{D-1}$, the result follows since $\textCovv{\Amat X}$ is positive semidefinite.

Property \ref{pro:sube_subg_cs} follows from the centering property \ref{pro:centering} and \cite[Lemma 2.7.6]{vershynin2018high}.
\end{proof}

\begin{lemma}
\label{lem:sem_pos_def_quadratic_form}
Let $\Amat\in\bbR^{D\times D}$ be positive semidefinite, $\Bmat_1\in\bbR^{d_1\times D},\, \Bmat_2\in\bbR^{d_2\times D}$.
For $\ubs\in\bbR^{d_1},\,\vbs\in\bbR^{d_2}$ we have $\ubs^\top \Bmat_1\Amat \Bmat_2^\top \vbs \leq \sqrt{\ubs^\top \Bmat_1 \Amat\Bmat_1^\top \ubs}\sqrt{\vbs^\top \Bmat_2 \Amat\Bmat_2^\top \vbs}$.
 Moreover, $\textN{\Bmat_1 \Amat\Bmat_2}_2^2\leq \textN{\Bmat_1\Amat\Bmat_1^\top}_2\textN{\Bmat_2 \Amat\Bmat_2^\top}_2$.
\end{lemma}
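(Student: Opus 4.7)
The plan is to reduce everything to a Cauchy--Schwarz argument after extracting a symmetric square root of $\Amat$. Since $\Amat$ is positive semidefinite, it admits a symmetric (positive semidefinite) square root $\Amat^{1/2}$ with $\Amat = \Amat^{1/2}\Amat^{1/2}$. I would then rewrite the bilinear form as
\[
\ubs^\top \Bmat_1 \Amat \Bmat_2^\top \vbs
= \bigl\langle \Amat^{1/2} \Bmat_1^\top \ubs,\; \Amat^{1/2} \Bmat_2^\top \vbs\bigr\rangle,
\]
using self-adjointness of $\Amat^{1/2}$. Applying the standard Cauchy--Schwarz inequality in $\bbR^D$ to these two vectors and recognizing $\|\Amat^{1/2}\Bmat_i^\top \wbs\|_2^2 = \wbs^\top \Bmat_i \Amat \Bmat_i^\top \wbs$ yields the first claim immediately.

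For the operator norm inequality (assuming the intended statement is $\|\Bmat_1 \Amat \Bmat_2^\top\|_2^2 \leq \|\Bmat_1\Amat\Bmat_1^\top\|_2 \|\Bmat_2\Amat\Bmat_2^\top\|_2$, which is the dimensionally consistent reading), I would use the variational characterization
\[
\|\Bmat_1\Amat\Bmat_2^\top\|_2 = \sup_{\ubs\in\bbS^{d_1-1},\,\vbs\in\bbS^{d_2-1}} \ubs^\top \Bmat_1\Amat\Bmat_2^\top \vbs,
\]
then plug in the pointwise bound just established, and finally take the supremum over $\ubs,\vbs$ independently on the right-hand side. Since $\sup_{\ubs\in\bbS^{d_1-1}} \ubs^\top \Bmat_1\Amat\Bmat_1^\top \ubs = \|\Bmat_1\Amat\Bmat_1^\top\|_2$ (by positive semidefiniteness of $\Bmat_1\Amat\Bmat_1^\top$), the two suprema decouple and squaring gives the result.

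There is no real obstacle here; the only subtlety is making sure $\Amat^{1/2}$ exists as a self-adjoint operator, which is the standard spectral theorem for symmetric positive semidefinite matrices. Everything else is a one-line Cauchy--Schwarz plus taking suprema, and the dimensional mismatch in the stated second inequality should be flagged as a typo (writing $\Bmat_2^\top$ in place of $\Bmat_2$).
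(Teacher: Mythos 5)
Your proof is correct and essentially identical to the paper's: both extract the symmetric square root $\Amat^{1/2}$, apply Cauchy--Schwarz to $\langle \Amat^{1/2}\Bmat_1^\top \ubs, \Amat^{1/2}\Bmat_2^\top \vbs\rangle$, and then pass to suprema over unit vectors using positive semidefiniteness of $\Bmat_i \Amat \Bmat_i^\top$. You are also right that the second inequality should read $\|\Bmat_1\Amat\Bmat_2^\top\|_2^2$; the displayed step \eqref{eqn:CS_term1} in the paper's proof confirms the missing transpose is a typo.
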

\begin{proof}
Applying the Cauchy-Schwartz inequality we have
\begin{align}
\label{eq:aux_result_1}
\ubs^\top \Bmat_1 \Amat\Bmat_2^\top \vbs = \langle{\Amat^{1/2}\Bmat_1^\top \ubs},{ \Amat^{1/2}\Bmat_2^\top \vbs}\rangle\leq \textN{\Amat^{1/2}\Bmat_1^\top \ubs}_2\textN{\Amat^{1/2}\Bmat_2^\top \vbs}_2.
\end{align}
By the same line of argument we have
$\textN{\Amat^{1/2}\Bmat_1^\top \ubs}_2^2 = \ubs^\top \Bmat_1 \Amat\Bmat_1^\top \ubs$
and $\textN{\Amat^{1/2}\Bmat_2^\top \vbs}_2^2 = \vbs^\top \Bmat_2 \Amat\Bmat_2^\top \vbs$, giving the first statement.
Considering now $\textN{\ubs}_2=\N{\vbs}_2=1$, we have
\begin{align}\label{eqn:CS_term1}
\sup_{\substack{\N{\ubs}_2=1,\\ \N{\vbs}_2=1}} \ubs^\top \Bmat_1\Amat \Bmat_2^\top \vbs \leq \sqrt{\sup_{\N{\ubs}_2=1}\ubs^\top \Bmat_1 \Amat\Bmat_1^\top \ubs}\sqrt{\sup_{\N{\vbs}_2=1} \vbs^\top \Bmat_2 \Amat\Bmat_2^\top \vbs}.
\end{align}
Notice that since $\Amat$ is positive semidefinite, then $\Bmat_1 \Amat\Bmat_1^\top$ and $\Bmat_2 \Amat\Bmat_2^\top$ are positive semidefinite. Therefore,
\[\sup_{\N{\ubs}_2=1}\ubs^\top \Bmat_1 \Amat\Bmat_1^\top \ubs=\sup_{\N{\ubs}_2=1}\textN{(\Bmat_1 \Amat\Bmat_1^\top)^{1/2} \ubs}_2^2=\N{\Bmat_1 \Amat\Bmat_1^\top}_2.\]
An analogous expression holds for the other term. Identifying the quadratic form on the left hand side in \eqref{eqn:CS_term1} as the operator norm of $\Bmat_1\Amat\Bmat_2$, the conclusion follows.
\end{proof}

The following is a standard concentration bound for sub-Gaussian and sub-exponential random vectors around their mean.
\begin{lemma}
\label{lem:subExp_mean_estimation_improved}
Let $\{X_i : i \in [N]\}$ be independent copies of a centered random vector $X \in \bbR^D$.
Denote $\hat \Vmu := N^{-1}\sum_{i=1}^{N}X_i$ and $\msf(t) = t \vee t^2$.
For any $u>0$ the following hold with probability at least $1-\exp(-u)$.
\begin{enumerate}[label =(\arabic*),leftmargin=\widthof{[(2)]}]
\item If $\textN{X}_{\psi_2} < \infty$ we have
\(
\textN{\hat \Vmu}_2 \lesssim \textN{X}_{\psi_2}\sqrt{\frac{D+u}{N}}.
\)
\item If $\textN{X}_{\psi_1} < \infty$ we have
\(
\textN{\hat \Vmu}_2 \lesssim \textN{X}_{\psi_1}\msf\left(\sqrt{\frac{D+u}{N}}\right).
\)
\end{enumerate}
\end{lemma}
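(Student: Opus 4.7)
The proof is a standard covering argument on the unit sphere combined with one-dimensional tail bounds, so the plan is to reduce the norm to a supremum, control each direction, and union-bound.

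First I would use $\textN{\hat\Vmu}_2=\sup_{\vbs\in\bbS^{D-1}}\vbs^\top\hat\Vmu$ to reduce the problem to controlling a supremum of scalar sums. Fix $\vbs\in\bbS^{D-1}$ and consider $S_N(\vbs):=\vbs^\top\hat\Vmu=N^{-1}\sum_{i=1}^N \vbs^\top X_i$. In case (1), each $\vbs^\top X_i$ is centered sub-Gaussian with $\textN{\vbs^\top X}_{\psi_2}\leq \textN{X}_{\psi_2}$ by definition of the vector sub-Gaussian norm \eqref{eqn:subG_vect_norm}. General sub-Gaussian concentration (Hoeffding-type, cf.\ \cite[Proposition 2.6.1]{vershynin2018high}) yields
\[
\bbP\big(|S_N(\vbs)|>t\big)\leq 2\exp\!\left(-\frac{cNt^2}{\textN{X}_{\psi_2}^2}\right).
\]
In case (2), $\vbs^\top X_i$ is centered sub-exponential with $\textN{\vbs^\top X}_{\psi_1}\leq\textN{X}_{\psi_1}$, so Bernstein's inequality gives
\[
\bbP\big(|S_N(\vbs)|>t\big)\leq 2\exp\!\left(-cN\min\!\left\{\frac{t^2}{\textN{X}_{\psi_1}^2},\frac{t}{\textN{X}_{\psi_1}}\right\}\right).
\]

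Next, I would invoke a standard $\tfrac{1}{2}$-net $\CN\subset\bbS^{D-1}$ with cardinality $|\CN|\leq 5^D$ and the net approximation $\textN{\hat\Vmu}_2\leq 2\max_{\vbs\in\CN}|S_N(\vbs)|$. A union bound over $\CN$ combined with the per-direction tail bound above yields, for any $t>0$,
\[
\bbP\!\left(\textN{\hat\Vmu}_2>2t\right)\leq 2\cdot 5^D\exp\!\left(-\frac{cNt^2}{\textN{X}_{\psi_2}^2}\right)
\]
in case (1), and analogously with the Bernstein exponent in case (2). Setting the right-hand side equal to $\exp(-u)$ and solving for $t$ absorbs the $D$ term into $D+u$, giving $t\asymp \textN{X}_{\psi_2}\sqrt{(D+u)/N}$ in case (1). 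In case (2), the $\min$ in the Bernstein exponent is inverted into the $\max$-of-$t$-and-$t^2$ structure $\msf$, producing $t\lesssim \textN{X}_{\psi_1}\,\msf\!\left(\sqrt{(D+u)/N}\right)$.

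There is no real obstacle here; the only minor subtlety is making sure the centering $\bbE X=0$ is used (so that Hoeffding/Bernstein apply directly), and that the constant in the net approximation is absorbed into the $\lesssim$. The scheme is essentially the textbook route to dimension-dependent mean-deviation bounds for sub-Gaussian and sub-exponential vectors, which is why the paper labels it as standard.
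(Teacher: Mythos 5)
Your argument is correct and matches the paper's own proof essentially line by line: both reduce $\textN{\hat\Vmu}_2$ to a supremum over a $\delta$-net of $\bbS^{D-1}$, apply Hoeffding (sub-Gaussian case) or Bernstein (sub-exponential case) to each scalar sum $N^{-1}\sum_i \vbs^\top X_i$, and close with a union bound over the net. The only differences are cosmetic choices of $\delta$ (you use $1/2$ with $|\CN|\leq 5^D$; the paper uses $\delta<1/4$ with $|\CN|\leq 12^D$), which are absorbed by the universal constant in $\lesssim$.
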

\begin{proof}
The argument for the two bounds follows along analogous lines. Let $\delta < 1/4$, and $\CN$ be a $\delta$-net
of $\bbS^{D-1}$. We first use \cite[Exercise 4.4.3]{vershynin2018high} to rewrite
\[
\N{\hat \Vmu}_2 = \sup_{\vbs \in \bbS^{D-1}} \vbs^\top \hat \Vmu = \sup_{\vbs \in \bbS^{D-1}} N^{-1}\sum_{i=1}^{N}\vbs^\top X_i
 \leq 2 \sup_{\vbs \in \CN} N^{-1}\sum_{i=1}^{N}\vbs^\top X_i.
\]
The term $N^{-1}\sum_{i=1}^{N}\vbs^\top X_i$ is a sum of either sub-Gaussian or sub-exponential random variables, for any $\vbs\! \in\! \CN$.
In the former case we have  $\textN{\vbs^\top X_i}_{\psi_{1}}\!\leq\!\textN{X}_{\psi_{1}}$, and $\textN{\vbs^\top X_i}_{\psi_{2}}\!\leq\!\textN{X}_{\psi_{2}}$ in the latter.
Hoeffding's inequality \cite[Theorem 2.6.2]{vershynin2018high}, in the sub-Gaussian case, or Bernstein's inequality
\cite[Theorem 2.8.1]{vershynin2018high}, in the sub-exponential case, now yield
concentration bounds for the sums.
The claim follows by applying the union bound over $\vbs\! \in\! \CN$, where the number of events is bounded by $\SN{\CN} \leq 12^D\!$,
see for instance \cite[Corollary 4.2.13]{vershynin2018high}.
\end{proof}

\subsection{Proofs for Section 2}
\label{subsec:proofs_sec_2}
\begin{proof}[Proof of Lemma \ref{lem:directed_covariance_estimation}]
Let $\centerRV{X} = X - \bbE X$.
Let $\Umat_{\Amat}\in\bbR^{d_A\times D}$ and $\Umat_{\Bmat}\in\bbR^{d_B\times D}$ be matrices whose rows contain the orthonormal basis for $\Im(\Amat\VSigma)$ and $\Im(\Bmat\VSigma)$, respectively.
Since $\VSigma$ and $\hat\VSigma$ are symmetric, and $\Im(\hat\VSigma)\subseteq \Im(\VSigma)$, we have $\hat\VSigma-\VSigma=\PrSigma(\hat\VSigma-\VSigma)\PrSigma$, where $\PrSigma$ is the orthogonal projection onto $\Im(\VSigma)$.
We thus have
\[ \textN{\Amat(\hat\VSigma-\VSigma)\Bmat^\top}_2 = \textN{\Amod(\hat\VSigma-\VSigma)\Bmod^\top}_2,\]
for $\Amod=\Umat_{\Amat}\Amat\PrSigma\in\bbR^{d_A\times D}$ and $\Bmod=\Umat_{\Bmat}\Bmat\PrSigma\in\bbR^{d_B\times D}$.
Denote now $\tilde \VSigma = N^{-1}\sum_{i=1}^{N}\centerRV{X_i} \centerRV{X_i}^\top$. Notice that $\tilde \VSigma$, compared to the empirical covariance $\hat \VSigma$, uses the true, instead of the empirical mean of $\centerRV{X}$.
We then have
$$
\textN{\Amod(\hat \VSigma - \VSigma)\Bmod^\top}_2 \leq
\textN{\Amod(\tilde \VSigma - \VSigma)\Bmod^\top}_2 + \Big\|{\sum_{i=1}^{N}\frac{\Amod\centerRV{X_i}}{N}}\Big\|_2\Big\|{\sum_{i=1}^{N}\frac{\Bmod\centerRV{X_i}}{N}}\Big\|_2.
$$
By Lemma \ref{lem:subExp_mean_estimation_improved} the second term
is always of higher order, and the resulting error can be absorbed into a corresponding upper
bound for the first term. Thus, in the following we focus on $\textN{\Amod(\tilde \VSigma - \VSigma)\Bmod^\top}_2$.

\noindent
We closely follow the proof of \cite[Proposition 2.1]{vershynin2012close}.
Let $\delta < 1/4$, and by $\CN$, $\CM$ denote $\delta$-nets of
 $\bbS^{d_A-1}$ and $\bbS^{d_B-1}$. From \cite[Exercise 4.4.3]{vershynin2018high} we have
\begin{align}
\label{eq:net_approximation}
\|{\Amod(\tilde \VSigma - \VSigma) \Bmod^\top}\|_2 &\leq (1-2\delta)^{-1} \sup\limits_{\substack{\xbs \in \CN\\ \ybs \in \CM}}\big\langle \Amod(\tilde \VSigma - \VSigma) \Bmod^\top \xbs, \ybs\big\rangle \\&\leq
2\sup\limits_{\substack{\xbs \in \CN\\ \ybs \in \CM}}\big\langle (\tilde \VSigma - \VSigma)  \Bmod^\top \xbs, \Amod^\top \ybs\big\rangle.
\end{align}
Consider now any pair $(\xbs,\ybs)\in\CN\times\CM$, and write
\begin{align*}
\big\langle{\tilde \VSigma \Bmod^\top \xbs},{\Amod^\top \ybs}\big\rangle = \sum_{i=1}^N \frac{\big\langle{\centerRV{X}_i (\centerRV{X}_i^\top\Bmod^\top \xbs)},{\Amod^\top \ybs}\big\rangle}{N} =
\sum\limits_{i=1}^{N}\frac{\big\langle{\Amod\centerRV{X_i}},{\ybs}\big\rangle \big\langle{\Bmod\centerRV{X_i}},{\xbs}\big\rangle}{N}.
\end{align*}
Since $\langle\Amod\centerRV{X_i},\ybs\rangle$ and $\langle\Bmod\centerRV{X_i},\xbs\rangle$ are sub-Gaussian, their product is sub-exponential, and from
\cite[Lemma 2.7.7]{vershynin2018high} we have
\begin{align*}
\|{\langle \Amod \centerRV{X_i}, \ybs\rangle \langle \Bmod \centerRV{X_i},\xbs\rangle}\|_{\psi_1}\! &\leq\!
\|{\langle \Amod \centerRV{X_i}, \ybs\rangle}\|_{\psi_2}\|{\langle \Bmod \centerRV{X_i},\xbs\rangle}\|_{\psi_2}\!\leq\! \|{\Amod\tilde X}\|_{\psi_2} \|{\Bmod\tilde X}\|_{\psi_2},
\end{align*}
which we denote by $\sigma_{\Amat \Bmat}:= \|{\Amod\tilde X}\|_{\psi_2} \|{\Bmod\tilde X}\|_{\psi_2}$ for short.
Since $\bbE[\tilde \VSigma]=\VSigma$, by Lemma \ref{lem:subExp_mean_estimation_improved}
we have for any $u > 0$, with $\msf(t) = t\vee t^2$,
\begin{equation*}
\bbP\left(\big\lvert{\big\langle \tilde \VSigma \Bmod^\top \xbs, \Amod^\top \ybs\big\rangle - \big\langle \VSigma \Bmod^\top \xbs, \Amod^\top \ybs\big\rangle}\big\rvert
\geq \sigma_{\Amat \Bmat} \msf\left(\sqrt{\frac{1+u}{N}}\right)\right) \leq \exp(-u).
\end{equation*}
The size of the nets can be bounded as $\SN{\CN} \leq 12^{d_A}$ and $\SN{\CM} \leq 12^{d_B}$, see \cite[Corollary 4.2.13]{vershynin2018high}.
Thus, considering all pair $(\xbs,\ybs)\in\CN\times\CM$ and using the union bound we get, for some universal constant $C$ (bounded e.g. by $\log(12) + 1$),
\begin{align*}
&\bbP\left({\Big\lvert{\sup\limits_{{\xbs \in \CN,\, \ybs \in \CM}}\big\langle{\tilde \VSigma \Bmod^\top \xbs},{\Amod^\top \ybs}\big\rangle\!-\!\big\langle{\VSigma \Bmod^\top \xbs},{\Amod^\top \ybs}}\big\rangle\Big\rvert \!\geq\! C \sigma_{\Amat \Bmat} \msf\left(\sqrt{\frac{u+(d_A + d_B)}{N}}\right)}\right) \\ &\qquad\qquad\qquad\qquad\qquad\leq  \SN{\CN}\SN{\CM}\exp\LRP{-u - (d_A + d_B)\log(12)}\\
&\qquad\qquad\qquad\qquad\qquad\leq \exp\left((d_A + d_B)\log(12)-u-(d_A + d_B)\log(12)\right).
\end{align*}

Using the $\delta$-net approximation bound \eqref{eq:net_approximation} we thus get
\begin{align*}
&\bbP\left(\|{\Amod(\tilde \VSigma - \VSigma) \Bmod^\top}\|_2\geq 2C \sigma_{\Amat \Bmat} \msf\left(\sqrt{\frac{u+(d_A + d_B)}{N}}\right)\right) \\ &\leq
\!\bbP\!\left({\Big\lvert{\sup\limits_{\substack{\xbs \in \CN\\ \ybs \in \CM}}\!\!\big\langle{\tilde \VSigma \Bmod^\top \xbs},{\Amod^\top \ybs}\big\rangle \!-\! \big\langle{\VSigma \Bmod^\top \xbs},{\Amod^\top \ybs}\big\rangle}\Big\rvert \! \geq\! C \sigma_{\Amat \Bmat} \msf\left(\sqrt{\frac{u+(d_A + d_B)}{N}}\right)}\right).
\end{align*}

The result now follows from $\textN{\Amod\centerRV{X}}_{\psi_2}=\textN{\Amat\centerRV{X}}_{\psi_2}$ and $\textN{\Bmod\centerRV{X}}_{\psi_2}=\textN{\Bmat\centerRV{X}}_{\psi_2}$
and thus $\sigma_{\Amat \Bmat} = \textN{\Amat\centerRV{X}}_{\psi_2}\textN{\Bmat\centerRV{X}}_{\psi_2}$.
\end{proof}

\begin{proof}[Proof of Lemma \ref{lem:concentration_sample_eigengap}]
We use the shorthand notation $\lambda_i := \lambda_{i}(\VSigma)$ and $\hat \lambda_i := \hat \lambda_{i}(\VSigma)$.
By the definition of $\delta_{il}$ we have
\begin{align*}
\delta_{il} &= \textSN{\hat \lambda_{i-1} - \lambda_{i}} \wedge \textSN{\lambda_{l} - \hat \lambda_{l+1}} \geq (\hat \lambda_{i-1} - \lambda_{i}) \wedge (\lambda_{l} - \hat \lambda_{l+1})\\
&\geq \left(\left(\lambda_{i-1}-\lambda_{i}\right) + (\hat \lambda_{i-1} - \lambda_{i-1})\right) \wedge \left(\left(\lambda_{l}-\lambda_{l+1}\right) + (\lambda_{l+1} - \hat \lambda_{l+1})\right),
\end{align*}
so it suffices to show $\hat \lambda_{i-1} - \lambda_{i-1}\geq -(\lambda_{i-1}-\lambda_{i})/2$
and $\lambda_{l+1} - \hat \lambda_{l+1} \geq - (\lambda_{l}-\lambda_{l+1})/2$.
Note that if $i=1$ or $l = \rank(\VSigma)$, the corresponding gap is trivial and
no concentration is required. Hence, we can focus on the case $i > 1$ and $l < \rank(\VSigma)$.
Using \cite[Proposition 3.13]{reiss2020nonasymptotic}, we obtain that for any $u>0$ a relative eigenvalue bound
\begin{align}
\label{eqn:aux_eigengap_1}
\hat\lambda_{i-1} - \lambda_{i-1} < -\frac{\lambda_{i-1}-\lambda_{i}}{2},
\end{align}
holds with probability at most $\exp(-u)$, provided
\begin{align}
\label{eqn:aux_N_eigengap}
N > C_K\left(\frac{\lambda_{i-1}}{\lambda_{i-1}-\lambda_{i}} \vee 1\right)\left(\sum_{j=1}^{i-1}\frac{\lambda_j}{\lambda_j - \lambda_{i-1} + \frac{\lambda_{i-1}-\lambda_{i}}{2}} \vee u\frac{\lambda_{i-1}}{\lambda_{i-1}-\lambda_{i}}\right).
\end{align}
Furthermore, since $\lambda_{i-1}-(\lambda_{i-1}-\lambda_{i})/2 < \lambda_j$ for all $j \geq i-1$,
the summands in the sum in \eqref{eqn:aux_N_eigengap} are increasing, and we thus get
\begin{align*}
\sum_{j=1}^{i-1}\frac{\lambda_j}{\lambda_j - \lambda_{i-1} + \frac{\lambda_{i-1}-\lambda_{i}}{2}} \leq (i-1)\frac{2\lambda_{i-1}}{\lambda_{i-1}-\lambda_{i}} \lesssim \rank(\VSigma) \frac{\lambda_{i-1}}{\lambda_{i-1}-\lambda_{i}}.
\end{align*}
Thus, \eqref{eqn:aux_N_eigengap} is implied by the requirement on $N$ in the statement. Similarly, using \cite[Proposition 3.10]{reiss2020nonasymptotic} we get that
\begin{align}
\label{eqn:aux_eigengap_2}
\hat\lambda_{l+1} - \lambda_{l+1} > \frac{\lambda_{l}-\lambda_{l+1}}{2}
\end{align}
holds with probability at most $\exp(-u)$, provided
\begin{align}
\label{eqn:aux_N_eigengap_2}
N > C_K\left(\frac{\lambda_{l+1}}{\lambda_{l}-\lambda_{l+1}} \vee 1\right)\left(\sum_{j=l+1}^{D}\frac{\lambda_j}{\lambda_{l+1} - \lambda_{j} + \frac{\lambda_{l}-\lambda_{l+1}}{2}} \vee u\frac{\lambda_{l+1}}{\lambda_{l}-\lambda_{l+1}}\right).
\end{align}
As before, we have $\lambda_j - (\lambda_{l}-\lambda_{l+1})/2 < \lambda_{l+1}$ for all $j \geq l+1$,
so that
\begin{align*}
\sum_{j=l+1}^{D}\frac{\lambda_j}{\lambda_{l+1} - \lambda_{j} + \frac{\lambda_{l}-\lambda_{l+1}}{2}} \leq (\rank(\VSigma)-l)\frac{2\lambda_{\ell+1}}{\lambda_{l}-\lambda_{l+1}} \lesssim \rank(\VSigma) \frac{\lambda_{l+1}}{\lambda_{l}-\lambda_{l+1}}.
\end{align*}
It follows that \eqref{eqn:aux_N_eigengap_2} is implied by the requirement on $N$ in the statement.
Taking the complementary event of the union of events \eqref{eqn:aux_eigengap_1}
and \eqref{eqn:aux_eigengap_2}, which holds with probability $1-2\exp(-u)$,
we get $\hat \lambda_{i-1} - \lambda_{i-1}\geq -(\lambda_{i-1}-\lambda_{i})/2$
and $\lambda_{l+1} - \hat \lambda_{l+1} \geq - (\lambda_{l}-\lambda_{l+1})/2$
and thus the result is proven.
\end{proof}

\subsection{Proofs for Section \ref{sec:precision_matrix_estimation}}
\label{subsec:proofs_sec_3}
\begin{lemma}
\label{lem:image_equality_sample}
If
$
\textN{\sqrt{\VSigma^{\dagger}} (\VSigma-\hat \VSigma) \sqrt{\VSigma^{\dagger}}}_2 < 1,
$
then $\ker(\hat \VSigma) \cap \Im(\VSigma) = \{0\}$.
\end{lemma}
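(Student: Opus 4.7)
The plan is to use the hypothesis to show that the symmetric matrix
$\Mmat := \sqrt{\VSigma^\dagger}\hat\VSigma\sqrt{\VSigma^\dagger}$
is strictly positive on $\Im(\VSigma)$, and then transfer this positivity to $\hat\VSigma$ itself via the substitution $y = \sqrt{\VSigma}x$, which maps $\Im(\VSigma)$ bijectively to itself.

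The first step is purely algebraic. The Moore--Penrose identities $\sqrt{\VSigma^\dagger}\sqrt{\VSigma} = \sqrt{\VSigma}\sqrt{\VSigma^\dagger} = \Pmat_{\Im(\VSigma)}$ immediately yield $\sqrt{\VSigma^\dagger}\VSigma\sqrt{\VSigma^\dagger} = \Pmat_{\Im(\VSigma)}$, so
\[
\Mmat \;=\; \Pmat_{\Im(\VSigma)} \;-\; \sqrt{\VSigma^\dagger}(\VSigma - \hat\VSigma)\sqrt{\VSigma^\dagger}.
\]
For any unit vector $y \in \Im(\VSigma)$, the hypothesis gives $|y^\top \sqrt{\VSigma^\dagger}(\VSigma - \hat\VSigma)\sqrt{\VSigma^\dagger} y| < 1$, hence $y^\top \Mmat y > 0$. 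Thus $\Mmat$ is positive definite on $\Im(\VSigma)$.

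The second step is a short contradiction argument. Suppose that some nonzero $x \in \ker(\hat\VSigma) \cap \Im(\VSigma)$ exists, and set $y := \sqrt{\VSigma} x$. Because $\sqrt{\VSigma}$ is injective on $\Im(\VSigma)$, we have $y \in \Im(\VSigma)\setminus\{0\}$. Moreover $\sqrt{\VSigma^\dagger} y = \sqrt{\VSigma^\dagger}\sqrt{\VSigma}\,x = \Pmat_{\Im(\VSigma)}x = x$, so
\[
y^\top \Mmat y \;=\; (\sqrt{\VSigma^\dagger}y)^\top \hat\VSigma (\sqrt{\VSigma^\dagger}y) \;=\; x^\top \hat\VSigma x \;=\; 0,
\]
which contradicts the strict positivity established in the first step.

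I do not anticipate any serious obstacle here; the only care needed is to apply the identity $\sqrt{\VSigma^\dagger}\sqrt{\VSigma} = \Pmat_{\Im(\VSigma)}$ only to vectors already in $\Im(\VSigma)$, so that it acts as the identity rather than as a nontrivial projection.
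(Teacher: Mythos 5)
Your proof is correct and is essentially the same argument as the paper's: the paper picks a nonzero $\vbs\in\ker(\hat\VSigma)\cap\Im(\VSigma)$, writes it as $\sqrt{\VSigma^\dagger}\ubs$, and derives $1=\vbs^\top\VSigma\vbs=\ubs^\top\sqrt{\VSigma^\dagger}(\VSigma-\hat\VSigma)\sqrt{\VSigma^\dagger}\ubs<1$, which is exactly your computation with $\ubs$ playing the role of your $y=\sqrt{\VSigma}x$. The only cosmetic difference is that you factor the argument through the auxiliary matrix $\Mmat$ and state its positive definiteness on $\Im(\VSigma)$ as an intermediate step, whereas the paper compresses the whole thing into a single chain of (in)equalities.
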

\begin{proof}
We prove the statement by contraction.
Assume there exists a nonzero $\vbs\in \ker(\hat \VSigma) \cap \Im(\VSigma)$. Since $\vbs \in \Im(\VSigma)$ we have $\vbs \in \Im(\sqrt{\VSigma^{\dagger}})$ implying the existence of a $\ubs\in\bbS^{D-1}$ with $\vbs = \sqrt{\VSigma^{\dagger}}\ubs$ (possibly after scaling $\vbs$).  Using the properties of $\ubs$ and $\vbs$, we get a contradiction by
\begin{align*}
1 = \vbs^\top \VSigma \vbs = \vbs^\top  (\VSigma\!-\!\hat \VSigma)  \vbs = \ubs^\top\sqrt{\VSigma^{\dagger}}  (\VSigma-\hat \VSigma)  \sqrt{\VSigma^{\dagger}} \ubs \leq \big\|{\sqrt{\VSigma^{\dagger}}  (\VSigma-\hat \VSigma)  \sqrt{\VSigma^{\dagger}}}\big\|_2\!<\!1.
\end{align*}
\end{proof}

\begin{proof}[Proof of Theorem \ref{thm:condition_free_directional_precision_matrix_estimation_arbitrary}]
Assume for now $\textN{\sqrt{\VSigma^{\dagger}} (\VSigma-\hat \VSigma) \sqrt{\VSigma^{\dagger}}}_2 < \frac{1}{2}$ which will be
ensured later by concentration arguments. Conditioned on this event, Lemma \ref{lem:image_equality_sample} implies $\ker(\hat \VSigma) \cap \Im(\VSigma) = \{0\}$. Since $\Im(\hat \VSigma) \subset \Im(\VSigma)$ holds almost surely, this implies $\Im(\VSigma) = \Im(\hat \VSigma)$ almost surely.
Denote now $\VDelta := \hat \VSigma - \VSigma$.
We want to use a von Neumann series argument to express $\hat \VSigma^{\dagger}$ in terms
of $\VSigma^{\dagger}$ and the corresponding finite sample perturbation. Recall that for general symmetric matrices $\Umat$ and $\Vmat$ with  $\Im(\Umat) = \Im(\Vmat)$, we have  $(\Umat\Vmat)^{\dagger} = \Vmat^{\dagger}\Umat^{\dagger}$  \cite[Corollary 1.2]{tian2004some}. Letting
$\PrSigma$ be the orthogonal projection onto $\Im(\VSigma)$, and $\QrSigma=\Id_D-\PrSigma$, we can thus write
\begin{align*}
\hat\VSigma^{\dagger} &= (\VSigma + \Delta)^{\dagger} = \left(\VSigma^{1/2}(\PrSigma + \sqrt{\VSigma^{\dagger}}\Delta\sqrt{\VSigma^{\dagger}})\VSigma^{1/2}\right)^{\dagger} \\
&=\sqrt{\VSigma^{\dagger}}(\PrSigma + \sqrt{\VSigma^{\dagger}}\Delta\sqrt{\VSigma^{\dagger}})^{\dagger}\sqrt{\VSigma^{\dagger}}\\
&=\sqrt{\VSigma^{\dagger}}(\Id_D - \QrSigma +  \sqrt{\VSigma^{\dagger}}\Delta\sqrt{\VSigma^{\dagger}})^{\dagger}\sqrt{\VSigma^{\dagger}}\\
&=\sqrt{\VSigma^{\dagger}}(\Id_D +  \sqrt{\VSigma^{\dagger}}\Delta\sqrt{\VSigma^{\dagger}})^{-1}\sqrt{\VSigma^{\dagger}},
\end{align*}
where we used
the additivity of the Moore-Penrose inverse for matrices with orthogonal non-trivial eigenspaces in the last equality. Using a von Neumann series expansion for the second
term, we obtain
\begin{align*}
\hat\VSigma^{\dagger} &= \sqrt{\VSigma^{\dagger}}(\Id_D +  \sqrt{\VSigma^{\dagger}}\Delta\sqrt{\VSigma^{\dagger}})^{\dagger}\sqrt{\VSigma^{\dagger}} =\sqrt{\VSigma^{\dagger}}\sum_{k=0}^{\infty}(-\sqrt{\VSigma^{\dagger}}\Delta\sqrt{\VSigma^{\dagger}})^{k}\sqrt{\VSigma^{\dagger}} \\
&= \VSigma^{\dagger} + \sqrt{\VSigma^{\dagger}}\sum_{k=1}^{\infty}(-\sqrt{\VSigma^{\dagger}}\Delta\sqrt{\VSigma^{\dagger}})^{k}\sqrt{\VSigma^{\dagger}},
\end{align*}
where we used $\sqrt{\VSigma^{\dagger}}\sqrt{\VSigma^{\dagger}} = \VSigma^{\dagger}$ in the last equality.
Subtracting $\VSigma^{\dagger}$, and multiplying by $\Amat$ from the right and $\Bmat^\top$ from the left, it follows that
\begin{align}
\nonumber
&\Amat(\hat \VSigma^{\dagger} - \VSigma^{\dagger})\Bmat^\top = \Amat\sqrt{\VSigma^{\dagger}}\sum_{k=1}^{\infty}(-\sqrt{\VSigma^{\dagger}}\Delta\sqrt{\VSigma^{\dagger}})^{k}\sqrt{\VSigma^{\dagger}}\Bmat^\top\\
\label{eq:aux_identity_A_B}
&=  -\Amat\VSigma^{\dagger}\Delta \VSigma^{\dagger}\Bmat^\top + \Amat\VSigma^{\dagger}\Delta\sqrt{\VSigma^{\dagger}}\sum_{k=0}^{\infty}(-\sqrt{\VSigma^{\dagger}}\Delta\sqrt{\VSigma^{\dagger}})^{k}\sqrt{\VSigma^{\dagger}}\Delta \VSigma^{\dagger}\Bmat^\top.
\end{align}
The rest of the proof requires to apply Lemma \ref{lem:directed_covariance_estimation} multiple times to concentrate $\Delta$ when multiplied by different matrices from left and right. More specifically, since $\textN{\sqrt{\VSigma^{\dagger}}\centerRV{X}}_{\psi_2}^2 \gtrsim \textN{\textCovv{\sqrt{\VSigma^\dagger}X}}_2\gtrsim 1$,
we can assume the regime $N > \rank(\VSigma) + u$ by the assumption on $N$ in Theorem \ref{thm:condition_free_directional_precision_matrix_estimation_arbitrary}.
Then, we have by Lemma \ref{lem:directed_covariance_estimation} with probability $1-4\exp(-u)$
\begin{equation}
\label{eq:neumann_terms_bounded1}
\begin{aligned}
\|{\Amat\VSigma^{\dagger}\VDelta \VSigma^{\dagger} \Bmat^\top}\|_2 &\lesssim \|{\Amat\VSigma^{\dagger}\centerRV{X}}\|_{\psi_2}\|{\Bmat\VSigma^{\dagger}\centerRV{X}}\|_{\psi_2}\sqrt{\frac{\rank(\VSigma)+u}{N}}, \\
\|{\Amat\VSigma^{\dagger}\VDelta \sqrt{\VSigma^{\dagger}}}\|_2 &\lesssim \|{\Amat\VSigma^{\dagger}\centerRV{X}}\|_{\psi_2}\|{\sqrt{\VSigma^{\dagger}}\centerRV{X}}\|_{\psi_2}\sqrt{\frac{\rank(\VSigma)+u}{N}},\\
\|{\sqrt{\VSigma^{\dagger}}\VDelta \VSigma^{\dagger}\Bmat^\top}\|_2 &\lesssim\frac{\|{\Bmat\VSigma^{\dagger}\centerRV{X}}\|_{\psi_2}}{\|{\sqrt{\VSigma^{\dagger}}\centerRV{X}}\|_{\psi_2}}, \text{ and }{\textN{\sqrt{\VSigma^{\dagger}}\VDelta \sqrt{\VSigma^{\dagger}}}} \leq \frac{1}{2},
\end{aligned}
\end{equation}
whenever $N > C(\rank(\VSigma) + u)\textN{\sqrt{\VSigma^{\dagger}}\centerRV{X}}_{\psi_2}^4$ (we used properties of the sub-Gaussian norm in Lemma \ref{lem:subGaussian_properties}
to get $\textN{\sqrt{\VSigma^{\dagger}}\centerRV{X}}_{\psi_2}^2 \geq C\textN{\textrm{Cov}(\sqrt{\VSigma^{\dagger}}X)}_2  = C$)
and thus $\textN{\sqrt{\VSigma^{\dagger}}\centerRV{X}}_{\psi_2}^2\lesssim \textN{\sqrt{\VSigma^{\dagger}}\centerRV{X}}_{\psi_2}^4$).
Using identity \eqref{eq:aux_identity_A_B}, combining the above bounds and adjusting the uniform constant $C$, the desired result holds with probability $1-\exp(-u)$.
\end{proof}

\begin{proof}[Proof of Theorem \ref{thm:bounded_everything}]
For the covariance estimation bound denote first $\tilde \VSigma = N^{-1}\sum_{i=1}^{N}\centerRV{X}_i \centerRV{X}_i^\top$, where $\centerRV{X}_i = X_i - \bbE X$,
and decompose the error, as in the proof of Lemma \ref{lem:directed_covariance_estimation} into
\begin{align*}
\|{\Amat(\hat \VSigma - \VSigma)\Bmat^\top}\|_F&= \|{\Amod(\hat \VSigma - \VSigma)\Bmod^\top}\|_F \\&\leq \|{\Amod(\tilde \VSigma - \VSigma)\Bmod^\top}\|_F + \Big\|{\frac{1}{N}\sum_{i=1}^{N}\Amod\centerRV{X}_i}\Big\|_F\Big\|{\frac{1}{N}\sum_{i=1}^{N}\Bmod\centerRV{X}_i}\Big\|_F.
\end{align*}
The second term is again of higher order, with high probability, and can thus be disregarded. For the first term, define random matrices $\Vxi_i=\Amod\centerRV{X}_i\centerRV{X}_i^\top\Bmod^\top-\Amod\VSigma\Bmod^\top$.
First note $\bbE[\Vxi_i]=\boldsymbol{0}$, and
\begin{align*}
\textN{\Vxi_i}_{F} &\leq \textN{\Amod\centerRV{X}_i\centerRV{X}_i^\top\Bmod^\top}_F +\textN{\Amod^\top\VSigma\Bmod^\top}_F\\&=\textN{\Amod\centerRV{X}_i}_2\textN{\Bmod\centerRV{X}_i}_2+\bbE{{\textN{\Amod\centerRV{X}\centerRV{X}^\top\Bmod^\top}_F}}\leq2C_AC_B.
\end{align*}
Moreover, $\Vxi_i$ take values in the Hilbert space $(\bbR^{d_A\times d_B}, \textN{\cdot}_F)$.
Thus, by \cite[Section 2.4]{rosasco2010learningwithintegraloperator}, for every $u>0$ we have
\[ \bigg\| \frac{1}{N}\sum_{i=1}^N \Vxi_i\bigg\|_F\leq \frac{2\sqrt{2}C_AC_Bu}{\sqrt{N}},\]
with probability at least $1-\exp(-u^2)$.
The conclusion follows after adjusting the confidence level $u$ to account for the higher order term and the union bound.

The first steps for establishing the bound for precision matrices are the same as in the proof of Theorem \ref{thm:condition_free_directional_precision_matrix_estimation_arbitrary}.
Indeed, the only differences are in the following lines of inequalities.
Instead of \eqref{eq:neumann_terms_bounded1}, we have
\begin{equation*}
\begin{aligned}
\big\|{\Amat\VSigma^{\dagger}\VDelta \VSigma^{\dagger} \Bmat^\top}\big\|_F &\lesssim C_A^\dagger C_B^\dagger\sqrt{\frac{1+u}{N}}, \text{ and } \big\|{\sqrt{\VSigma^{\dagger}}\VDelta \VSigma^{\dagger}\Bmat^\top}\big\|_F \lesssim C_B^\dagger\sqrt{\Theta}\sqrt{\frac{1+u}{N}};\\
\big\|{\Amat {{{\VSigma^{\dagger}}}}\VDelta\sqrt{\VSigma^{\dagger}}}\big\|_F &\lesssim \frac{C_A^{\dagger}}{\sqrt{\Theta}}, \text{ and }
\big\|{\sqrt{\VSigma^{\dagger}}\VDelta \sqrt{\VSigma^{\dagger}}}\big\|_F\! \leq\! \frac{1}{2},
\end{aligned}
\end{equation*}
 using the just proven covariance bounds.
The claim now follows by using this together with \eqref{eq:aux_identity_A_B}, as in the proof of Theorem \ref{thm:condition_free_directional_precision_matrix_estimation_arbitrary}.
\end{proof}

\subsection{Proofs for Section \ref{sec:application}}
\label{subsec:proofs_sec_applications}
We first need a bound for $\rbs = \Covv{X,Y}$, and a concentration around the finite
sample estimate $\hat \rbs = N^{-1}\sum_{i=1}^{N}(X_i-\hat \Vmu_X)(Y_i - \hat \Vmu_Y)$.
\begin{lemma}
\label{lem:concentration_of_r}
{Let $\Amat\in\bbR^{k\times D}$.} If $X \in \bbR^{D},Y \in \bbR$ are sub-Gaussian, we have
$\textN{\Amat \rbs}_2 \lesssim \textN{\Amat \centerRV{X}}_{\psi_2}\textN{\centerRV{Y}}_{\psi_2}$.
Furthermore, with probability at least $1 - \exp(-u)$ we have
\begin{align*}
\N{\Amat (\rbs - \hat \rbs)}_2 \lesssim   \|{\Amat \centerRV{X}}\|_{\psi_2}\|{\centerRV{Y}}\|_{\psi_2}\LRP{\sqrt{\frac{k+u}{N}}\vee\frac{k+u}{N}}.
\end{align*}
\end{lemma}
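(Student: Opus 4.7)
The plan is to prove the two statements essentially independently, though both ultimately reduce to controlling the sub-exponential norm of the scalar product $(\vbs^\top \Amat\centerRV{X})\,\centerRV{Y}$ for $\vbs\in\bbS^{k-1}$. First, for the deterministic bound on $\N{\Amat\rbs}_2$, I will expand $\vbs^\top \Amat\rbs = \bbE[(\vbs^\top \Amat\centerRV{X})\centerRV{Y}]$ and use Cauchy--Schwarz together with the fact that $\N{Z}_{L^2}\lesssim \N{Z}_{\psi_2}$ (a standard consequence of Lemma~\ref{lem:subGaussian_properties}) to bound $|\vbs^\top \Amat\rbs|\lesssim \N{\vbs^\top \Amat\centerRV{X}}_{\psi_2}\N{\centerRV{Y}}_{\psi_2}\le\N{\Amat\centerRV{X}}_{\psi_2}\N{\centerRV{Y}}_{\psi_2}$. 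Taking the supremum over $\vbs\in\bbS^{k-1}$ gives the claim.

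For the probabilistic bound, I will first rewrite
\[
\hat\rbs = \frac{1}{N}\sum_{i=1}^{N}\centerRV{X}_i\centerRV{Y}_i \;-\; (\hat\Vmu_X-\bbE X)(\hat\Vmu_Y-\bbE Y),
\]
so that
\[
\rbs - \hat\rbs = \Bigl(\bbE[\centerRV{X}\centerRV{Y}]-\frac{1}{N}\sum_{i=1}^{N}\centerRV{X}_i\centerRV{Y}_i\Bigr)\;+\;(\hat\Vmu_X-\bbE X)(\hat\Vmu_Y-\bbE Y).
\]
The key step is to concentrate the first term by applying Lemma~\ref{lem:subExp_mean_estimation_improved}(2) to the centered mean of i.i.d.\ copies of the random vector $\Amat\centerRV{X}\centerRV{Y}\in\bbR^k$. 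To do this I must bound its sub-exponential norm: for $\vbs\in\bbS^{k-1}$, Lemma~\ref{lem:subGaussian_properties}\ref{pro:sube_subg_cs} (or equivalently \cite[Lemma 2.7.7]{vershynin2018high}) gives $\N{(\vbs^\top\Amat\centerRV{X})\centerRV{Y}}_{\psi_1}\lesssim \N{\vbs^\top\Amat\centerRV{X}}_{\psi_2}\N{\centerRV{Y}}_{\psi_2}\le \N{\Amat\centerRV{X}}_{\psi_2}\N{\centerRV{Y}}_{\psi_2}$, so by the definition \eqref{eqn:subG_vect_norm} the sub-exponential norm of the vector $\Amat\centerRV{X}\centerRV{Y}$ is dominated by $\N{\Amat\centerRV{X}}_{\psi_2}\N{\centerRV{Y}}_{\psi_2}$. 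Lemma~\ref{lem:subExp_mean_estimation_improved}(2) applied in dimension $k$ then yields, with probability at least $1-\exp(-u)$,
\[
\Big\|\tfrac{1}{N}\!\sum_{i=1}^N\Amat\centerRV{X}_i\centerRV{Y}_i-\bbE[\Amat\centerRV{X}\centerRV{Y}]\Big\|_2\lesssim \N{\Amat\centerRV{X}}_{\psi_2}\N{\centerRV{Y}}_{\psi_2}\Big(\sqrt{\tfrac{k+u}{N}}\vee \tfrac{k+u}{N}\Big).
\]

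For the residual bilinear remainder, I will control each factor separately using Lemma~\ref{lem:subExp_mean_estimation_improved}(1): with probability at least $1-\exp(-u)$,
\[
\N{\Amat(\hat\Vmu_X-\bbE X)}_2\lesssim \N{\Amat\centerRV{X}}_{\psi_2}\sqrt{\tfrac{k+u}{N}},\qquad |\hat\Vmu_Y-\bbE Y|\lesssim \N{\centerRV{Y}}_{\psi_2}\sqrt{\tfrac{1+u}{N}},
\]
whose product is of order $\N{\Amat\centerRV{X}}_{\psi_2}\N{\centerRV{Y}}_{\psi_2}\cdot(k+u)/N$, which is absorbed by the $\msf$-term from the main contribution. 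Combining the two estimates by a union bound and adjusting $u$ by an absolute constant yields the claim. The only mildly delicate point is ensuring that the dimension parameter in the vector concentration is $k$ and not something larger; since $\Amat\centerRV{X}\centerRV{Y}\in\bbR^k$ and Lemma~\ref{lem:subExp_mean_estimation_improved} uses the ambient dimension of the vector directly, this is immediate.
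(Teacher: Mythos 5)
Your proposal is correct and follows essentially the same route as the paper: both prove the deterministic bound via Cauchy--Schwarz combined with the $\Var{\cdot}\lesssim\N{\cdot}_{\psi_2}^2$ estimate from Lemma~\ref{lem:subGaussian_properties}, and both prove the concentration bound by decomposing $\hat\rbs$ into the empirical mean of $\centerRV{X}_i\centerRV{Y}_i$ minus a bilinear remainder $(\hat\Vmu_X-\bbE X)(\hat\Vmu_Y-\bbE Y)$, controlling the main term via the sub-exponential mean bound of Lemma~\ref{lem:subExp_mean_estimation_improved} after bounding $\N{\Amat\centerRV{X}\centerRV{Y}}_{\psi_1}$ by $\N{\Amat\centerRV{X}}_{\psi_2}\N{\centerRV{Y}}_{\psi_2}$, and absorbing the remainder as a higher-order term.
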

\begin{proof}
By Lemma \ref{lem:subGaussian_properties} we have
$\Var{\vbs^\top \Amat  X} \lesssim \textN{\Amat  \centerRV{X}}_{\psi_2}^2$, and $\Var{Y}\lesssim \textN{\centerRV{Y}}_{\psi_2}^2$, which implies
\begin{align*}
\|{\Amat \rbs}\|_2 &= \|{\Covv{\Amat X,Y}}\|_2 = \sup_{\vbs \in \bbS^{k-1}} \vbs^\top \Covv{\Amat X,Y} =
\sup_{\vbs \in \bbS^{k-1}}  \Var{\vbs^\top \Amat  X,Y}\nonumber\\
&\leq \sup_{\vbs \in \bbS^{k-1}}  \sqrt{\Var{\vbs^\top \Amat  X}}\sqrt{\Var{Y}} \lesssim \|{\Amat \centerRV{X}}\|_{\psi_2}\|{\centerRV{Y}}\|_{\psi_2}.
\end{align*}
Define the random variable $Z_i := \centerRV{X_i}\centerRV{Y_i} - \Covv{X,Y}$ with $\bbE Z_i = 0$.
We write
\begin{align*}
\Amat (\rbs - \hat \rbs) = \frac{1}{N}\sum_{i=1}^{N}\Amat  Z_i - \Big(\frac{1}{N}\sum_{i=1}^{N}\Amat  \centerRV{X_i}\Big)\Big(\frac{1}{N}\sum_{i=1}^{N}\centerRV{Y_i}\Big).
\end{align*}
As in the proof of Lemma \ref{lem:directed_covariance_estimation}, by applying Lemma \ref{lem:subExp_mean_estimation_improved} it follows that the second term is of higher order.
On the other hand, the first term is an empirical mean of a sub-exponential centered variable $\Amat Z_i$,
\begin{align*}
\N{\Amat Z_i}_{\psi_1} &= \|{\Amat\centerRV{X_i}\centerRV{Y_i} - \Covv{\Amat X,Y}}\|_{\psi_1} \lesssim \|{\Amat\centerRV{X_i}\centerRV{Y_i}}\|_{\psi_1} \\&\lesssim \sup_{\vbs \in \bbS^{k-1}} \|{\vbs^\top \Amat\centerRV{X_i}\centerRV{Y_i}}\|_{\psi_1} \lesssim \|{\Amat\centerRV{X_i}}\|_{\psi_2}\|{\centerRV{Y_i}}\|_{\psi_2},
\end{align*}
where we use the centering property of the sub-exponential norm, and the bound for the sub-exponential norm by the
product of sub-Gaussian norms (Lemma \ref{lem:subGaussian_properties}).
Applying now Lemma \ref{lem:subExp_mean_estimation_improved}, the statement follows.
\end{proof}
\begin{proof}[Proof of Lemma \ref{lem:directional_linear_regression}]
We first note that $\textN{\sqrt{\VSigma^{\dagger}}X}_{\psi_2}^2 \gtrsim \textN{\textCovv{\sqrt{\VSigma^{\dagger}}X}}_2 \geq 1$,
so by choosing $C$ in the statement large enough, we assume the regime $N > (\rank(\VSigma) + u)$ in the following.
We begin with a bound for $\textN{\Pmat(\VSigma^{\dagger}\rbs - \hat \VSigma^{\dagger}\hat \rbs)}_2$.
\begin{align*}
&\|{\Pmat(\VSigma^{\dagger}\rbs - \hat \VSigma^{\dagger}\hat \rbs)}\|_2 \leq \|{\Pmat(\VSigma^{\dagger}\!-\! \hat \VSigma^{\dagger})\Pmat \rbs}\|_2\! +\! \|{\Pmat(\VSigma^{\dagger}\!-\! \hat \VSigma^{\dagger})\Qmat \rbs}\|_2\\
&\quad\quad\quad\quad\quad\quad\quad\quad\quad\quad+\! \|{\Pmat\hat \VSigma^{\dagger}\Pmat(\hat \rbs - \rbs)}\|_2
+ \|{\Pmat\hat \VSigma^{\dagger}\Qmat(\hat \rbs - \rbs)}\|_2\\
&\quad\leq \|{\Pmat(\VSigma^{\dagger}\!-\! \hat \VSigma^{\dagger})\Pmat}\|_2\|{\Pmat \rbs}\|_2
+ \|{\Pmat(\VSigma^{\dagger}\!-\! \hat \VSigma^{\dagger})\Qmat}\|_2\|{\Qmat \rbs}\|_2 \\&\qquad\qquad\qquad\qquad\qquad+ \|{\Pmat\hat \VSigma^{\dagger}\Pmat}\|_2\|{\Pmat(\hat \rbs - \rbs)}\|_2\! +\! \|{\Pmat\hat \VSigma^{\dagger}\Qmat}\|_2\|{\Qmat(\hat \rbs - \rbs)}\|_2.
\end{align*}
By Lemma \ref{lem:concentration_of_r} we have $\textN{\Qmat \rbs}_2 \leq \textN{\Qmat \centerRV{X}}_{\psi_2}\textN{\tilde Y}_{\psi_2}$, $\textN{\Pmat \rbs}_2 \leq \textN{\Pmat\centerRV{X}}_{\psi_2}\textN{\tilde Y}_{\psi_2}$.
Furthermore, since $\rbs \in \Im(\VSigma)$, we can rewrite $\textN{\Qmat(\rbs - \hat \rbs)}_2
= \textN{\ONBQSigma(\rbs - \hat \rbs)}_2$ and $\textN{\Pmat(\rbs - \hat \rbs)}_2
= \textN{\ONBPSigma(\rbs - \hat \rbs)}_2$, where the rows of $\ONBQSigma \in \bbR^{d_Q\times D}$ and
 $\ONBPSigma \in \bbR^{d_P\times D}$ contain orthonormal bases for $\Im(\Qmat\VSigma)$ and $\Im(\Pmat\VSigma)$, respectively.
 Using Lemma \ref{lem:concentration_of_r} and $d_Q \vee d_P \leq \rank(\VSigma)$,
we have with probability at least $1 - 2\exp(-u)$
\begin{equation}
\label{eq:aux_1_LR}
\begin{aligned}
\N{\Qmat(\rbs - \hat \rbs)}_2 \lesssim  \|{\Qmat\centerRV{X}}\|_{\psi_2}\|{\tilde Y}\|_{\psi_2} \sqrt{\frac{\rank(\VSigma) + u}{N}}, \\
\N{\Pmat(\rbs - \hat \rbs)}_2 \lesssim  \|{\Pmat\centerRV{X}}\|_{\psi_2}\|{\tilde Y}\|_{\psi_2} \sqrt{\frac{\rank(\VSigma) + u}{N}}.
\end{aligned}
\end{equation}
Furthermore, by Theorem \ref{thm:condition_free_directional_precision_matrix_estimation_arbitrary} whenever
$N > C (\rank(\VSigma)+u)\textN{\sqrt{\VSigma^{\dagger}}\centerRV{X}}_{\psi_2}^4$ we get with probability $1-2\exp(-u)$
for each $\Bmat \in \{\Pmat,\Qmat\}$
\begin{align}
\label{eq:aux_2_LR}
\|{\Pmat(\VSigma^{\dagger}\!-\! \hat \VSigma^{\dagger})\Bmat}\|_2 \lesssim \|{\Pmat\VSigma^{\dagger}\centerRV{X}}\|_{\psi_2}\|{\Bmat\VSigma^{\dagger} \centerRV{X}}\|_{\psi_2}\sqrt{\frac{\rank(\VSigma) + u}{N}}.
\end{align}
Conditioned on all four events we now have
\begin{align*}
\|{\Pmat(\VSigma^{\dagger} - \hat \VSigma^{\dagger})\Pmat}\|_2\|{\Pmat \rbs}\|_2
&\lesssim  \|{\tilde Y}\|_{\psi_2} \|{\Pmat\VSigma^{\dagger}\centerRV{X}}\|_{\psi_2} \sqrt{\kappa(\Pmat,X)}\sqrt{\frac{\rank(\VSigma) + u}{N}},\\
\|{\Pmat(\VSigma^{\dagger} - \hat \VSigma^{\dagger})\Qmat}\|_2\|{\Qmat \rbs}\|_2
&\lesssim  \|{\tilde Y}\|_{\psi_2} \|{\Pmat\VSigma^{\dagger}\centerRV{X}}\|_{\psi_2} \sqrt{\kappa(\Qmat,X)}\sqrt{\frac{\rank(\VSigma) + u}{N}}.
\end{align*}
Moreover, since $N > \rank(\VSigma) + u$ as stated in the beginning, we get
\begin{align*}
\|{\Pmat\hat \VSigma^{\dagger}\Pmat}\|_2\|{\Pmat(\hat \rbs - \rbs)}\|_2 &\leq \big({\|{\Pmat \VSigma^{\dagger}\Pmat}\|_2+ \|{\Pmat(\hat \VSigma^{\dagger}-\VSigma^{\dagger})\Pmat}\|_2}\big)\|{\Pmat(\hat \rbs - \rbs)}\|_2\\&\lesssim   \|{\centerRV{Y}}\|_{\psi_2}\|{\Pmat\VSigma^{\dagger}\centerRV{X}}\|_{\psi_2}  \sqrt{\kappa(\Pmat, X)}\sqrt{\frac{\rank(\VSigma) + u}{N}},
\end{align*}
and
\begin{align*}
\|{\Pmat\hat \VSigma^{\dagger}\Qmat}\|_2\|{\Qmat(\hat \rbs - \rbs)}\|_2 &\leq \big({\|{\Pmat \VSigma^{\dagger}\Qmat}\|_2+ \|{\Pmat(\hat \VSigma^{\dagger}-\VSigma^{\dagger})\Qmat}\|_2}\big)\|{\Qmat(\hat \rbs - \rbs)}\|_2\\&\lesssim \|{\centerRV{Y}}\|_{\psi_2}\|{\Pmat\VSigma^{\dagger}\centerRV{X}}\|_{\psi_2}\sqrt{\kappa(\Qmat, X)} \sqrt{\frac{\rank(\VSigma) + u}{N}},
\end{align*}
where we use property \ref{pro:cross_cov_psi2} in Lemma \ref{lem:subGaussian_properties}, exploiting the identity $\VSigma^\dagger=\VSigma^\dagger\VSigma\VSigma^\dagger$, and using Lemma \ref{lem:sem_pos_def_quadratic_form} in the second line.
Combining the bounds and conditioning on the events above, we have that with probability at least $1-4\exp(-u)$
\[
\|{\Pmat(\VSigma^{\dagger}\rbs - \hat \VSigma^{\dagger}\hat \rbs)}\|_2 \lesssim \| \centerRV{Y}\|_{\psi_2}\|
\Pmat\VSigma^{\dagger}\centerRV{X}\|_{\psi_2} \sqrt{\kappa(\Pmat,X) \vee \kappa(\Qmat,X)}\sqrt{\frac{\rank(\VSigma) + u}{N}},
\]
if $N> C (\rank(\VSigma) + u)(\textN{\sqrt{\VSigma^{\dagger}}\centerRV{X}}_{\psi_2}^4)$.
Adjusting the constant $C$ to account for the change in the probability constant and the requirement on $N$, the claim follows.
The proof for the bound on $\textN{\Qmat\hat \VSigma^{\dagger}\rbs}_2$ follows analogous lines or argument.
\end{proof}

\begin{proof}[Proof of Corollary \ref{cor:directional_normalized_linear_regression}]
Assume for the moment $\hat \bbs^\top \bbs > 0$. In this case $\bbs$ and $\Pmat\hat \bbs$ are co-linear since
$\Pmat\hat \bbs=\langle \dbs, \hat\bbs\rangle \dbs$. Therefore, $\dbs =\frac{\bbs}{\N{\bbs}_2} = \frac{\Pmat \hat \bbs}{\N{\Pmat\hat \bbs}_2} $,
which implies
\begin{align*}
\|{\Pmat(\hat \dbs - \dbs)}\|_2 =
\|{\Pmat\hat \bbs}\|_2 \bigg\lvert{\frac{1}{\|{\hat \bbs}\|_2} - \frac{1}{\|{\Pmat \hat \bbs}\|_2}}\bigg\lvert \leq
\bigg\lvert{\frac{\|{\Pmat\hat \bbs}\|_2 - \|{\hat \bbs}\|_2}{\|{\hat \bbs}\|_2}}\bigg\lvert \leq \frac{\|{\Qmat\hat \bbs}\|_2}{\|{\hat \bbs}\|_2}.
\end{align*}
Since $\Qmat \dbs=\boldsymbol{0}$ we now have
\[
\|{\hat \dbs - \dbs}\|_2 = \sqrt{\|{\Pmat(\hat \dbs - \dbs)}\|_2^2 + \|{\Qmat\hat \dbs}\|_2^2} \leq
\sqrt{2}\frac{\|{\Qmat\hat \bbs}\|_2}{\|{\hat \bbs}\|_2} \leq \sqrt{2}\frac{\|{\Qmat\hat \bbs}\|_2}{\|{\bbs}\|_2 - \|{\Pmat(\hat \bbs - \bbs)}\|_2}.
\]
Therefore, it suffices to ensure $\| \Pmat(\hat \bbs - \bbs)\|_2 < \frac{1}{2}\| \bbs\|_2$, which would also give $\hat \bbs^\top \bbs > 0$.
To that end, Lemma \ref{lem:directional_linear_regression} gives that $\| \Pmat(\hat \bbs - \bbs)\|_2 \leq \frac{1}{2} \| \bbs\|_2$ holds with probability at least $1-\exp(-u)$
provided
\[
N > C(\rank(\VSigma)+u)\bigg(\textN{\sqrt{\VSigma^{\dagger}}\centerRV{X}}_{\psi_2}^4 \vee \frac{\|{\centerRV{Y}}\|_{\psi_2}^2\|{\Pmat\VSigma^{\dagger}\centerRV{X}}\|_{\psi_2}^2\kappa_{\Pmat\Qmat}}{\|{\bbs}\|^2_2}\bigg).
\]
The result follows by bounding $\|\Qmat \hat \bbs\|_2$ through Lemma \ref{lem:directional_linear_regression}.
\end{proof}

\begin{proof}[Proof of Corollary \ref{cor:motivation_slicing_and_averaging}]
We first note \[\textN{\centerRV{Y}}_{\psi_2} \leq \textN{f(\abs^\top X) - \bbE f(\abs^\top X)}_{\psi_2} + \sigma_{\zeta}.\]
Furthermore, we can use a bounded difference inequality for unbounded spaces,
see \cite[Theorem 1]{kontorovich2014concentration}, to get
$\textN{f(\abs^\top X) - \bbE f(\abs^\top X)}_{\psi_2} \lesssim L\textN{\abs^\top X}_{\psi_2} \lesssim \sqrt{K} L\sigma_{\Pmat}$,
where we used the strict sub-Gaussianity in \eqref{eqn:norm_var_proxy_new} and $\abs^\top \VSigma \abs = \sigma_{\Pmat}^2$.
Therefore,
\begin{align*}
\|{\centerRV{Y}}\|_{\psi_2} &\lesssim KL\sigma_\Pmat + \sigma_{\zeta}.
\end{align*}
Using $\kappa(\Pmat,X) \lesssim K^2$, $\textN{\sqrt{\VSigma^{\dagger}}\centerRV{X}}_{\psi_2} = \sqrt{K}$, and plugging in the bounds on $\textN{\centerRV{Y}}_{\psi_2}$, as well as
\(\textN{\Pmat\VSigma^{\dagger}\centerRV{X}}_{\psi_2} = \frac{\sqrt{K}}{\sigma_\Pmat}\) and \(\textN{\Qmat\VSigma^{\dagger}\centerRV{X}}_{\psi_2} = \sqrt{K\textN{\Qmat\VSigma^{\dagger}\Qmat}}_2\),
the claim follows.
\end{proof}

\begin{lemma}\label{lem:subG_conditional}
If $Z$ is sub-Gaussian and $E$ an event with $\bbP(E)>0$, then $Z\vert E$ is also sub-Gaussian
\end{lemma}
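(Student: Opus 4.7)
The plan is to use the tail-bound characterization of sub-Gaussianity (rather than the Orlicz-norm definition directly) and then translate the conditional tail into a sub-Gaussian bound with a modified constant that is uniform over directions. Recall that a random variable $X$ is sub-Gaussian with $\|X\|_{\psi_2}\leq K$ if and only if there is a universal constant $c>0$ such that $\bbP(|X|>t)\leq 2\exp(-ct^2/K^2)$ for all $t\geq 0$ (see \cite[Proposition 2.5.2]{vershynin2018high}).

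For a fixed $\vbs\in\bbS^{D-1}$, the scalar $\vbs^\top Z$ is sub-Gaussian with $\|\vbs^\top Z\|_{\psi_2}\leq \|Z\|_{\psi_2}$. The plan is to bound the conditional tail by the simple inclusion
\[
\bbP(|\vbs^\top Z|>t\mid E)=\frac{\bbP(\{|\vbs^\top Z|>t\}\cap E)}{\bbP(E)}\leq \frac{2}{\bbP(E)}\exp\!\left(-\frac{ct^2}{\|Z\|_{\psi_2}^2}\right).
\]
Then one shows that this is itself a sub-Gaussian tail with a new constant $K'$ depending only on $K=\|Z\|_{\psi_2}$ and $\bbP(E)$. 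Concretely, for $t^2\geq \tau:=K^2\log(1/\bbP(E))/c$ the right-hand side is $\leq 2\exp(-ct^2/(2K^2))$, so we can take $K'^2\asymp K^2$ in that regime. For $t^2<\tau$ the trivial inequality $\bbP(\cdot\mid E)\leq 1\leq 2\exp(-ct^2/K'^2)$ holds provided $K'^2\geq c\tau/\log 2$. Combining the two regimes, a choice $K'^2 = C K^2\bigl(1+\log(1/\bbP(E))\bigr)$ works for a universal $C$, and by the equivalence this gives $\|\vbs^\top Z\mid E\|_{\psi_2}\lesssim K\sqrt{1+\log(1/\bbP(E))}$.

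The crucial feature is that this bound is uniform in $\vbs\in\bbS^{D-1}$ because $K=\|Z\|_{\psi_2}$ and $\bbP(E)$ do not depend on $\vbs$. Taking the supremum over the sphere in \eqref{eqn:subG_vect_norm} yields $\|Z\mid E\|_{\psi_2}<\infty$, proving the claim.

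The only subtlety, and the reason the proof goes through tail bounds, is that a direct attack via the Orlicz definition fails: since $\exp(|\vbs^\top Z|^2/s^2)\geq 1$ almost surely, the naive bound $\bbE[\exp(|\vbs^\top Z|^2/s^2)\mid E]\leq \bbE[\exp(|\vbs^\top Z|^2/s^2)]/\bbP(E)$ cannot be driven below $2$ when $\bbP(E)<1/2$, no matter how large $s$ is chosen. Routing through tail probabilities avoids this issue entirely, and the only bookkeeping is the balancing of the two regimes of $t$ described above.
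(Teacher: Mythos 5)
Your proof is correct, but it routes through a different characterization of sub-Gaussianity than the paper does. The paper's proof uses the moment characterization: by the law of total expectation, $\bbE[|Z|^p\mid E]\leq \bbE[|Z|^p]/\bbP(E)$, and taking $p$-th roots together with $\bbP(E)^{1/p}\geq\bbP(E)$ gives $(\bbE[|Z|^p\mid E])^{1/p}\lesssim \|Z\|_{\psi_2}\sqrt{p}/\bbP(E)$, which by the moment criterion yields $\|Z\mid E\|_{\psi_2}\lesssim \|Z\|_{\psi_2}/\bbP(E)$; this is a two-line argument. You instead bound the conditional tail by $\bbP(|\vbs^\top Z|>t\mid E)\leq 2\exp(-ct^2/K^2)/\bbP(E)$ and then split into two regimes of $t$ to absorb the $1/\bbP(E)$ factor into the exponent. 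Your bookkeeping is sound (choosing $\tau\asymp K^2\log(1/\bbP(E))/c$, the large-$t$ regime costs a constant factor on the exponent and the small-$t$ regime is handled by the trivial bound). The payoff is a quantitatively sharper constant: $\|Z\mid E\|_{\psi_2}\lesssim \|Z\|_{\psi_2}\sqrt{1+\log(1/\bbP(E))}$ rather than $\|Z\|_{\psi_2}/\bbP(E)$. That improvement is not needed for the lemma as stated, but it would matter if one wanted to track the dependence of $K_{J,\ell}$ on the level-set mass in Theorem \ref{thm:conditional_sim_estimator}. One small clarification on your final remark: it is not that a moment/Orlicz-side attack fails in general --- the paper's moment argument is precisely such an attack and succeeds --- but rather that the single naive bound $\bbE[\exp(|Z|^2/s^2)\mid E]\leq \bbE[\exp(|Z|^2/s^2)]/\bbP(E)$ is uninformative once $\bbP(E)<1/2$, as you correctly observe.
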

\begin{proof}
Assume without loss of generality that $Z\in\bbR$. The argument for vectors then follows by the definition.
We use the characterization of sub-Gaussianity by the moment bound \cite[Proposition 2.5.2, b)]{vershynin2018high}.
Let $p\geq 1$. By the law of total expectation it follows
\[
\bbE[\lvert Z\rvert^p] = \bbE[\lvert Z\rvert^p\vert E]\bbP(E) +\bbE[\lvert Z\rvert^p\vert E^{\mathsf{c}}]\bbP(E^{\mathsf{c}})\geq \bbE[\lvert Z\rvert^p\vert E]\bbP(E).
 \]
 Dividing by $\bbP(E)$ and using the monotonicity of the $p$-th root yields
 \[ (\bbE[\lvert Z\rvert^p\vert E])^{1/p}\leq \frac{(\bbE[\lvert Z\lvert^p])^{1/p}}{\bbP(E)^{1/p}}\lesssim \frac{\N{Z}_{\psi_2}\sqrt{p}}{\bbP(E)},
 \]
where we used $\bbP(E)\leq 1$ and the sub-Gaussianity of $Z$ in the last inequality.
\end{proof}

\begin{proof}[Proof of Theorem \ref{thm:conditional_sim_estimator}]
Using $0 \leq \max_{s = \pm 1}\langle s \ubs_1(\hat \Mmat_J), \abs\rangle \leq 1$, we first compute
\begin{align*}
\min_{s =  \pm 1 }\|{s\ubs_1(\hat \Mmat_J) - \abs}\|_2^2 &= \min_{s = \pm 1 }2\big(1-\langle s \ubs_1(\hat \Mmat_J), \abs\rangle\big)
\\
&\leq 2\big(1-\langle \ubs_1(\hat \Mmat_J), \abs\rangle^2\big) \leq 2\|{\Qmat \ubs_1(\hat \Mmat_J)}\|_2^2.
\end{align*}
The Davis-Kahan Theorem \cite[Theorem 7.3.1]{bhatia2013matrix} for $\hat \Mmat_J$ and $\Pmat = \abs\abs^\top$ then gives
\begin{align}
\label{eq:aux_11}
\|{\Qmat \ubs_1(\hat \Mmat_J)}\|_2^2\! =\!\|{\Qmat \ubs_1(\hat \Mmat_J)\ubs_1(\hat \Mmat_J)^\top}\|_F^2 \leq \frac{\|{\Qmat(\Pmat - \hat \Mmat_J)}\|_F^2}{\lambda_1(\hat \Mmat_J)^2} =
\frac{\|{\Qmat\hat \Mmat_J}\|_F^2}{\lambda_1(\hat \Mmat_J)^2}.
\end{align}
{It remains to find a concentration bound for the matrix $\Qmat \hat \Mmat_J$ around zero and
a lower bound for the leading eigenvalue $\lambda_1(\hat \Mmat_J)$.}
Now let $\pi : [\SN{I_J}] \rightarrow I_J$ be bijective and introduce the matrix
\[
\hat \Bmat_J = \Big[\sqrt{\hat \rho_{J,\pi(1)}}\hat \bbs_{J,\pi(1)}  |\ldots|\sqrt{\hat \rho_{J,\pi(\SN{I_J})}}\hat \bbs_{J,\pi(\SN{I_J})}\Big] \in \bbR^{D\times \SN{I_J}},
\]
satisfying $\hat \Mmat_J = \hat \Bmat_J  \hat \Bmat_J^\top$, and thus $\lambda_1(\hat \Mmat_J) = \lambda_1(\hat \Bmat_J  \hat \Bmat_J^\top)$.
Using $\textN{\tempMatOne\tempMatTwo}_F \leq \textN{\tempMatOne}_2 \textN{\tempMatTwo}_F$ and $\textN{\tempMatOne}_F=\textN{\tempMatOne^\top}_F$, which hold for arbitrary $\tempMatOne$ and $\tempMatTwo$, yields
\begin{align*}
\|{\Qmat \hat \Mmat_J}\|_F^2 = \|{\Qmat\hat \Bmat_J  \hat \Bmat_J^\top }\|_F^2 \leq \|{\hat \Bmat_J}\|_2^2 \|{\Qmat \hat \Bmat_J}\|_F^2 \leq
\lambda_1(\hat \Mmat_J)\|{\Qmat \hat \Bmat_J}\|_F^2.
\end{align*}
By Lemma \ref{lem:subG_conditional}, conditioning on a sub-Gaussian random vector gives a sub-Gaussian random vector. Thus, by \eqref{eq:lin_reg_bounds_dir_Q} we have
simultaneously for all $\ell \in I_J$
\begin{align}
\label{eq:aux_bound_b}
\|{\Qmat \hat \bbs_{J,\ell}}\|_2^2\lesssim  \|{\centerRV{Y}_{J,\ell}}\|_{\psi_2}^2 \|{\Qmat\VSigma_{J,\ell}^\dagger \centerRV{X}_{J,\ell}}\|_{\psi_2}^2 \kappa_{J,\ell}\frac{\rank(\VSigma_{J,\ell}) +  \log(\SN{I_J}) + u}{N_{J,\ell}},
\end{align}
with probability at least $1 - \exp(-u)$, provided $N_{J,\ell} > C(\rank(\VSigma_{J,\ell}) + \log(\SN{I_J})+u)K_{J,\ell}^4$
for all $\ell \in I_J$.
By definition of $I_J$, we have $N_{J,\ell} > \alpha NJ^{-1}$, and thus the previous condition
is satisfied whenever for all $\ell \in I_J$
\begin{align*}
\frac{N}{J} &> \frac{C(\rank(\VSigma_{J,\ell}) + \log(J) + u)K_{J,\ell}^4}{\alpha},
\end{align*}
which is implied by $N > CJK_J^4(\rank(\VSigma)+\log(J)+u)/\alpha $.
Under the same conditions and with similar probability we obtain
\begin{align*}
\|{\Qmat \hat \Mmat_J}\|_F^2 &\leq \lambda_1(\hat \Mmat_J) \sum_{\ell \in I_J}\hat \rho_{J,\ell} \|{\Qmat \hat \bbs_{J,\ell}}\|_2^2
\\&\lesssim \lambda_1(\hat \Mmat_J) \sum_{\ell \in I_J}\hat \rho_{J,\ell}\|{\centerRV{Y}_{J,\ell}}\|_{\psi_2}^2\|{\Qmat\VSigma_{J,\ell}^\dagger \centerRV{X}_{J,\ell}}\|_{\psi_2}^2 \kappa_{J,\ell}\frac{\rank(\VSigma) + \log(J) + u}{N_{J,\ell}} \\
&\lesssim \frac{\lambda_1(\hat \Mmat_J)(\rank(\VSigma) +\log(J) + u)}{\alpha N J} \sum_{\ell \in I_J}\hat \rho_{J,\ell}\|{\Qmat\VSigma_{J,\ell}^\dagger \centerRV{X}_{J,\ell}}\|_{\psi_2}^2 \kappa_{J,\ell},
\end{align*}
where we used $N_{J,\ell} > \alpha N J^{-1}$ and  $\textN{\centerRV{Y}_{J,\ell}}_{\psi_2} \lesssim  \SN{\CR_{J,\ell}} = J^{-1}$
(definition of $\CR_{J,\ell}$) in the last inequality.
{To bound $\lambda_1(\hat \Mmat_J)$ from below, we note that on the same event where
bound \eqref{eq:aux_bound_b} holds, we also have for all $\ell \in I_J$
\begin{equation}
\begin{aligned}
\label{eq:aux_bound_b_2}
\|{\Pmat (\hat \bbs_{J,\ell} -\bbs_{J,\ell})}\|_2^2 &\lesssim \|{\centerRV{Y}_{J,\ell}}\|_{\psi_2}^2 \|{\Pmat\VSigma_{J,\ell}^\dagger \centerRV{X}_{J,\ell}}\|_{\psi_2}^2 \kappa_{J,\ell}\frac{\rank(\VSigma_{J,\ell}) + \log(J) + u)}{N_{J,\ell}}\\
&\lesssim \frac{\rank(\VSigma) + \log(J) + u}{\alpha N J}\|{\Pmat\VSigma_{J,\ell}^\dagger \centerRV{X}_{J,\ell}}\|_{\psi_2}^2 \kappa_{J,\ell}
\end{aligned}
\end{equation}
Thus, multiplying $\hat \Mmat_J$ from left and right by $\abs$ and using \ref{ass:alignedness}, we obtain
\begin{align*}
&\lambda_1(\hat \Mmat_J)\!\geq\!\abs^\top \hat \Mmat_J\abs\!=\!\sum_{\ell \in I_{J}}\hat \rho_{J,\ell}\langle \abs, \hat b_{J,\ell}\rangle^2
= \sum_{\ell \in I_{J}}\hat \rho_{J,\ell}\left(\N{b_{J,\ell}}_2^2 - \N{\Pmat(b_{J,\ell}-\hat b_{J,\ell})}_2^2\right)\\
&\qquad\geq\sum_{\ell \in I_{J}}\hat \rho_{J,\ell}\left(\N{b_{J,\ell}}_2^2 - \frac{C(\rank(\VSigma) + \log(J) + u)}{\alpha N J}\N{\Pmat\VSigma_{J,\ell}^\dagger \centerRV{X}_{J,\ell}}\|_{\psi_2}^2 \kappa_{J,\ell}\right)
\end{align*}
for some universal constant $C > 0$. The result follows by plugging the bounds on $\|{\Qmat \hat \Mmat_J}\|_F^2$
and $\lambda_1(\hat \Mmat_J)$ into \eqref{eq:aux_11}, and using the definition of $\varepsilon_{N,J,u}$.
}
\end{proof}

\end{document}